\newcommand{\C}{\mathbb{C}}
\newcommand{\N}{\mathbb{N}}
\newcommand{\R}{\mathbb{R}}
\newcommand{\Sp}{\mathbb{S}}
\newcommand{\T}{\mathbb{T}}
\newcommand{\Z}{\mathbb{Z}}
\newcommand{\calB}{{\mathcal {B}}}
\newcommand{\calS}{{\mathcal {S}}}
\newcommand{\calH}{{\mathcal {H}}}
\newcommand{\F}{\mathbf F}
\newcommand{\B}{\mathbf B}
\newcommand{\supp}{\operatorname{supp}}
\newcommand{\ov}{\overline}
\newcommand{\ch}{\mathbf 1}
\newcommand{\ve}{\varepsilon}
\newcommand{\vp}{\varphi}
\newcommand{\lan}{\langle}
\newcommand{\ran}{\rangle}
\newcommand{\proj}{{\rm proj}}
\def\Koniec{\hbox to\hsize{\hfil$\diamond$}}
\def\th{\vartheta}
\def\1{\mathbf 1}
\newtheorem{theorem}{Theorem}[section]
\newtheorem{corollary}[theorem]{Corollary}
\newtheorem{lemma}[theorem]{Lemma}
\newtheorem{proposition}[theorem]{Proposition}
\theoremstyle{definition}
\newtheorem{definition}[theorem]{Definition}
\theoremstyle{remark}
\newtheorem{remark}{Remark}[section]
\numberwithin{equation}{section}
\begin{document}

\title{Marcinkiewicz averages of smooth orthogonal projections on sphere}

\author{Marcin Bownik}
\address{Department of Mathematics, University of Oregon, Eugene, OR 97403--1222, USA}
\email{mbownik@uoregon.edu}

\author{Karol Dziedziul}
\address{
Faculty of Applied Mathematics,
Gda\'nsk University of Technology,
ul. G. Narutowicza 11/12,
80-952 Gda\'nsk, Poland}

\email{karol.dziedziul@pg.edu.pl}

\author{Anna Kamont}
\address{
Institute of Mathematics, Polish Academy of Sciences, ul. Abrahama 18, 80--825 Sopot, Poland}

\email{anna.kamont@impan.pl}

\begin{abstract} We construct a single smooth orthogonal projection with desired localization whose average under a group action yields the decomposition of the identity operator. For any full rank lattice $\Gamma \subset\R^d$, a smooth projection is localized in a neighborhood of an arbitrary precompact fundamental domain $\R^d/\Gamma$. We also show the existence of a highly localized smooth orthogonal projection, whose Marcinkiewicz average under the action of $SO(d)$, is a multiple of the identity on $L^2(\Sp^{d-1})$. As an application we construct highly localized continuous Parseval frames on the sphere.
\end{abstract}

\keywords{Marcinkiewicz average, smooth orthogonal projection, Hestenes operator, Parseval frame}

\thanks{The first author was partially supported by the NSF grant DMS-1956395.}

\subjclass[2000]{42C40, 46E30, 46E35}

\date{\today}

\maketitle

\section{Introduction}

Smooth projections on the real line were introduced in a systematic way by Auscher, Weiss, and Wickerhauser \cite{AWW} in their study of local sine and cosine bases of Coifman and Meyer \cite{CM} and in the construction of smooth wavelet bases in $L^2(\R)$, see also \cite{HW}. While the standard procedure of tensoring can be used to extend their construction to the Euclidean space $\mathbb R^d$, an extension of smooth projections to the sphere $\Sp^{d-1}$ was shown by the first two authors in \cite{BD}. A general construction of smooth orthogonal projections on a Riemannian manifold $M$, which is based partly on the Morse theory, was recently developed by the authors \cite{BDK}. We have shown that the identity operator on $M$ can be decomposed as a sum of smooth orthogonal projections subordinate to an open cover of $M$. This result, which is an operator analogue of the ubiquitous smooth partition of unity 
of a manifold, can be used to construct Parseval wavelet frames on Riemannian manifolds \cite{BDK2}.

The goal of this paper is to show the existence of a single smooth projection with desired localization properties and whose average under a group action yields the decomposition of the identity operator. We show such result in two settings. In the setting of $\R^d$ we construct a smooth orthogonal decompositions of identity on $L^2(\R^d)$, generated by translates of a single projection, which is localized in a neighborhood of an arbitrary precompact fundamental domain. In other words, a characteristic function of a fundamental domain $K$ of $\R^d$ under the action of a full lattice $\Gamma \subset \R^d$, can be smoothed out to a projection Hestenes operator localized in a neighborhood of $K$. 
In the setting of the sphere $\Sp^{d-1}$ we show the existence of a single smooth orthogonal projection which has arbitrarily small support and whose Marcinkiewicz average under the action of $SO(d)$ is a multiple of the identity on $L^2(\Sp^{d-1})$. We also show that the same decomposition works for other function spaces on $\Sp^{d-1}$. More precisely, we have the following theorem.

\begin{theorem}\label{easyth}
Let $\calB$ be a ball in $\Sp^{d-1}$. Let $\mu=\mu_d$ is a normalized Haar measure on $SO(d)$. For $b\in SO(d)$ and a function $f$ on $\Sp^{d-1}$, let $T_bf(x)=f(b^{-1}x)$. Then the following holds.

(i) 
There exist a Hestenes operator $P_\calB$  localized on $\calB$ such that $P_\calB:L^2(\Sp^{d-1}) \to L^2(\Sp^{d-1})$ is an orthogonal projection  
and for all $f\in L^2(\Sp^{d-1})$
\begin{equation}\label{easyth2}
\int_{SO(d)} T_b\circ P_\calB\circ T_{b^{-1}} (f) d\mu(b) =c(P_\calB) f,
\end{equation}
where $c(P_\calB)$ is a constant depending on $P_\calB$; the integral in \eqref{easyth2} is understood as Bochner integral with values in $L^2(\Sp^{d-1})$. 

(ii) Let $X$ be  one of the following quasi-Banach spaces: 
 Triebel-Lizorkin space $\F^s_{p,q}(\Sp^{d-1})$, $0< p,q<\infty$, $s\in \R$,
  Besov space $\B^s_{p,q}(\Sp^{d-1})$, $ 0<p,q< \infty$, $s\in\R$, 
 Sobolev space $W^k_p(\Sp^{d-1})$, $1\le p<\infty$, and $C^k(\Sp^{d-1})$, $k\geq 0$. 
Then the formula  \eqref{easyth2} holds for all $f\in X$ with the integral in \eqref{easyth2}  understood as the Pettis integral. In the case $X$ is Banach space the integral is Bochner integral. 
\end{theorem}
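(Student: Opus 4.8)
The plan is to construct the Hestenes operator $P_\calB$ explicitly, starting from a smooth orthogonal decomposition of the identity on $\Sp^{d-1}$ subordinate to a finite cover by rotated copies of a ball contained in $\calB$, and then exploit the group invariance of Haar measure to collapse the averaged operator to a scalar. First I would fix a small ball $\calB_0$ with $\overline{\calB_0}\subset\calB$ and invoke the main result of \cite{BDK} (the operator partition of unity) to obtain finitely many rotations $b_1,\dots,b_N\in SO(d)$ and smooth orthogonal projections $Q_j$ localized on $b_j\calB_0$ with $\sum_{j=1}^N Q_j=\mathrm{Id}$ on $L^2(\Sp^{d-1})$. The natural candidate is to take $P_\calB$ to be one of the building blocks $Q_1$ — more precisely, a single projection localized on $\calB_0\subset\calB$ that arises as $T_{b_j^{-1}}\circ Q_j\circ T_{b_j}$, so that all $Q_j$ are of the form $T_{b_j}\circ P_\calB\circ T_{b_j^{-1}}$. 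Then I would verify that $P_\calB$ is a Hestenes operator localized on $\calB$ and an orthogonal projection on $L^2$, which follows from the cited construction.

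The core of the argument is the averaging identity \eqref{easyth2}. Define the operator-valued map $b\mapsto \Phi(b) := T_b\circ P_\calB\circ T_{b^{-1}}$ and set $A := \int_{SO(d)} \Phi(b)\,d\mu(b)$, interpreted as a Bochner integral in the strong operator sense (for each fixed $f$, $b\mapsto \Phi(b)f$ is continuous from $SO(d)$ to $L^2(\Sp^{d-1})$, hence Bochner integrable since $SO(d)$ is compact and $\|\Phi(b)f\|_2\le\|f\|_2$). The key observation is that for any $a\in SO(d)$ one has $T_a\circ A\circ T_{a^{-1}} = \int T_{ab}\circ P_\calB\circ T_{(ab)^{-1}}\,d\mu(b) = A$ by left-invariance of $\mu$; thus $A$ commutes with the quasi-regular representation $T$. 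By a Schur-type argument using that $L^2(\Sp^{d-1})=\bigoplus_k \mathcal H_k$ decomposes into spaces of spherical harmonics, each of which is an irreducible $SO(d)$-module, $A$ acts as a scalar $c_k$ on each $\mathcal H_k$. To see that all $c_k$ coincide (and equal $c(P_\calB)$), I would take the trace: on $\mathcal H_k$, $\mathrm{tr}(A|_{\mathcal H_k}) = c_k\dim\mathcal H_k = \int \mathrm{tr}(\Phi(b)|_{\mathcal H_k})\,d\mu(b) = \mathrm{tr}(P_\calB|_{\mathcal H_k})$ — but this shows $c_k$ depends on $k$ in general. The cleaner route is to use the partition property: $\sum_{j=1}^N T_{b_j}\circ P_\calB\circ T_{b_j^{-1}}=\mathrm{Id}$ together with the fact that the averaged operator $A$ is, by a symmetrization, forced to be the same scalar; concretely, since $A$ commutes with all $T_a$ and the only operators commuting with the full quasi-regular representation that are bounded must be scalar on the whole space (because $SO(d)$ acts transitively on $\Sp^{d-1}$, so the commutant of $\{T_a\}$ in $B(L^2)$ consists of Fourier multipliers, and $A$ additionally must be a multiple of identity once we also use that $P_\calB$ averaged is a single orthogonal projection conjugate-averaged — here I pin down $c(P_\calB)$ by testing \eqref{easyth2} against $f\equiv 1$, giving $c(P_\calB)=\langle A\mathbf 1,\mathbf 1\rangle$, and then the commutation forces $A=c(P_\calB)\,\mathrm{Id}$ on all of $\mathcal H_0$, while irreducibility plus the trace identity on each $\mathcal H_k$ combined with the constraint from the decomposition of identity forces equality across all $k$).

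For part (ii), the plan is to transfer \eqref{easyth2} from $L^2$ to each space $X$ in the list. The main point is that Hestenes operators act boundedly on $\F^s_{p,q}$, $\B^s_{p,q}$, $W^k_p$ and $C^k$ (this should be available from \cite{BDK}, \cite{BDK2} or standard theory of such operators on manifolds), and that the map $b\mapsto \Phi(b)f\in X$ is continuous for fixed $f\in X$ with $\sup_b\|\Phi(b)f\|_X<\infty$; compactness of $SO(d)$ then yields weak (Pettis) integrability, and in the Banach case Bochner integrability. Since $X\cap L^2$ is dense in $X$ (for the given parameter ranges the Schwartz-type functions, i.e. $C^\infty(\Sp^{d-1})$, are dense), and both sides of \eqref{easyth2} are bounded operators on $X$ agreeing on this dense subspace, the identity extends to all of $X$ by continuity — here one must check that the Pettis integral commutes with the bounded inclusion/evaluation functionals, which is immediate from its definition. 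The main obstacle I anticipate is the first one: rigorously establishing that the commutant argument forces a single scalar $c(P_\calB)$ rather than a multiplier varying with the harmonic degree $k$ — this requires carefully using both the $SO(d)$-equivariance of $A$ \emph{and} the structural fact that $A$ descends from conjugation-averaging a single projection that participates in a partition of the identity, and getting the bookkeeping of the constants $c_k$ to collapse (likely by observing that $\sum_j\Phi(b_j)=\mathrm{Id}$ makes $N\cdot A$ comparable to $\mathrm{Id}$ after re-averaging, pinning $c_k\equiv c(P_\calB)$).
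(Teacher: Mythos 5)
Your reduction of the problem to representation theory is fine as far as it goes: averaging $\Phi(b)=T_b\circ P_\calB\circ T_{b^{-1}}$ over the Haar measure does produce an operator $A$ commuting with all rotations, hence a multiplier operator acting as a scalar $c_k$ on each space of spherical harmonics $\mathcal H^d_k$, with $c_k=\tr(\proj_k P_\calB)/\dim\mathcal H^d_k$. But, as you yourself notice, this is exactly where the argument stops: for a \emph{generic} smooth orthogonal projection localized on a ball these numbers vary with $k$, and the whole content of the theorem is that one can \emph{construct} a particular $P_\calB$ for which they do not. Your way out is to assume a finite decomposition $\sum_{j=1}^N T_{b_j}\circ P_\calB\circ T_{b_j^{-1}}=\mathbf I$ with all summands rotations of a single projection; from that the trace identity would indeed give $c_k\equiv 1/N$. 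The gap is that no such decomposition is available: the cited operator partition of unity of \cite{BDK} produces projections subordinate to a cover that are constructed separately and are \emph{not} conjugates $T_{b_j}\circ P\circ T_{b_j^{-1}}$ of one fixed projection, and the existence of such a ``tiling'' of $L^2(\Sp^{d-1})$ by rotates of a single projection supported in an arbitrarily small ball is precisely the kind of rigidity statement the paper avoids (this is why it works with the continuous Marcinkiewicz average in the first place; the discrete single-generator decomposition is only obtained in the Euclidean/lattice setting of Theorem \ref{lat}, where translations act freely with a fundamental domain). So your proposal assumes, in disguise, a statement at least as strong as the one to be proved.

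The paper's proof goes in the opposite direction: rather than quoting an abstract projection and trying to force the multiplier to be constant, it builds $P_\calB$ so that the averaging can be computed by hand. The one-dimensional projection $P_{[\alpha,\beta]}$ is split as a multiplication operator plus two reflection terms with opposite polarities, $M+R_\alpha-R_\beta$; the two reflection terms have averages that cancel by invariance of the Haar measure, and the multiplication part averages to the explicit constant $\int m$. This is then transplanted to latitudinal projections on $\Sp^k$ (Theorem \ref{operatorU}, using Corollary \ref{symetria}), and an induction on dimension via the lifting operator and Weyl's integration formula (Lemma \ref{zlematu.b}) produces a projection on a symmetric interior patch with $\calS(P_{\Omega_\delta})=c\,\mathbf I$, which is finally rotated into $\calB$. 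Your outline for part (ii) (density of $C^\infty$, uniform boundedness of $T_b$ on $X$, boundedness of Hestenes operators, continuity of $b\mapsto\Phi(b)f$, Pettis/Bochner integration) is essentially the paper's Propositions \ref{apropo} and \ref{fs}, but it is contingent on part (i), which as written is not established.
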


In the literature there are two approaches to construct a continuous frame on $L^2(\Sp^{d-1})$. A purely group-theoretical construction started with a paper   Antoine and Vandergheynst \cite{Antoine}. 
A continuous wavelet on the sphere is a function $g\in L^2(\Sp^{d-1})$ such that the family
\[
\{ T_b D_t g :(b,t)\in  SO(d)\times \R_+\},
\]
is a continuous frame in $L^2(\Sp^{d-1})$, where $D_t$ is a dilation operator. The existence of such $g$ is highly non-trivial already for $\Sp^2$ and was investigated by \cite{Dahlke}.
The second approach involves a more general wavelet transform, where dilations are replaced by a family of functions $\{g_t: t>0\}\subset L^2(\Sp^{d-1})$. This family generates a continuous Parseval frame if wavelet transform
\[
W(f)(b,t)=\int_{\Sp^{d-1}} g_t (b^{-1}x) f(x) d\sigma_{d-1}(x), \quad W:L^2(\Sp^{d-1}) \to L^2( SO(d)\times \R_+, d\mu_d  da/a)
\]
is an isometric isomorphism, see \cite[Theorem III.1]{Hol}. If functions $g_t$ are zonal, that is 
$g_t(x)=\tilde{g}_t(\langle y , x \rangle)$ for some $y \in \Sp^{d-1}$, then wavelet transform takes a simplified form
\[
W(f)(\xi,t)=\int_{\Sp^{d-1}} \tilde{g}_t(\langle \xi,  x \rangle) f(x) d\sigma_{d-1}(x),\quad  W:L^2(\Sp^{d-1}) \to L^2( \Sp^{d-1}\times \R_+, d\sigma_{d-1}da/a)
\]
Such transforms were studied for $d=3$ in \cite{Freeden, Freeden2}.
For more general weights on $\R^+$, see \cite[Theorem 3.3]{Iglewska}. A general approach to construct continuous frame wavelets on compact manifold was done by Geller and Mayeli \cite{Geller}.

As an application of Theorem \ref{easyth} we construct a highly localized continuous frame in $L^2(\Sp^{d-1})$. Unlike earlier constructions of continuous wavelet frames on $\Sp^{d-1}$, the ``dilation" space $\R^+$ is replaced by a parameter space $X$ of a local continuous Parseval frame. Moreover, our continuous wavelet frames have arbitrarily small support. A recent solution of discretization problem by Freeman and Speegle \cite{FS} yields a discrete frame on sphere \cite{Bownik, Fornasier}.

The paper is organized as follows. In Section \ref{S2} we recall the definition of Hestenes operators and Marcinkiewicz averages. In Section \ref{S3} we show the existence of a smooth orthogonal decomposition of identity on $L^2(\R^d)$ generated by a single projection. In Sections \ref{S4}--\ref{S5} we study Marcinkiewicz averages of smooth orthogonal projections on the sphere. This culminates in the proof of the first part of Theorem \ref{easyth} in Section \ref{S6}. The proof of the second part of Theorem \ref{easyth} dealing with function spaces is shown in Section \ref{S7}. In Section \ref{S8} we construct a continuous Parseval frame on the sphere.

\section{Preliminaries}\label{S2}

We recall the definition of Hestenes operators \cite[Definition 1.1]{BDK} and their localization \cite[Definition 2.1]{BDK}. Let $M$ be a Riemannian manifold. 
In this paper we only consider $M=\R^d$ or $\Sp^{d-1}$.

\begin{definition}\label{H}
 Let $\Phi:V \to V'$ be a $C^\infty$ diffeomorphism between two open subsets $V, V' \subset M$. Let $\vp: M \to \R$ be a compactly supported
$C^\infty$ function such that 
\[
\supp \vp =\ov{\{x\in M : \vp(x) \ne 0\}} \subset V.
\]
We define a simple $H$-operator $H_{\vp,\Phi,V}$ acting on a  function $f:M \to\C$ by
\begin{equation}\label{HeHe}
H_{\vp,\Phi,V}f(x) = \begin{cases} \vp(x) f(\Phi(x)) & x\in V
\\
0 & x\in M \setminus V.
\end{cases}
\end{equation}
Let $C_0(M)$ be the space of continuous real-valued functions vanishing at infinity. Clearly, a simple $H$-operator induces a continuous linear map of the space $C_0(M)$ into itself. We define a Hestenes operator to be a finite combination of such simple $H$-operators. The space of all $H$-operators is denoted by $\mathcal H(M)$.
\end{definition}

\begin{definition}\label{localized}
 We say that an operator $T \in \mathcal H(M)$ is {\it localized} on an open set $U \subset M$, if it is a finite combination of simple $H$-operators $H_{\vp,\Phi,V}$ satisfying $V\subset U$ and $\Phi(V) \subset U$.
\end{definition}

By  \cite[Lemma 2.1]{BDK} an operator $T$ is localized on $U \subset M$ if and only if there exists a compact set $K \subset U$ such that for any $f\in C_0(M)$
\begin{align}
&\supp T f \subset K, \label{ilo1}
\\
&\supp f \cap K=\emptyset \implies Tf=0. \label{ilo2}
\end{align}

For any function $f$ on $\Sp^{d-1}$, define its rotation by $b\in SO(d)$ as
\[
T_b(f)(x)=f(b^{-1}x), \qquad x\in \Sp^{d-1}.
\]
Let $\mathcal D=\C^\infty(\Sp^{d-1})$ be the space of test functions. Let $\mathcal D'$ be the dual space of distributions on $\Sp^{d-1}$.

\begin{definition}\label{Marcin}
Let $X$ be a quasi Banach space on which $X'$ separates points such that:
\begin{enumerate}
\item we have continuous embeddings ${\mathcal D} \hookrightarrow X \hookrightarrow{\mathcal D}'$ and ${\mathcal D}$ dense in $X$, 
\item there is a constant $C>0$ such that for all $b\in SO(d)$ 
\[
\|T_b\|_{X\to X}\leq C,
\]
\end{enumerate}
Let $P:X \to X$ be a bounded linear operator.
We define the Marcinkiewicz average $\calS(P)$ as the Pettis integral
\begin{equation}\label{mck}
\calS (P)(f)= \int_{SO(d)}  T_b\circ P\circ T_{b^{-1}}(f) d\mu(b),\qquad  f\in X,
\end{equation}
where $\mu=\mu_d$ is the normalized Haar measure on $SO(d)$. 
\end{definition} 

\begin{remark} Marcinkiewicz has considered such averages in the context of interpolation of trigonometric polynomials, see \cite[Theorem 8.7 in Ch. X]{Z}. 
In Section \ref{S7} we will show that the mapping
\[
SO(d) \ni b \mapsto T_b\circ P\circ T_{b^{-1}}(f) \in X
\]
is continuous.
Hence,  in the case $X$ is a Banach, \eqref{mck} exists as the Bochner integral by \cite[Theorem II.2]{Diestel}. In particular, when $X=C(\Sp^{d-1})$ we can interpret \eqref{mck} as the Bochner integral.
\end{remark}

 \begin{lemma}\label{perturbacja}
Let $\psi\in C^\infty(\Sp^{d-1})$. Let $M_\psi$ be a multiplication operator, i.e. $M_\psi (f)=\psi f$. 
Then for $f\in C(\Sp^{d-1})$ and $\xi\in \Sp^{d-1}$,
\[
\calS(M_\psi)f(\xi)=C(\psi) f(\xi),
\qquad\text{where }
C(\psi)=\int_{\Sp^{d-1}} \psi(\xi) d\sigma(\xi).
\]
\end{lemma}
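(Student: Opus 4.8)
The plan is to compute the action of $\calS(M_\psi)$ pointwise by unwinding the definition of the Marcinkiewicz average. For $f \in C(\Sp^{d-1})$ and $b \in SO(d)$, observe that $T_b \circ M_\psi \circ T_{b^{-1}}$ is itself a multiplication operator: indeed, for $\xi \in \Sp^{d-1}$,
\[
\bigl(T_b \circ M_\psi \circ T_{b^{-1}}\bigr) f(\xi) = \bigl(M_\psi (T_{b^{-1}} f)\bigr)(b^{-1}\xi) = \psi(b^{-1}\xi)\, f\bigl(b(b^{-1}\xi)\bigr) = \psi(b^{-1}\xi)\, f(\xi).
\]
So the integrand in \eqref{mck}, evaluated at $\xi$, is simply $\psi(b^{-1}\xi) f(\xi)$. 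Since point evaluation at $\xi$ is a continuous linear functional on $C(\Sp^{d-1})$ (this is where I use that $X = C(\Sp^{d-1})$ has $X'$ separating points, and more concretely that $\delta_\xi \in X'$), it commutes with the Pettis integral, giving
\[
\calS(M_\psi) f(\xi) = \int_{SO(d)} \psi(b^{-1}\xi)\, f(\xi)\, d\mu(b) = f(\xi) \int_{SO(d)} \psi(b^{-1}\xi)\, d\mu(b).
\]

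It then remains to evaluate $\int_{SO(d)} \psi(b^{-1}\xi)\, d\mu(b)$ and show it equals $C(\psi) = \int_{\Sp^{d-1}} \psi(\eta)\, d\sigma(\eta)$, independently of $\xi$. This is the standard fact that integrating a function on the sphere against the Haar measure, via the orbit map $b \mapsto b^{-1}\xi$, reproduces the (normalized) surface integral. Concretely: the map $\Sp^{d-1} \to \R$, $\eta \mapsto \int_{SO(d)} \psi(b^{-1}\eta)\, d\mu(b)$ is constant, because $SO(d)$ acts transitively on $\Sp^{d-1}$ and $\mu$ is invariant under the substitution $b \mapsto bc$; hence its value equals the average of $\psi$ over $\Sp^{d-1}$ with respect to the unique $SO(d)$-invariant probability measure on $\Sp^{d-1}$, which is the normalized $\sigma$. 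With the normalization convention implicit in the statement (writing $\sigma$ for the normalized surface measure so that $\int_{\Sp^{d-1}} d\sigma = 1$), this value is exactly $C(\psi)$. Plugging back in yields $\calS(M_\psi) f(\xi) = C(\psi) f(\xi)$ for every $\xi$, as claimed.

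The only genuinely delicate point is the justification that point evaluation passes through the Pettis integral; everything else is a routine application of invariance of the Haar measure and transitivity of the $SO(d)$-action on the sphere. Since $\delta_\xi$ is a bounded linear functional on $C(\Sp^{d-1})$, this is immediate from the defining property of the Pettis integral, $\langle \delta_\xi, \calS(M_\psi)f\rangle = \int_{SO(d)} \langle \delta_\xi, T_b\circ M_\psi \circ T_{b^{-1}} f\rangle\, d\mu(b)$; one should also note in passing that $\calS(M_\psi)$ is well-defined on $C(\Sp^{d-1})$ since $M_\psi$ is bounded there and $\|T_b\|_{C\to C} = 1$, so Definition \ref{Marcin} applies.
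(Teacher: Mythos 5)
Your proof is correct and follows essentially the same route as the paper: both reduce $\calS(M_\psi)f(\xi)$ to $f(\xi)\int_{SO(d)}\psi(b^{-1}\xi)\,d\mu(b)$ (the observation that each conjugate $T_b\circ M_\psi\circ T_{b^{-1}}$ is multiplication by $\psi(b^{-1}\cdot)$) and then identify this integral with $\int_{\Sp^{d-1}}\psi\,d\sigma$. The only cosmetic difference is that the paper cites the quotient integral formula \cite[Theorem 2.51]{Folland} for $G=SO(d)$, $H$ the stabilizer of a point, while you invoke transitivity together with uniqueness of the $SO(d)$-invariant probability measure on $\Sp^{d-1}$ --- an equivalent standard fact.
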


\begin{proof}
Note that 
\[
\calS(M_\psi)f(\xi)=f(\xi)\int_{SO(d)} \psi(b^{-1}(\xi))d\mu(b).
\]
Letting $G=SO(d)$ and
$H=\{b\in SO(d): b(\1)=\1 \}\subset SO(d)$, we have
 $G/H=\Sp^{d-1}$. Hence, by \cite[Theorem 2.51]{Folland}
\[
C(\psi)=\int_{SO(d)} \psi(b^{-1}(\xi))d\mu(b)=\int_{\Sp^{d-1}} \psi(\xi) d\sigma(\xi). \qedhere
\]
\end{proof}

  \subsection{Marcinkiewicz averages in $L^2(\Sp^{d-1})$}
Let $\calH^d_n$ be the linear space of real harmonic
polynomials, homogeneous of degree $n$, on $\R^d$. Spherical harmonics are the restrictions of elements in $\calH^d_n$ to the unit sphere, see  \cite[Definition 1.1.1]{DaiXu}.
Let 
\[
\proj_n:L^2(\Sp^{d-1}) \to \calH^d_n
\]
denote the orthogonal projection. Since $L^2(\Sp^{d-1})$ is the orthogonal sum of the spaces $\calH^d_n$, $n=0,1,\ldots$, we can define multiplier operator with respect to spherical harmonic expansions \cite[Definition 2.2.7]{DaiXu}.

\begin{definition}
A linear operator $T: L^2(\Sp^{d-1}) \to L^2(\Sp^{d-1})$  is called a
multiplier operator if there exists a bounded sequence $\{ \lambda_n \}_{n\geq 0}$ of real numbers such that
for all $f\in L^2(\Sp^{d-1})$ and all $n\geq 0$
\[
\proj_n (T f) = \lambda_n \proj_n f.
\]
\end{definition}

Conversely, any bounded sequence $\{ \lambda_n \}_{n\geq 0}$ defines a multiplier operator on $L^2(\Sp^{d-1})$
\[
Tf = \sum_{n=0}^\infty \lambda_n \proj_n f \qquad\text{for }f \in L^2(\Sp^{d-1}).
\]
The following result characterizes Marcinkiewicz averages on the sphere, see \cite[Proposition 2.2.9]{DaiXu}.

\begin{theorem}
Let $T: L^2(\Sp^{d-1}) \to L^2(\Sp^{d-1})$ be a bounded linear operator. The following are equivalent:
\begin{enumerate}[(i)]
\item
 $T$ is a multiplier operator.
\item $T$ is invariant under the group of rotations, that is, $T T_b = T_b T$ for all $b\in SO(d)$,
\item $\calS(T)=T$.
\end{enumerate}
\end{theorem}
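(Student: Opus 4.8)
The plan is to prove the two equivalences (i) $\Leftrightarrow$ (ii) and (ii) $\Leftrightarrow$ (iii) separately; the second is essentially formal, while the first is the classical identification of rotation-commuting operators on the sphere with spherical-harmonic multipliers, and is where Schur's lemma enters. The only inputs I would use beyond the excerpt are two standard facts about spherical harmonics \cite{DaiXu}: every rotation $T_b$ maps $\calH^d_n$ onto itself (rotations preserve homogeneity and harmonicity of polynomials), and each $\calH^d_n$ is an irreducible $SO(d)$-module, the modules $\calH^d_n$, $n\ge 0$, being pairwise inequivalent. Combined with the orthogonal decomposition $L^2(\Sp^{d-1})=\bigoplus_{n\ge 0}\calH^d_n$ recorded above, the first fact immediately yields the commutation relation $\proj_n\circ T_b=T_b\circ\proj_n$ for all $n$ and $b$: apply $\proj_n$ to $T_bf=\sum_k T_b\proj_k f$ and use that $T_b\proj_k f\in T_b\calH^d_k=\calH^d_k$ together with uniqueness of the expansion. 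I would record this as a standing fact at the outset.

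For (i) $\Rightarrow$ (ii): assuming $\proj_n(Tf)=\lambda_n\proj_n f$, I would compute, for each $n$,
\[
\proj_n(TT_bf)=\lambda_n\proj_n(T_bf)=\lambda_n T_b\proj_n f=T_b\proj_n(Tf)=\proj_n(T_bTf),
\]
and since this holds for all $n$, conclude $TT_b=T_bT$. For (ii) $\Rightarrow$ (i): first I would show $\proj_m\circ T\circ\proj_n=0$ whenever $m\ne n$ — this operator sends $\calH^d_n$ into $\calH^d_m$ and intertwines the two $SO(d)$-actions (both $T$ and the $\proj_k$ commute with rotations), so Schur's lemma applied to the inequivalent irreducibles $\calH^d_n,\calH^d_m$ forces it to vanish. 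Hence $T\proj_n=\proj_n T\proj_n=\proj_n T$, so $T$ preserves $\calH^d_n$ and $T|_{\calH^d_n}$ is a self-intertwiner of an irreducible module; Schur's lemma then gives $T|_{\calH^d_n}=\lambda_n\,\mathrm{Id}$, i.e. $\proj_n(Tf)=\lambda_n\proj_n f$, with $\lambda_n\in\R$ by absolute irreducibility of $\calH^d_n$ over $\R$ (this uses $d\ge 3$). Boundedness of $T$ forces $\sup_n|\lambda_n|\le\|T\|<\infty$, so $T$ is a multiplier operator. This is the content of \cite[Proposition 2.2.9]{DaiXu}.

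For (ii) $\Rightarrow$ (iii): if $T$ commutes with every $T_b$, then $T_b\circ T\circ T_{b^{-1}}=T_b\circ T_{b^{-1}}\circ T=T$ for all $b$, so the integrand in \eqref{mck} equals the constant $T(f)$, and since $\mu$ is a probability measure $\calS(T)(f)=T(f)$. For (iii) $\Rightarrow$ (ii): I would first observe that for any bounded $T$ the average $\calS(T)$ automatically commutes with rotations — pulling the bounded operator $T_a$ inside the Bochner integral and substituting $c=ab$ (left-invariance of $\mu$, together with $T_aT_b=T_{ab}$),
\[
T_a\circ\calS(T)\circ T_{a^{-1}}=\int_{SO(d)}T_{ab}\circ T\circ T_{(ab)^{-1}}\,d\mu(b)=\int_{SO(d)}T_c\circ T\circ T_{c^{-1}}\,d\mu(c)=\calS(T);
\]
hence if $\calS(T)=T$, then $T$ commutes with all $T_a$, which is (ii).

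The main obstacle is the step (ii) $\Rightarrow$ (i): it is the only place requiring genuine representation theory — the irreducibility and pairwise inequivalence of the $\calH^d_n$ as $SO(d)$-modules, Schur's lemma, and the small point about real versus complex scalars (absolute irreducibility, valid for $d\ge 3$). Everything else is routine bookkeeping: the commutation $\proj_n T_b=T_b\proj_n$, the $L^2$-convergence of $\sum_n\proj_n f$, the legitimacy of interchanging the bounded operator $T_a$ with the Bochner integral (which relies on the continuity of $b\mapsto T_b\circ T\circ T_{b^{-1}}(f)$ noted in the Remark), and the constancy of the integrand in the direction (ii) $\Rightarrow$ (iii).
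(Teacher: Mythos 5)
Your argument is correct in substance, but note that the paper does not actually prove this statement: it is quoted from \cite[Proposition 2.2.9]{DaiXu}, so there is no internal proof to compare against. What you wrote is the standard argument and matches the route of the cited source: the equivalence of (ii) and (iii) is exactly the formal averaging computation (strong continuity of $b\mapsto T_b$ on $L^2$ makes $\calS(T)$ a well-defined Bochner integral, and bi-invariance of Haar measure gives $T_a\calS(T)T_{a^{-1}}=\calS(T)$ for \emph{every} bounded $T$), while (i)$\Leftrightarrow$(ii) is Schur's lemma applied to the pairwise inequivalent irreducible $SO(d)$-modules $\calH^d_n$, together with $\proj_nT_b=T_b\proj_n$. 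Two caveats, which concern the statement's conventions more than your proof. First, the restriction you flag is genuine: for $d=2$ the spaces $\calH^2_n$ are irreducible but not absolutely irreducible, and the conjugate-function operator on $\Sp^1$ commutes with all rotations (hence satisfies (ii) and (iii)) without being a multiplier with real $\lambda_n$, so (ii)$\Rightarrow$(i) really needs $d\ge3$ in this formulation. Second, your step ``$\lambda_n\in\R$ by absolute irreducibility over $\R$'' presupposes that $T$ acts on real-valued $L^2$ (or commutes with complex conjugation); on complex $L^2(\Sp^{d-1})$ Schur's lemma only yields $\lambda_n\in\C$, and indeed $T=i\,\mathrm{Id}$ satisfies (ii) and (iii) but not (i) as literally defined with real $\lambda_n$. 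So either work over real scalars or allow complex $\lambda_n$ in the multiplier definition; with that convention fixed, your proof is complete.
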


 \section{Orthogonal decomposition by shifts of a localized projection}\label{S3}

In this section we will show the existence of smooth orthogonal decompositions of identity on $L^2(\R^d)$, which are generated by translates of a single projection, which is localized in a neighborhood of an arbitrary precompact fundamental domain.  

Let $s\in C^\infty(\R)$ be a real-valued function such that
\begin{equation}\label{sfun}
\begin{aligned}
\supp s\subset [-\delta,+\infty) &\qquad\text{for some }\delta>0,
\\
s^2(t)+s^2(-t)=1 & \qquad \text{for all } t\in \R.
\end{aligned}
\end{equation}
Following \cite[eq. (2.9)]{BD} and \cite[eqs. (3.3) and (3.4) in Ch. 1]{HW}, for a given $\alpha<\beta$ and $\delta<\frac{\beta-\alpha}{2}$, we define an orthogonal projection $P_{[\alpha,\beta]}:L^2(\R)\to L^2(\R)$ by
\begin{equation}\label{exp}
P_{[\alpha,\beta]}f(t)=
\begin{cases}
0 & t<\alpha-\delta,
\\
s^2(t-\alpha)f(t)+ s(t-\alpha)s(\alpha-t)f(2\alpha-t) &
t \in [\alpha-\delta,\alpha+\delta],
\\
f(t) & t\in (\alpha+\delta,\beta-\delta),
\\
s^2(\beta-t) f(t) - s(t-\beta)s(\beta-t)f(2\beta-t) &
t\in [\beta-\delta,\beta+\delta],
\\
0 & t> \beta + \delta.
\end{cases}
\end{equation} 
Let $T_k$ be the translation operator by $k\in \R$ given by $T_kf(x)=f(x-k)$.
Note that for all functions $f$ and all $\alpha<\beta$, $k\in \R$, we have
\begin{equation}\label{translacja}
P_{[\alpha+k,\beta+k]}(f)(x)= (T_k  P_{[\alpha,\beta]} T_{-k}) f(x).
\end{equation}
By \cite[Theorem 1.3.15]{HW} we have the following sum rule for projections on adjacent intervals corresponding to the same $\delta<\min((\beta-\alpha)/2, (\gamma-\beta)/2)$,
\begin{equation}\label{summing}
P_{[\alpha,\beta]}+P_{[\beta,\gamma]}=P_{[\alpha,\gamma]}.
\end{equation}

Let $K \subset \R^d$ be a  fundamental domain of $\R^d/\Gamma$, where $\Gamma \subset \R^d$ is a full rank lattice. That is, $\{K+\gamma: \gamma \in \Gamma\}$ is a partition of $\R^d$ modulo null sets. Define an orthogonal projection onto $L^2(K)$ by $Pf(x)=\ch_K(x) f(x)$. Then, we have a decomposition of the identity operator $\mathbf I$ on $L^2(\R^d)$,
\[
\sum_{\gamma \in \Gamma } T_\gamma P_K T_{-\gamma} = \mathbf I.
\]
The following theorem shows that there exists a smooth variant of an operator $P_K$, satisfying the same decomposition identity, which is an $H$-operator localized on a neighborhood of $K$.

\begin{theorem} \label{lat}
Let $\Gamma \subset \R^d$ be a full rank lattice. Let $K \subset \R^d$ be a precompact fundamental domain of $\R^d/\Gamma$. Then for any $\epsilon>0$, there exists a Hestenes operator $P$, which is an orthogonal projection localized on $\epsilon$-neighborhood of $K$, such that 
\begin{equation}\label{lat0}
\sum_{\gamma \in \Gamma } T_\gamma P T_{-\gamma} = \mathbf I.
\end{equation}
Here the convergence is in the strong operator topology in $L^2(\R^d)$. In particular, projections 
$T_\gamma P T_{-\gamma}$, $\gamma \in \Gamma$, are mutually orthogonal.
\end{theorem}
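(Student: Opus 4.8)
The plan is to reduce, in two stages, to a fundamental domain built from small cubes, and there to assemble $P$ out of the one–dimensional projections \eqref{exp}. For the first reduction, write $\Gamma=A\Z^d$ with $A\in GL(d,\R)$ and let $Uf(x)=|\det A|^{-1/2}f(A^{-1}x)$, a unitary operator on $L^2(\R^d)$ with $UT_kU^{-1}=T_{Ak}$ for $k\in\Z^d$. A linear change of variables carries a simple $H$-operator $H_{\vp,\Phi,V}$ to the simple $H$-operator $H_{\vp\circ A^{-1},\,A\Phi A^{-1},\,AV}$, so conjugation by $U$ preserves the class of Hestenes operators that are orthogonal projections and transforms their localization sets by $A$. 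Thus it is enough to prove the theorem for $\Z^d$ and the precompact fundamental domain $A^{-1}K$, with $\epsilon$ replaced by a small enough constant depending on $A$, and then to conjugate the resulting projection by $U$. So from now on $\Gamma=\Z^d$.

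For the second reduction, fix $N\in\N$ and $v\in\R^d$ and consider the $\Z^d$-invariant grid of cubes $v+\frac1N(\mathbf j+[0,1)^d)$, $\mathbf j\in\Z^d$. Let $K'$ be the union of those grid cubes whose center lies in $K$. Since the set of cube centers is $\Z^d$-periodic and $K$ tiles $\R^d$ under $\Z^d$ modulo a null set, for almost every $v$ no cube center falls into that null set, and then a short computation shows $\{K'+k:k\in\Z^d\}$ partitions $\R^d$; thus $K'$ is a precompact fundamental domain of $\Z^d$ contained in the $\frac{\sqrt d}{2N}$-neighborhood of $K$. Choosing $N$ large, it therefore suffices to produce, for a fundamental domain $K'$ that is a finite disjoint union $\bigsqcup_{C\in\mathcal C}C$ of grid cubes, a Hestenes orthogonal projection $P$ localized on an arbitrarily small neighborhood of $\overline{K'}$ with $\sum_{k\in\Z^d}T_kPT_{-k}=\mathbf I$.

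For such a $K'$, and a grid cube $C=\prod_{l=1}^{d}I_l$ with each $I_l$ of length $1/N$, put $P_C=\bigotimes_{l=1}^{d}P_{I_l}$, using \eqref{exp} with a common fold width $\delta<\frac1{2N}$; then $P_C$ is an orthogonal projection and a Hestenes operator localized on the $\delta$-neighborhood of $\overline C$. The key point is that for \emph{distinct} grid cubes $C\ne C'$ one has $P_CP_{C'}=0$, \emph{even when $\overline C\cap\overline{C'}\ne\emptyset$}: such $C,C'$ differ in some coordinate $l_0$, where $P_{I_{l_0}}$ and $P_{I'_{l_0}}$ are projections onto distinct grid intervals of length $1/N$; if those intervals are not adjacent the two operators have disjoint supports, while if they are adjacent then $P_{I_{l_0}}+P_{I'_{l_0}}$ is a projection by \eqref{summing}, and two orthogonal projections whose sum is a projection are mutually orthogonal, so $P_{I_{l_0}}P_{I'_{l_0}}=0$ again; either way the $l_0$-th factor of $P_CP_{C'}=\bigotimes_l(P_{I_l}P_{I'_l})$ vanishes. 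Hence $P:=\sum_{C\in\mathcal C}P_C$ is a finite sum of mutually orthogonal orthogonal projections, so an orthogonal projection, and a Hestenes operator localized on the $\delta$-neighborhood of $\overline{K'}$. Since $K'=\bigsqcup_{C\in\mathcal C}C$ is a fundamental domain, $\mathcal C$ is a complete set of representatives for the $\Z^d$-action on the grid, so $\{C+k:C\in\mathcal C,\ k\in\Z^d\}$ lists every grid cube exactly once, and therefore, using \eqref{translacja} coordinatewise,
\[
\sum_{k\in\Z^d}T_kPT_{-k}=\sum_{C\in\mathcal C}\ \sum_{k\in\Z^d}P_{C+k}=\sum_{D\text{ a grid cube}}P_D=\bigotimes_{l=1}^{d}\Big(\sum_{j\in\Z}P_{J_{l,j}}\Big)=\mathbf I,
\]
where $\{J_{l,j}\}_{j\in\Z}$ is the $l$-th coordinate grid and each inner sum telescopes to the identity on $L^2(\R)$ by \eqref{summing}; restricting $k$ to a box shows the convergence is in the strong operator topology, and since for $k\ne k'$ the cubes $C+k$ $(C\in\mathcal C)$ and $C'+k'$ $(C'\in\mathcal C)$ are pairwise distinct, the projections $\{T_kPT_{-k}\}_{k\in\Z^d}$ are mutually orthogonal. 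Undoing the first reduction finishes the proof.

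The main obstacle is the orthogonality $P_CP_{C'}=0$ for \emph{adjacent} cubes; this is exactly what allows a bare sum of cube projections to be a projection, and it comes down to the fact, immediate from the sum rule \eqref{summing}, that the two halves of a split interval yield mutually orthogonal one-dimensional projections. A secondary technical point is verifying that the coarsened set $K'$ of the second reduction is again a fundamental domain, which is why the grid offset $v$ must be chosen generically.
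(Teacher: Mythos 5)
Your proposal is correct and follows essentially the same route as the paper: reduce to $\Gamma=\Z^d$ by a linear change of variables, build projections onto small grid cubes as tensor products of the one-dimensional projections \eqref{exp}, select a family of such cubes forming a fundamental domain close to $K$, and sum their (mutually orthogonal) projections. The only differences are cosmetic: you pick the cubes by a center criterion with a generically chosen grid offset, where the paper takes the cubes meeting $K$ and then one representative per $\Z^d$-equivalence class, and you verify $P_CP_{C'}=0$ directly from \eqref{summing} rather than deducing mutual orthogonality from the resolution of the identity for the fine lattice.
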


\begin{proof}
We will show first that it suffices to prove the theorem for the lattice $\Z^d$. Assume momentarily that Theorem \ref{lat} holds in this special case. An arbitrary full rank lattice $\Gamma \subset \R^d$ is of the form $\Gamma=M \Z^d$ for some $d\times d$ invertible matrix $M$. If $K \subset \R^d$ is a precompact fundamental domain of $\R^d/\Gamma$, then $M^{-1}(K)$ is a precompact fundamental domain of $\R^d/\Z^d$ since
\[
\{M^{-1}(K+\gamma): \gamma \in \Gamma\} =\{M^{-1}(K)+k: k \in \Z^d\}
\]
is a partition of $\R^d$ modulo null sets. Hence, for any $\epsilon>0$, there exists a Hestenes operator $P'$, which is an orthogonal projection localized on $\epsilon$-neighborhood of $M^{-1}(K)$ such that 
\[
\sum_{k\in \Z^d } T_k P' T_{-k} = \mathbf I.
\]
Define a Hestenes operator $P= D_{M^{-1}} P'  D_{M}$, where $D_M$ is a dilation operator $D_M f(x)= f(Mx)$. Since $|\det M|^{1/2}D_M$ is an isometric isomorphism of $L^2(\R^d)$ we deduce that $P$ is an orthogonal projection. Since $T_k D_M= D_M T_{Mk}$, we have
\[
\begin{aligned}
\sum_{\gamma\in \Gamma} T_\gamma P T_{-\gamma} = \sum_{k\in \Z^d} T_{Mk} D_{M^{-1}} P'  D_{M}T_{-Mk} & = \sum_{k\in \Z^d} D_{M^{-1}} T_k P'  T_{-k} D_{M} 
\\
&= D_{M^{-1}} \circ \bigg(  \sum_{k\in \Z^d} T_k P'  T_{-k} \bigg) \circ D_{M} =\mathbf I. 
\end{aligned}
\]
Since $P'$ is localized on $\epsilon$-neighborhood $U$ of $M^{-1}(K)$ we deduce that $P= D_{M^{-1}} P'  D_{M}$ is localized in $M(U)$, which is contained in $||M||\epsilon$-neighborhood of $K$. Since $\epsilon>0$ is arbitrary, this concludes the reduction step. 

Next we will show the theorem in the special case when the lattice $\Gamma=\Z^d$ and the fundamental domain is the unit cube  $K=[0,1]^d$. Let $P_{[0,1]}$ be the orthogonal projection on $L^2(\R)$, which is given by \eqref{exp}, and localized on open interval $(-\delta,1+\delta)$. Since $P_{[0,1]}$ has opposite polarities at the endpoints, by \eqref{translacja} and \eqref{summing}  we have
\begin{equation}\label{1d}
\sum_{k\in \Z } T_k P_{[0,1]} T_{-k} = \mathbf I,
\end{equation}
where the convergence is in the strong operator topology in $L^2(\R)$, see \cite[Formula (3.18) in Ch. 1]{HW}.
Define $P_K$ as the $d$-fold tensor product $P_K=P_{[0,1]}\otimes \ldots \otimes P_{[0,1]}$, see \cite[Lemma 3.1]{BD}. That is, $P_K$ is defined initially on separable functions
\[
(f_1 \otimes \ldots \otimes f_d)(x_1,\ldots,x_d) = f_1(x_1) \cdots f_d(x_d),
\qquad\text{for }x=(x_1,\ldots,x_d) \in \R^d,
\]
by
\[
P_K(f_1\otimes \ldots \otimes  f_d)=P_{[0,1]}(f_1) \otimes \ldots \otimes  P_{[0,1]}(f_d) 
\]
and then extended to a Hestenes operator on $\R^d$. Then, $P_K$ is an orthogonal projection localized on a cube $(-\delta,1+\delta)^d$. Then, using \eqref{1d} we can verify its $d$-dimensional analogue for separable functions
\begin{equation}\label{dd}
\begin{aligned}
\sum_{k\in \Z^d } T_k P_{K} T_{-k} (f_1 \otimes \ldots \otimes f_d) &= \sum_{(k_1,\ldots,k_d) \in \Z^d} T_{k_1} P_{[0,1]} T_{-k_1}(f_1)
\otimes \ldots \otimes T_{k_d} P_{[0,1]} T_{-k_d} (f_d)
\\
&=
f_1 \otimes \ldots \otimes f_d.
\end{aligned}
\end{equation}
Since linear combinations of separable functions are dense in $L^2(\R^d)$, the above formula holds for all functions in $L^2(\R^d)$. Choosing $\delta>0$ such that $\sqrt{d} \delta< \epsilon$ yields the required projection $P=P_K$ satisfying \eqref{lat0}.

By the scaling argument we obtain the same conclusion for the  lattice $\Gamma = n^{-1} \Z^d$, and the fundamental domain $n^{-1}[0,1]^d$, where $n\in \N$. That is, define a projection $P'= D_{M^{-1}} P_{[0,1]^d} D_M$, where $M=n^{-1}\mathbf I_d$ is a multiple of $d\times d$ identity matrix $\mathbf I_d$. That is, $P'$ is a Hestenes operator, which is an orthogonal projection on $L^2(\R^d)$ satisfying
\begin{equation}\label{lat5}
\sum_{k\in n^{-1}\Z^d } T_k P' T_{-k} = \mathbf I.
\end{equation}

Let $K$ be an arbitrary precompact fundamental domain of $\R^d/\Z^d$. Choose $n\in \N$ such that
\begin{equation}\label{lat7}
(\sqrt{d}+2)/n<\epsilon.
\end{equation}
Let $P'$ be a Hestenes operator, which is orthogonal projection localized on $1/n$-neighborhood of $n^{-1}[0,1]^d$ such that \eqref{lat5} holds. Let 
\begin{equation}\label{lat9}
F_0 = \{k\in n^{-1} \Z^d: (n^{-1}[0,1]^d+k) \cap K  \ne \emptyset \}.
\end{equation}
Since $K$ is a fundamental domain of $\R^d/\Z^d$ we have
\begin{equation}\label{lat11}
\bigcup_{l\in \Z^d} (l+ F_0) = n^{-1} \Z^d.
\end{equation}
We define an equivalence relation on $F_0$: $k,k' \in F_0$ are in relation if  $k-k' \in \Z^d$. Then, we choose a subset $F_1 \subset F_0$ containing exactly one representative in each equivalence class. Hence, the family $\{l+ F_1: l\in  \Z^d\}$ is a partition of the lattice $n^{-1} \Z^d$. Define a Hestenes operator
\[
P = \sum_{k\in F_1}  T_k P' T_{-k}.
\]
Since projections $T_k P' T_{-k}$, $k\in n^{-1}\Z^d$, are mutually orthogonal, $P$ is also an orthogonal projection on $L^2(\R^d)$. Since the operator $T_k P' T_{-k}$ is localized on $1/n$-neighborhood of the cube $n^{-1}[0,1]^d+k$, whose diameter is $<\epsilon$ by \eqref{lat7}, we deduce by \eqref{lat9} that $P$ is localized on $\epsilon$-neighborhood of $K$. Combining \eqref{lat5} with the fact that  $\{l+ F_1: l\in  \Z^d\}$ is a partition of the lattice $n^{-1} \Z^d$ yields
\[
\sum_{l\in \Z^d } T_l P T_{-l}
=
 \sum_{l\in \Z^d } \sum_{k\in F_1 } T_{k+l} P' T_{-(k+l)}
 =\mathbf I.
 \] 
The convergence is in the strong operator topology in $L^2(\R^d)$.
\end{proof}

The following example illustrates Theorem \ref{lat} by an example. Let $K$ be a hexagon with the vertices: 
\[
p_1=(1,0), \ p_2=(1/2,\sqrt{3}/2), \ p_3=(-1/2,\sqrt{3}/2),\ p_4=-p_1,\ p_5=-p_2,\ p_6=-p_3.
\]
The set $K$ is a fundamental domain for the lattice $\Gamma=M\Z^2$, where 
\[
M=[w_1 |w_2], \qquad w_1=\left[\begin{matrix} 0\\ \sqrt{3} \end{matrix}\right], \quad w_2=\left[\begin{matrix} 3/2\\ \sqrt{3}/2 \end{matrix}\right].
\]
Then we transform $K$ so that $M^{-1}K$ is a fundamental domain for the lattice $\Z^2$, see Figure 1.

\begin{figure}
\includegraphics{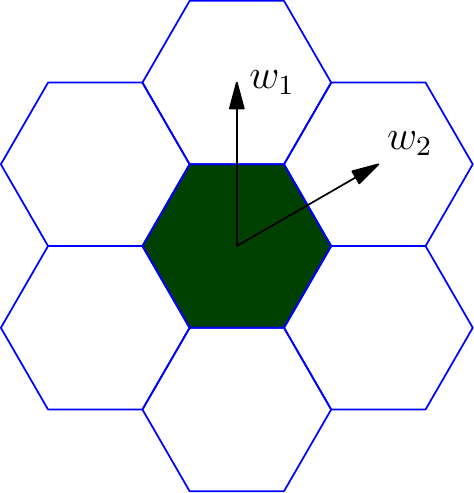}
\hskip1cm
\includegraphics{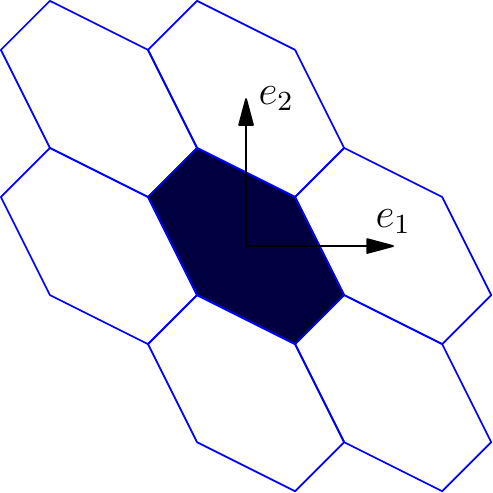}
\caption{Sets $K$ and  $M^{-1}K$.}
\end{figure}

Next we consider a grid $1/n \Z^2$, where $n$ is a scaling parameter. 
We color all cubes which have nonempty intersection with $M^{-1}K$.
 If a scaling parameter $n$ is sufficiently small we have all cubes in
$\epsilon$ neighborhood of $M^{-1}K$, see Figure 2.  To construct orthogonal projection from Theorem \ref{lat}
 we need to choose cubes that form a fundamental domain for the lattice $\Z^2$ by eliminating redundant cubes, see Figure 2.

\begin{figure}
\includegraphics{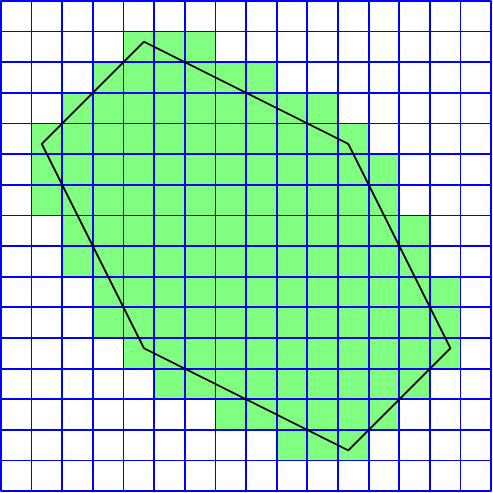}
\hskip1cm
\includegraphics{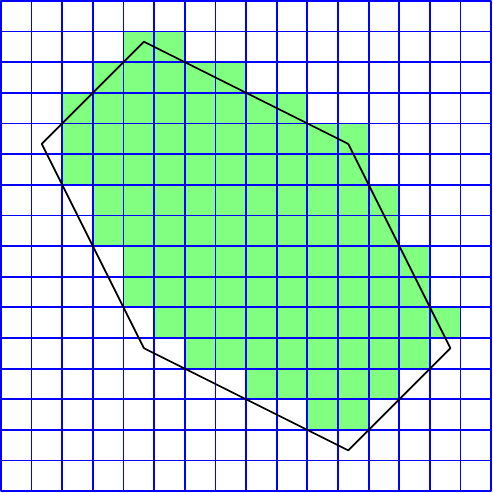}
\caption{Construction in Theorem \ref{lat} for scaling parameter $n=10$.} 
\end{figure}


\begin{corollary}
Let $B$ be a ball in the torus $\T^d=\R^d/\Z^d$. Then there exists a discrete subgroup $G \subset \T^d$ and a Hestenes operator $P$, which is orthogonal projection localized on $B$, such that 
\[
\sum_{\gamma \in G } T_\gamma P T_{-\gamma} f= f
\qquad\text{for all } f\in L^2(\T^d).
\]
In particular, projections 
$T_\gamma P T_{-\gamma}$, $\gamma \in \Gamma$, are mutually orthogonal.
\end{corollary}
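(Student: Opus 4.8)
The plan is to pull the problem back, via the covering projection $\pi\colon\R^d\to\T^d=\R^d/\Z^d$, to Theorem \ref{lat} for a suitably fine sublattice. Shrinking $B$ to a smaller concentric ball if necessary, we may assume its radius $\epsilon$ is less than $1/2$; then $B$ has a lift $\widetilde B\subset\R^d$, a Euclidean ball of radius $\epsilon$ on which $\pi$ is a diffeomorphism onto $B$, and $\widetilde B$ has diameter $<1$. Fix $n\in\N$ large enough that some translate $K$ of $\tfrac1n[0,1]^d$, together with a small neighborhood $U$ of it, lies inside $\widetilde B$ (e.g.\ $\sqrt d/n<\epsilon$); note $K$ is a precompact fundamental domain of $\R^d/\tfrac1n\Z^d$. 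Theorem \ref{lat}, applied to the lattice $\tfrac1n\Z^d$ and the domain $K$, produces a Hestenes operator $\widetilde P\in\mathcal H(\R^d)$ which is an orthogonal projection localized on $U$ with
\[
\sum_{\sigma\in\frac1n\Z^d}T_\sigma\,\widetilde P\,T_{-\sigma}=\mathbf I\qquad\text{on }L^2(\R^d).
\]
I then take $G=\tfrac1n\Z^d/\Z^d$, a finite --- hence discrete --- subgroup of $\T^d$.

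The next step is to transport $\widetilde P$ to the torus. Since $\widetilde P$ is localized on $U$ and $\pi|_{\ov U}$ is injective (as $\operatorname{diam}\ov U<1$), I define $P\in\mathcal H(\T^d)$ by replacing, in each simple $H$-operator $H_{\vp,\Phi,V}$ making up $\widetilde P$ (with $V,\Phi(V)\subset U$), the data $\vp,\Phi,V$ by $\vp\circ(\pi|_U)^{-1}$, $\pi\circ\Phi\circ(\pi|_U)^{-1}$, $\pi(V)$; thus $P$ is localized on $B':=\pi(U)\subset B$. To see $P$ is an orthogonal projection of $L^2(\T^d)$: because $\widetilde P$ is localized on $U$, it is built by restriction to $U$ and extension by zero from an orthogonal projection $\widetilde P_0$ of $L^2(U)$, and $P$ is built in the same way from an operator $P_0$ on $L^2(B')$; since $\pi|_U$ is a measure-preserving diffeomorphism onto $B'$ it induces a unitary $W\colon L^2(B')\to L^2(U)$, and the formulas above give $P_0=W^{-1}\widetilde P_0W$ outright, so $P_0$, hence $P$, is an orthogonal projection.

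It remains to prove the decomposition $\sum_{\gamma\in G}T_\gamma\,P\,T_{-\gamma}=\mathbf I$ on $L^2(\T^d)$; this is a \emph{finite} sum, so by density it suffices to check it pointwise for each $g\in C^\infty(\T^d)$. Fixing $y\in\T^d$, a lift $\tilde y\in\R^d$, and the $\Z^d$-periodic lift $\tilde g=g\circ\pi$, I would cut $\tilde g$ off to an $h\in C^\infty_c(\R^d)$ agreeing with $\tilde g$ on a ball about $\tilde y$ of radius exceeding $\operatorname{diam}\ov U$, and apply the $\R^d$-identity to $h$ (valid pointwise, the sum being locally finite with continuous terms). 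Only finitely many $\sigma\in\tfrac1n\Z^d$ contribute at $\tilde y$; their localization sets are translates of a fixed compact subset of $U$, hence have diameter $<1$ and lie inside the ball where $h=\tilde g$, so by locality of Hestenes operators each contributing term equals the same term evaluated on $\tilde g$ itself. These $\sigma$ project to distinct elements of $G$, and are exactly the $\gamma\in G$ for which $T_\gamma P T_{-\gamma}g$ is nonzero at $y$; and since on a set of diameter $<1$ the map $\pi$ is a diffeomorphism intertwining translation by $\sigma$ with translation by $\sigma+\Z^d$, the definition of $P$ forces the $\sigma$-term at $\tilde y$ to equal the $(\sigma+\Z^d)$-term at $y$. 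Summing recovers $\tilde g(\tilde y)=g(y)$, and mutual orthogonality of the $T_\gamma P T_{-\gamma}$ follows automatically from their summing to $\mathbf I$.

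The main obstacle is this last bookkeeping: identifying, at a point $y\in\T^d$, the finite torus sum over $G$ with the contributing part of the locally finite $\R^d$ sum over $\tfrac1n\Z^d$. Everything goes through because the construction keeps every relevant piece supported in a set of diameter $<1$, where $\R^d$ and $\T^d$ are indistinguishable; the routine verifications --- that the transport preserves the Hestenes class and the projection property, and the final density/boundedness extension from $C^\infty(\T^d)$ to $L^2(\T^d)$ --- I expect to be straightforward.
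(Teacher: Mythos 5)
Your proposal is correct and takes essentially the same route as the paper: apply Theorem \ref{lat} to the refined lattice $n^{-1}\Z^d$ with a small cube fundamental domain contained in (a lift of) the ball, take $G=(n^{-1}\Z^d)/\Z^d$, and descend to $\T^d$ using that all the localization sets have diameter less than $1$, so the covering map is injective there. The only difference is presentational: the paper pushes $P'$ down by $\Z^d$-periodization $P=\sum_{k\in\Z^d}T_kP'T_{-k}$ and invokes disjointness of the translated balls, whereas you transport the simple $H$-operator data through the local diffeomorphism and verify the decomposition by pointwise bookkeeping — these yield the same operator.
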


\begin{proof}
Let $p:\R^d \to \T^d=\R^d/\Z^d$ be the quotient map. Then, a ball $B$ in the torus $\T^d=\R^d/\Z^d$ is of the form $B=p(\mathbf B(x,r))$, where $\mathbf B(x,r)$ is a ball in $\R^d$. Without loss of generality, we can assume that $r<1/(2\sqrt{d})$, so that the balls $\mathbf B(x+k,r)$, $k\in\Z^d$, are disjoint. Choose sufficiently large $n\in \N$ such $x+[0,1/n]^d \subset \mathbf B(x,r)$. Then, $K=x+[0,1/n]^d$ is a fundamental domain of $\R^d/\Gamma$, where $\Gamma=n^{-1}\Z^d$. By Theorem \ref{lat} there exists a Hestenes operator $P'$ on $\R^d$, which is localized in $\mathbf B(x,r)$, such that $P'$ is an orthogonal projection satisfying \eqref{lat5}. Define $\Z^d$-periodization of $P'$ by
\[
P= \sum_{k\in \Z^d} T_k P' T_{-k}.
\]
We can treat $P$ as a Hestenes operator on $\T^d$, which is an orthogonal projection on $L^2(\T^d)$ localized on $B$. This follows from the fact that $P'$ is localized in $\mathbf B(x,r)$ and the balls $\mathbf B(x+k,r)$, $k\in\Z^d$, are disjoint. Hence, we obtain the conclusion for the group $G=(n^{-1}\Z^d)/\Z^d$.
\end{proof}

We end this section with a continuous analogue of Theorem \ref{lat} on the real line, which motivates results in subsequent sections.

\begin{proposition}\label{prosta}
For fixed $\delta>0$ and $\alpha<\beta$ satisfying $\frac{\beta-\alpha}{2}>\delta$, let $P_{[\alpha,\beta]}$ be a smooth orthogonal projection given by \eqref{exp}.
For any continuous function $f:\R\to \R$ and any $t\in \R$, we have
\[
\int_\R T_\xi P_{[\alpha,\beta]} T_{-\xi} f(t) d\xi =
\int_\R P_{[\xi+\alpha,\xi+\beta]} f(t)d\xi=(\beta-\alpha)f(t).
\]
\end{proposition}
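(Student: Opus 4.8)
The plan is to reduce everything to the explicit formula \eqref{exp} and a pointwise computation, using \eqref{translacja} to rewrite the conjugated operator as a shifted projection. For fixed $t$, the map $\xi \mapsto P_{[\xi+\alpha,\xi+\beta]}f(t)$ has compact support in $\xi$ (it vanishes unless $t$ lies in $[\xi+\alpha-\delta,\xi+\beta+\delta]$, i.e. unless $\xi\in[t-\beta-\delta,t-\alpha+\delta]$), so the integral is a genuine Lebesgue integral of a bounded, compactly supported, piecewise-smooth function of $\xi$; there are no convergence subtleties. The first equality $\int_\R T_\xi P_{[\alpha,\beta]}T_{-\xi}f(t)\,d\xi=\int_\R P_{[\xi+\alpha,\xi+\beta]}f(t)\,d\xi$ is then immediate from \eqref{translacja} with $k=\xi$.

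For the main equality, I would substitute the five-case description \eqref{exp} of $P_{[\xi+\alpha,\xi+\beta]}f(t)$ and integrate in $\xi$. Rather than splitting into cases, the cleanest route is to observe that for fixed $t$ the function $\xi\mapsto P_{[\xi+\alpha,\xi+\beta]}f(t)$ can be written as $a(\xi)f(t)+(\text{reflection terms})$, and to treat the ``bulk'' part and the ``boundary'' parts separately. For the bulk, note that summing the indicator-type weights: the coefficient of $f(t)$ coming from the left endpoint region is $s^2(t-\xi-\alpha)$ (for $\xi$ near $t-\alpha$), from the interior it is $1$, and from the right endpoint region it is $s^2(\xi+\beta-t)$; changing variables $u=t-\xi-\alpha$ resp. $u=\xi+\beta-t$ and using the quadratic partition of unity $s^2(u)+s^2(-u)=1$ from \eqref{sfun} together with $\supp s\subset[-\delta,\infty)$, the total $f(t)$-contribution integrates to exactly $(\beta-\alpha)$. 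For the boundary reflection terms, the left-endpoint term contributes $\int s(t-\xi-\alpha)s(\xi+\alpha-t)f(2(\xi+\alpha)-t)\,d\xi$ and the right-endpoint term contributes $-\int s(t-\xi-\beta)s(\xi+\beta-t)f(2(\xi+\beta)-t)\,d\xi$; substituting $\eta=\xi+\alpha$ in the first and $\eta=\xi+\beta$ in the second shows these two integrals are \emph{identical} (both equal $\int_\R s(t-\eta)s(\eta-t)f(2\eta-t)\,d\eta$), hence cancel.

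Alternatively — and this is probably the slickest write-up — I would invoke \eqref{summing}: for $\alpha<\beta<\gamma$ with $\delta$ small enough, $P_{[\xi+\alpha,\xi+\gamma]}f(t)=P_{[\xi+\alpha,\xi+\beta]}f(t)+P_{[\xi+\beta,\xi+\gamma]}f(t)$, so $\xi\mapsto\int_\R P_{[\xi+\alpha,\xi+\beta]}f(t)\,d\xi$ is additive in the interval length; combined with translation-invariance in $\xi$ (the integral over all of $\R$ is unchanged by shifting $\alpha,\beta$) this forces the value to be $c\cdot(\beta-\alpha)$ for a constant $c$ independent of $\alpha,\beta,f,t$, and one then pins down $c=1$ by evaluating on a single convenient test function, e.g. $f\equiv 1$, where $P_{[\xi+\alpha,\xi+\beta]}1(t)$ has a clean closed form and the integral is computed directly using \eqref{sfun}. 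I expect the main obstacle to be purely bookkeeping: keeping the arguments of $s$ and the reflection points $2(\xi+\alpha)-t$, $2(\xi+\beta)-t$ straight through the changes of variables, and making sure the ranges of $\xi$ in the case analysis match up so that the interior ``$f(t)$'' piece and the two endpoint pieces tile correctly. The reduction via \eqref{summing} sidesteps most of this and is the approach I would actually carry out.
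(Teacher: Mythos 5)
Your first, direct computation is correct and is essentially the paper's own proof: the first equality is \eqref{translacja}, the two endpoint reflection terms cancel after the substitutions $\eta=\xi+\alpha$ and $\eta=\xi+\beta$ (this cancellation is what the paper refers to as the opposite polarities at the endpoints), and the multiplication terms contribute $(\beta-\alpha-2\delta)+2\int_{-\delta}^{\delta}s^2(u)\,du=\beta-\alpha$ by \eqref{sfun}. If you write up that argument, you are done.

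However, the route via \eqref{summing} that you say you would actually carry out has a genuine gap. For fixed $f$ and $t$, additivity in the interval together with translation invariance in $\xi$ can at best give $\int_\R P_{[\xi+\alpha,\xi+\beta]}f(t)\,d\xi=c(f,t)\,(\beta-\alpha)$; the constant cannot be independent of $f$ and $t$, since the desired answer $(\beta-\alpha)f(t)$ visibly depends on $f(t)$, and the integral is linear in $f$, so evaluating at $f\equiv 1$ only determines $c$ for constant functions. Showing that $c(f,t)$ depends on $f$ only through the single value $f(t)$ is exactly the locality statement that the reflection-term cancellation provides, so \eqref{summing} does not sidestep the main computation. (There is also the minor point that additivity in the length is a Cauchy-type functional equation restricted to lengths greater than $2\delta$, so one needs a regularity remark, e.g. continuity in the length, before concluding linearity.) Carry out your first argument, not the second.
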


\begin{proof}
The first equality follows by \eqref{translacja}.
By \eqref{exp} we have
\[
\begin{aligned}
\int_\R & P_{[\xi+\alpha,\xi+\beta]} f(t) d\xi= \int_{t-\beta+\delta}^{t-\alpha-\delta} f(t)d\xi
\\
&+\int_{t-\alpha-\delta}^{t-\alpha+\delta}
s^2(t-(\alpha+\xi))f(t)+ s(t-(\alpha+\xi))s(\alpha+\xi-t)f(2(\alpha+\xi)-t)d\xi
\\
&+\int_{t-\beta-\delta}^{t-\beta+\delta}s^2(\beta+\xi-t) f(t) - s(t-(\beta+\xi))s(\beta+\xi-t)f(2(\beta+\xi)-t) d\xi.
\end{aligned}
\]
Since $P_{[\alpha,\beta]}$ has opposite polarities at endpoints, the change of variables yields
\[
\begin{aligned}
\int_\R  P_{[\xi+\alpha,\xi+\beta]} f(t) d\xi
& =f(t)(\beta-\alpha-2\delta)+2 f(t) \int_{-\delta}^\delta s^2(u) du
\\
& =f(t)(\beta-\alpha-2\delta)+2 f(t) \int_{0}^\delta (s^2(u)+s^2(-u)) du=(\beta-\alpha)f(t).
\end{aligned}
\]
The last equality follows from \eqref{sfun}.
\end{proof}

\section{Averages of smooth projections on $\Sp^1$}\label{S4}

In this section we show that
the Marcinkiewicz average of a smooth projection on an arc in $\Sp^1=\{z\in \C: |z|=1\}$ is a multiple of the identity.

\begin{definition}\label{pq}
Let $P$ be a Hestenes operator on $\R$, localized on $(a,b)$ with $b-a < 2 \pi$. 
Take $\rho$ such that $\rho < a < b < \rho + 2\pi$. 
Define an operator $\widetilde{P}$ acting on a function $f:\mathbb S^1\to \R$ by
$$
\widetilde{P}f(e^{it}) = P (f \circ \Psi_1)(t), \qquad t \in [\rho, \rho+ 2 \pi),
$$
where $\Psi_1(t)=e^{it}$.
Then $\widetilde{P} $ is a Hestenes operator on $\Sp^1$, localized on an arc $Q = \Psi_1((a,b)) \subset \Sp^1$. 
In particular, localization of $P$ on $(a,b)$ implies that $\widetilde{P}f (w) = 0$ for $w \in \Sp^1 \setminus Q$. 
This implies that  definition of $\widetilde {P}$
does not depend on $\rho$, provided $\rho < a < b < \rho + 2\pi$.
\end{definition}

Fix $\alpha<\beta$ and  $0<\delta<\frac{\beta-\alpha}{2}$. Define an operator $R_\alpha$ acting on functions $f$ on $\R$ by
\[
R_\alpha f(t) =  s(t-\alpha)s(\alpha-t)f(2\alpha-t) \qquad\text{for } t \in \R.
\]
Define a multiplication operator $Mf (t) = m(t)f(t)$, with
$$
m(t) = \begin{cases} 0 & \text{for } t < \alpha - \delta , 
\\
s^2(t-\alpha) & \text{for } t \in [\alpha-\delta, \alpha + \delta] ,
\\ 
1 & \text{for } t \in (\alpha + \delta, \beta - \delta) ,
\\
 s^2(\beta - t) & \text{for } t \in [\beta-\delta, \beta + \delta] ,
\\
 0 & \text{for } t > \beta + \delta . 
\end{cases}
$$
Then, the operator $P_{[\alpha, \beta]}$, given by formula \eqref{exp}, satisfies
\begin{equation}\label{mrr}
P_{[\alpha,\beta]} = M + R_\alpha  - R_\beta.
\end{equation}
Observe  $M$, $R_\alpha$, and $R_\beta$ are simple Hestenes operators localized on intervals $(\alpha- \delta, \beta +\delta)$, $(\alpha - \delta, \alpha +\delta)$, and $(\beta - \delta, \beta + \delta)$, respectively.
Note that
\begin{equation}\label{tr}
T_{\xi} R_\alpha T_{-\xi} f(t) =  s(t-(\alpha+\xi))s(\alpha+\xi-t)f(2(\alpha+\xi)-t) = R_{\alpha+\xi}f(t).
\end{equation}
Hence,
\begin{eqnarray*}
 T_\xi P_{[\alpha,\beta]} T_{-\xi}
 & = &
T_\xi M T_{-\xi} + R_{\alpha + \xi} - R_{\beta+ \xi},
\end{eqnarray*}
and $T_\xi M T_{-\xi} f(t) = m(t-\xi) f(t)$.

In the sequel, we need to consider both translation operators on $\R$ and on $\Sp^1$.
To distinguish between these two operators, we denote a translation (rotation) operator $\tau_z$ on $\Sp^1$ by $ \tau_z f(w) = f(z^{-1} w)$, where
$f:\Sp^1 \to \R$ and $z,w \in \Sp^1$.

\begin{lemma}\label{pxi}
Let $P$ be a Hestenes operator localized on an interval $(a,b)$ with $b-a < 2 \pi$. Define a Hestenes operator 
$\widetilde {P}$ on $\Sp^1$ by Definition \ref{pq}.
Then, $P_\xi = T_\xi P T_{-\xi} $ is a Hestenes operator localized on $(a+\xi, b+\xi)$ and $\widetilde{P_\xi}$ is defined as well. 
Moreover, we have
\begin{equation}\label{pxi0}
\widetilde{ P_\xi } = \tau_z \widetilde{P} \tau_{z^{-1}},
\quad \hbox{ where } \quad 
z = e^{i \xi} .
\end{equation}
\end{lemma}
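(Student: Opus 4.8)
The plan is to verify the two assertions separately. For the localization claim, I would note that $P$ is a finite combination of simple $H$-operators $H_{\vp,\Phi,V}$ with $V \subset (a,b)$ and $\Phi(V) \subset (a,b)$, and then observe that conjugation by the translation $T_\xi$ transforms each such simple operator into $H_{\vp(\cdot - \xi),\ \xi + \Phi(\cdot - \xi),\ \xi + V}$, a simple $H$-operator supported on $\xi + V \subset (a+\xi, b+\xi)$ with image contained in the same interval. Hence $P_\xi = T_\xi P T_{-\xi}$ is a Hestenes operator localized on $(a+\xi,b+\xi)$, which has length $b - a < 2\pi$, so Definition \ref{pq} applies and $\widetilde{P_\xi}$ is well-defined (choosing any $\rho$ with $\rho < a + \xi < b + \xi < \rho + 2\pi$).

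For the conjugation identity \eqref{pxi0}, the natural approach is to chase the definitions through the parametrization $\Psi_1(t) = e^{it}$, exploiting that $\Psi_1$ intertwines the translation $T_\xi$ on $\R$ with the rotation $\tau_z$ on $\Sp^1$ for $z = e^{i\xi}$: concretely, $\Psi_1(t - \xi) = z^{-1}\Psi_1(t)$, so that for any function $g$ on $\Sp^1$ one has $(g \circ \Psi_1)(t-\xi) = (\tau_z g)\circ\Psi_1(t)$, i.e. $T_\xi(g\circ\Psi_1) = (\tau_z g)\circ \Psi_1$. Using this twice — once to move $\tau_{z^{-1}}$ past $\Psi_1$ and once at the end to recognize $\tau_z$ — together with the defining formula $\widetilde{Q}f(e^{it}) = Q(f\circ\Psi_1)(t)$, I would compute $\widetilde{P_\xi}f(e^{it}) = (T_\xi P T_{-\xi})(f\circ\Psi_1)(t)$ and independently $(\tau_z\widetilde P\tau_{z^{-1}}f)(e^{it}) = \widetilde P(\tau_{z^{-1}}f)(z^{-1}e^{it})$, reducing the second expression via the intertwining relation to $P(T_{-\xi}(f\circ\Psi_1))(t-\xi) = (T_\xi P T_{-\xi})(f\circ\Psi_1)(t)$, matching the first.

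The one genuine subtlety — and the point I would treat carefully rather than gloss over — is the bookkeeping of the base point $\rho$ appearing in Definition \ref{pq}. The operator $\widetilde{P_\xi}$ is built using a $\rho'$ adapted to the shifted interval $(a+\xi, b+\xi)$, whereas $\widetilde P$ uses a $\rho$ adapted to $(a,b)$; one must use the remark in Definition \ref{pq} that the construction is independent of the admissible choice of base point (guaranteed by the localization of the underlying operator, which forces $\widetilde{Q}f$ to vanish off the relevant arc) to make the two sides genuinely comparable as operators on all of $\Sp^1$. Once that independence is invoked, the chain of equalities above closes and both claims of the lemma follow.
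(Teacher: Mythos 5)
Your proposal is correct and follows essentially the same route as the paper: the localization claim is handled by conjugating simple $H$-operators (the paper just cites the explicit formula), and the identity \eqref{pxi0} is verified by the same intertwining relation $T_\xi(g\circ\Psi_1)=(\tau_{e^{i\xi}}g)\circ\Psi_1$ and the same chain of equalities, with the base-point issue resolved exactly as the paper does by choosing $\rho+\xi$ for the shifted interval and invoking the $\rho$-independence in Definition \ref{pq}.
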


\begin{proof}
The fact that $P_\xi$ is a Hestenes operator localized on $(a+\xi, b+\xi)$ follows from an explicit formula for $P_\xi$ when $P$ is a simple Hestenes operator.
To verify \eqref{pxi0}, take $f : \Sp^1 \to \R$. Observe first that for $u\in \R$,
$$
T_u (f \circ \Psi_1) = (\tau_{e^{iu}} f) \circ \Psi_1.
$$
Indeed, we have
\begin{align*}
T_u (f \circ \Psi_1) (t)  & =  (f \circ \Psi_1)(t-u) 
= f(e^{it} e^{-iu}) 
\\  & =  (\tau_{e^{iu} }f)(e^{it}) = (\tau_{e^{iu}} f) \circ \Psi_1 (t).
\end{align*}
Fix $\rho$ such that $\rho < a < b < \rho + 2 \pi$. Clearly, $\rho+ \xi < a + \xi <  b+\xi  < \rho +\xi+ 2 \pi$,
and $ t \in [\rho + \xi ,  \rho +  \xi + 2 \pi)$ if and only if $t - \xi \in [\rho  , \rho + 2 \pi)$
Therefore, for $ t \in [\rho + \xi ,  \rho +  \xi + 2 \pi)$
\begin{align*}
\widetilde{ P_\xi } f (e^{it}) & = 
P_\xi (f \circ \Psi_1)(t) = T_\xi P T_{-\xi} (f \circ \Psi_1)(t)
\\ & =  P( T_{-\xi}(f \circ \Psi_1))(t-\xi) 
= P( (\tau_{e^{-i\xi}} f )\circ \Psi_1)(t-\xi)
\\ & =   \widetilde{P}( \tau_{z^{-1}}f)(e^{i(t-\xi)}) 
=  \widetilde{P}( \tau_{z^{-1}}f)(e^{it} z^{-1})
= \tau_z  \widetilde{P}\tau_{z^{-1}}f (e^{it}). \qedhere
\end{align*}
\end{proof}

Since $SO(2)\approx \Sp^1$ with normalized Haar measure $\mu$, the Marcinkiewicz average of an operator $P$ is given by 
$$
{\mathcal S}(P)f(w) = \int_{\Sp^1} \tau_z P \tau_{z^{-1}} f (w) d\mu(z), \qquad w\in \Sp^1.
$$

\begin{theorem}\label{uu}
Let $\alpha<\beta$ be such that $\beta-\alpha< 2 \pi$. Let $\delta>0$ be such that 
\begin{equation}\label{dab}
2\delta< \min(\beta - \alpha, 2\pi -(\beta-\alpha)).
\end{equation}
For $Q = \Psi_1([\alpha,\beta])$, consider an operator $P_Q = \widetilde{P_{[\alpha,\beta]}}$ as in Definition \ref{pq}.
Then, for any continuous function $f:\Sp^1\to \R$ and any $w\in \Sp^1$, the Marcinkiewicz average satisfies
\begin{equation}\label{us}
\calS(P_{Q})(f)(w)=\frac{\beta-\alpha}{2\pi} f(w).
\end{equation}

\end{theorem}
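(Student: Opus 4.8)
The plan is to reduce the claim about the sphere $\Sp^1$ to the one-dimensional statement already established in Proposition \ref{prosta}. By Lemma \ref{pxi}, we have the conjugation identity $\widetilde{P_\xi} = \tau_z \widetilde{P} \tau_{z^{-1}}$ with $z = e^{i\xi}$, where $P_\xi = T_\xi P_{[\alpha,\beta]} T_{-\xi}$. Since $SO(2) \approx \Sp^1$ with normalized Haar measure $d\mu(z) = \frac{1}{2\pi} d\xi$ under the parametrization $z = e^{i\xi}$, $\xi \in [\rho, \rho+2\pi)$, we can rewrite
\[
\calS(P_Q)(f)(w) = \int_{\Sp^1} \tau_z P_Q \tau_{z^{-1}} f(w)\, d\mu(z) = \frac{1}{2\pi}\int_\rho^{\rho+2\pi} \widetilde{P_\xi} f(w)\, d\xi.
\]

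First I would fix $w = e^{it} \in \Sp^1$ and unwind $\widetilde{P_\xi} f(e^{it})$ using Definition \ref{pq}: for a suitable choice of the base point $\rho$ (depending on $t$, which is permissible since $\widetilde{P_\xi}$ is independent of $\rho$ in the allowed range), we have $\widetilde{P_\xi} f(e^{it}) = P_{[\alpha+\xi,\beta+\xi]}(f \circ \Psi_1)(t)$, where I also use \eqref{translacja}. The key point is that as $\xi$ ranges over an interval of length $2\pi$, the function $g = f\circ\Psi_1$ restricted to the relevant window is well-defined and $2\pi$-periodic; the support condition \eqref{dab} guarantees that the ``active'' region of $P_{[\alpha+\xi,\beta+\xi]}$ — namely where it differs from the operator being localized — stays within a single period, so no wrap-around ambiguity arises. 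Thus
\[
\calS(P_Q)(f)(e^{it}) = \frac{1}{2\pi}\int_\rho^{\rho+2\pi} P_{[\alpha+\xi,\beta+\xi]} g(t)\, d\xi,
\]
and since the integrand, viewed as a function of $\xi \in \R$, is itself $2\pi$-periodic (because $g$ is $2\pi$-periodic and translating both the interval and the function by $2\pi$ leaves $P_{[\alpha+\xi,\beta+\xi]}g(t)$ unchanged), the integral over $[\rho,\rho+2\pi)$ equals the integral over $[t-\beta-\delta, t-\alpha+\delta)$ plus a negligible tail, or more cleanly: I would split $\R = \bigcup_{n\in\Z}[\rho+2\pi n, \rho+2\pi(n+1))$ and identify the single period integral with the computation in Proposition \ref{prosta}, which gives $\int_\R P_{[\xi+\alpha,\xi+\beta]} h(t)\,d\xi = (\beta-\alpha) h(t)$ for compactly supported continuous $h$ — here I would apply it to $h = g \cdot \mathbf 1_{[t-\beta-\delta-1, t-\alpha+\delta+1]}$ or simply extract the relevant finite-$\xi$-interval directly from the proof of Proposition \ref{prosta}, since only $\xi \in [t-\beta-\delta, t-\alpha+\delta]$ contributes to $P_{[\xi+\alpha,\xi+\beta]}g(t)$ being nonzero, and this window has length $\beta-\alpha+2\delta < 2\pi$ by \eqref{dab}. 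This yields exactly $\frac{1}{2\pi}(\beta-\alpha) g(t) = \frac{\beta-\alpha}{2\pi} f(e^{it})$.

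The main obstacle I anticipate is the bookkeeping around periodicity and the choice of $\rho$: one must verify carefully that for fixed $t$, the set of $\xi$ in one period for which $P_{[\alpha+\xi,\beta+\xi]}(f\circ\Psi_1)(t) \ne 0$ is genuinely an interval of length at most $\beta-\alpha+2\delta$ lying inside that period (after choosing $\rho$ appropriately), so that Proposition \ref{prosta}'s computation transfers without the periodic identification of $f\circ\Psi_1$ causing the contributions from $f(t)$, $f(2(\alpha+\xi)-t)$, and $f(2(\beta+\xi)-t)$ to interfere or double-count. Condition \eqref{dab} — specifically $2\delta < 2\pi - (\beta-\alpha)$ — is exactly what makes this window fit. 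Once this is set up correctly, the computation is identical to that in Proposition \ref{prosta}, exploiting that $P_{[\alpha,\beta]}$ has opposite polarities at its endpoints (the $+R_\alpha$ and $-R_\beta$ terms cancel appropriately under integration via the change of variables $u = t-(\alpha+\xi)$ resp. $u=t-(\beta+\xi)$), together with the defining relation $s^2(u)+s^2(-u)=1$ from \eqref{sfun}.
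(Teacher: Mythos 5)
Your argument is correct, but it follows a genuinely different route from the paper. You reduce the circle statement to Proposition \ref{prosta} by parametrizing $SO(2)$ by $\xi\in[\rho,\rho+2\pi)$, unwinding $\tau_z P_Q\tau_{z^{-1}}=\widetilde{P_\xi}$ via Lemma \ref{pxi}, and observing that for fixed $w=e^{it}$ only a $\xi$-window of length $\beta-\alpha+2\delta<2\pi$ (this is where \eqref{dab} enters) contributes, so the period integral of $\widetilde{P_\xi}f(w)$ identifies with $\frac{1}{2\pi}\int_\R P_{[\xi+\alpha,\xi+\beta]}(f\circ\Psi_1)(t)\,d\xi$; note that Proposition \ref{prosta} applies verbatim to the continuous $2\pi$-periodic function $g=f\circ\Psi_1$, so your proposed truncation by an indicator (which would destroy continuity) is unnecessary --- the cleanest bookkeeping is to write $\widetilde{P_\xi}f(w)=\sum_{n\in\Z}P_{[\alpha+\xi,\beta+\xi]}g(t+2\pi n)$ (at most one term nonzero) and fold the sum into the integral over $\R$. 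The paper instead works intrinsically on $\Sp^1$: it splits $P_{[\alpha,\beta]}=M+R_\alpha-R_\beta$ as in \eqref{mrr}, uses $\widetilde{R_\beta}=\tau_v\widetilde{R_\alpha}\tau_{v^{-1}}$ with $v=e^{i(\beta-\alpha)}$, and cancels the two reflection terms after integration by the translation invariance of the Haar measure, so that only the multiplier $\widetilde M$ survives and the constant comes from $\int_{\Sp^1}m_Q\,d\mu=\frac{\beta-\alpha}{2\pi}$. The paper's approach buys a shorter computation with no wrap-around bookkeeping (the polarity cancellation is absorbed into Haar invariance), while yours makes the link to the line result explicit and shows that Proposition \ref{prosta} is not merely motivational; the price is the careful verification, which you correctly identify and which \eqref{dab} guarantees, that the active $\xi$-window fits inside a single period so no interference from the periodization occurs.
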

\begin{proof}
Denote $\kappa = \beta - \alpha$ and $v = e^{i \kappa}$.
By \eqref{tr} 
$$
R_\beta = R_{\alpha + \kappa} = T_\kappa R_\alpha T_{-\kappa},
$$
and consequently by Lemma \ref{pxi} we have
\begin{equation}\label{us2}
\widetilde{R_\beta} = \tau_v \widetilde{R_\alpha}\tau_{v^{-1}}.
\end{equation}
Therefore,
\begin{equation}\label{us4}
\tau_z \widetilde{R_\beta} \tau_{z^{-1}} = \tau_z \tau_v \widetilde{R_\alpha}\tau_{v^{-1}} \tau_{z^{-1}}
= \tau_{zv} \widetilde{R_\alpha} \tau_{(zv)^{-1}}.
\end{equation}
Observe that $\widetilde{M}f=m_Qf$, where $m_Q$ is a function on $\mathbb S^1$ given by 
\[
m_Q (e^{it}) = \begin{cases}
s^2(t-\alpha) &
t \in [\alpha-\delta,\alpha+\delta],
\\
1 & t\in (\alpha+\delta,\beta-\delta),
\\
s^2(\beta-t)  &
t\in [\beta-\delta,2\pi+\alpha-\delta).
\end{cases}
\]
By \eqref{mrr} and \eqref{us2} we have
$$
P_Q = \widetilde{P_{[\alpha,\beta]}} 
= \widetilde{M}+ \widetilde{R_\alpha}  - \tau_v \widetilde{R_\alpha}\tau_{v^{-1}}.
$$
By \eqref{us4} this implies that
\[
\tau_z P_Q \tau_{z^{-1}}   =  \tau_z \widetilde{M} \tau_{z^{-1}} + 
\tau_z  \widetilde{R_\alpha} \tau_{z^{-1}} - \tau_{zv} \widetilde{R_\alpha} \tau_{(zv)^{-1}}.
\]
Further, note that 
$$
\tau_z \widetilde{M} \tau_{z^{-1}} f (w) = m_Q(wz^{-1}) f(w).
$$
Summarizing, we get
\begin{equation}
\tau_z P_Q \tau_{z^{-1}} f (w) =  m_Q(wz^{-1}) f(w) + 
\tau_z  \widetilde{R_\alpha} \tau_{z^{-1}} f(w) - \tau_{zv} \widetilde{R_\alpha} \tau_{(zv)^{-1}} f(w).
\label{aaa}
\end{equation}
By the invariance of Haar measure applied to $g(z) = \tau_z  \widetilde{R_\alpha} \tau_{z^{-1}} f(w)$ we see that
$$
\int_{\Sp^1} \tau_z  \widetilde{R_\alpha} \tau_{z^{-1}} f(w) d \mu(z)
= \int_{\Sp^1}  \tau_{zv} \widetilde{R_\alpha} \tau_{(zv)^{-1}} f(w)d \mu(z).
$$
Therefore,  integrating \eqref{aaa} over $\Sp^1$ we obtain 
\begin{eqnarray*}
\int_{\Sp^1} \tau_z P_Q \tau_{z^{-1}} f (w) d\mu(z)  =  f(w) \int_{\Sp^1} m_Q(wz^{-1}) d\mu(z) =  f(w)  \int_{\Sp^1} m_Q(z) d\mu(z).
\end{eqnarray*}
The conclusion follows from the fact that 
\[
\int_{\Sp^1} m_Q(z) d\mu(z)= \frac{1}{2\pi} \int_{\alpha-\delta}^{2\pi+\alpha-\delta} m(t) dt
= \frac{1}{2\pi} \int_{\R} m(t) dt
=
\frac{\beta - \alpha}{2 \pi}.
\qedhere
\]
\end{proof}

\section{Latitudinal  projections on sphere}\label{S5}

In this section we define latitudinal operators, whose action depends only on latitude variable, by transplanting one dimensional Hestenes operators to meridians. We also show that the Marcinkiewicz average of latitudinal projection is a multiple of the identity.

For $k\geq 2$, we define a surjective function 
\[
\Phi_{k}:[0,\pi]\times \mathbb S^{k-1} \to \mathbb S^{k}
\]
by the formula
\begin{equation}\label{Phi}
\Phi_{k}(\th,\xi)=(\xi\sin \th,\cos \th),
\qquad\text{where }(\th,\xi)\in [0,\pi]\times \mathbb S^{k-1}.
\end{equation}
Note that $\Phi_{k}$ is a diffeomorphism
\[
\Phi_{k}:(0,\pi)\times \mathbb S^{k-1} \to \mathbb S^{k}\setminus \{\1^{k},-\1^{k}\},
\]
where $
 \1^{k}=(0,\ldots,0,1)\in \mathbb S^{k}$ is the ``North Pole".
 Let $d_g$ be Riemannian metric on a sphere and let $\1=\1^{d-1}$.
 Note that for $\xi\in \Sp^{d-1}$, $d_g(\1,\xi)=t$, where $\lan \1,\xi \ran=\cos t$.

\begin{definition}
Let $P:C[0,\pi] \to C[0,\pi]$ be a continuous operator.  
 For fixed $k \geq 2$, let $\mathbf I$ be the  identity operator on $C(\Sp^{k-1})$. Define an operator 
\[
P\otimes \mathbf I:  C([0,\pi] \times \Sp^{k-1}) \to C([0,\pi] \times \Sp^{k-1}),
\] 
acting on a continuous function $g$ on $[0,\pi] \times \Sp^{k-1}$  by  
\[
(P\otimes \mathbf I) g  (t,y) = P\left(g( \cdot, y)\right)(t), \qquad (t,y)\in [0,\pi] \times \Sp^{k-1}.
\]
\end{definition}

It can be checked by direct calculations that if $P,Q: C[0,\pi] \to C[0,\pi]$, then
\begin{equation}\label{match}
(P\otimes \mathbf I)\circ (Q\otimes \mathbf I)= (P\circ Q)\otimes \mathbf I.
\end{equation}

\begin{definition}\label{krzyzyk}
Let 
\[
C_0([0,\pi])=\{f\in C([0,\pi]): f(0)=f(\pi)=0 \}.
\]
Let $P:C[0,\pi] \to C[0,\pi]$ be a continuous linear operator such that 
\begin{equation}\label{zero}
P(C_0[0,\pi]) \subset  C_0[0,\pi].
\end{equation}
We define a latitudinal  operator    acting on  $f\in C(\Sp^k)$ by
\[
P^\#f(\xi)  = \begin{cases}\left(  P\otimes \mathbf I(f \circ \Phi_{k}) \right) ( \Phi_{k}^{-1}(\xi)), & \xi \in \Sp^{k} \setminus \{\1^{k},-\1^{k}\} \\
P\otimes \mathbf I (f\circ \Phi_{k})(0,\1^{k-1}), & \xi=\1^k\\
P\otimes \mathbf I (f\circ \Phi_{k})(\pi,\1^{k-1}), & \xi=-\1^k.\\
\end{cases}
\]
\end{definition}

\begin{lemma}\label{oznaka}
If $P :C[0,\pi] \to C[0,\pi]$  satisfies condition  \eqref{zero}, then $P^\#:C(\Sp^k) \to C(\Sp^k)$. 
\end{lemma}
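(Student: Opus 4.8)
The plan is to show that $P^\#$ maps $C(\Sp^k)$ into itself by checking continuity separately away from the poles and at the poles, and then verifying that the boundary values match up. The key point is that the chart $\Phi_k:(0,\pi)\times\Sp^{k-1}\to\Sp^k\setminus\{\1^k,-\1^k\}$ is a diffeomorphism, so the only issue is the behaviour of $P^\#f$ near $\1^k$ and $-\1^k$, where $\Phi_k$ collapses the sphere $\{0\}\times\Sp^{k-1}$ (respectively $\{\pi\}\times\Sp^{k-1}$) to a single point.

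First I would take $f\in C(\Sp^k)$ and set $g=f\circ\Phi_k\in C([0,\pi]\times\Sp^{k-1})$. On $\Sp^k\setminus\{\1^k,-\1^k\}$ the function $P^\#f = \big((P\otimes\mathbf I)g\big)\circ\Phi_k^{-1}$ is continuous, being a composition of the continuous map $(P\otimes\mathbf I)g$ (which is continuous on $[0,\pi]\times\Sp^{k-1}$ because $P$ is a continuous operator on $C[0,\pi]$ and $y\mapsto g(\cdot,y)$ is continuous into $C[0,\pi]$ by compactness/uniform continuity of $g$) with the diffeomorphism $\Phi_k^{-1}$. So the heart of the matter is continuity at the two poles.

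The main obstacle is precisely the pole estimate: I must show that as $\xi\to\1^k$ in $\Sp^k$, the value $\big((P\otimes\mathbf I)g\big)(\Phi_k^{-1}(\xi))$ converges to $P^\#f(\1^k)=\big((P\otimes\mathbf I)g\big)(0,\1^{k-1})$, and this is where condition \eqref{zero} enters. Here is the mechanism I would exploit. Any $\xi$ near $\1^k$ is $\Phi_k(\th,\eta)$ with $\th$ small and $\eta\in\Sp^{k-1}$, and I must control $(P\otimes\mathbf I)g(\th,\eta) = P(g(\cdot,\eta))(\th)$. Write $g(\cdot,\eta) = g(0,\eta) + h_\eta$ where $h_\eta\in C_0[0,\pi]$ when restricted appropriately — more precisely decompose $g(\cdot,\eta) = c\cdot\mathbf 1 + (g(\cdot,\eta) - c\cdot\mathbf 1)$ with $c = g(0,\eta) = f(\1^k)$ the common pole value (note $g(0,\eta)=f(\Phi_k(0,\eta))=f(\1^k)$ is independent of $\eta$!). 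The constant part contributes $c\cdot P(\mathbf 1)(\th)$, and $g(\cdot,\eta)-c\mathbf 1$ vanishes at $0$. By \eqref{zero} the operator $P$ preserves vanishing at the endpoints, so $P(g(\cdot,\eta)-c\mathbf 1)(0)=0$; combined with continuity of $P$ and the fact that $\|g(\cdot,\eta)-g(\cdot,\1^{k-1})\|_{C[0,\pi]}\to 0$ as $\eta\to\1^{k-1}$ and $\th\to 0$ (uniform continuity of $g$ on the compact set), I get $P(g(\cdot,\eta))(\th)\to P(g(\cdot,\1^{k-1}))(0)=(P\otimes\mathbf I)g(0,\1^{k-1})$. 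The subtle part is handling the simultaneous limit in $\th$ and $\eta$; I would phrase it via: $|P(g(\cdot,\eta))(\th) - P(g(\cdot,\1^{k-1}))(0)| \le |P(g(\cdot,\eta))(\th) - P(g(\cdot,\1^{k-1}))(\th)| + |P(g(\cdot,\1^{k-1}))(\th) - P(g(\cdot,\1^{k-1}))(0)|$, where the first term is $\le \|P\|\,\|g(\cdot,\eta)-g(\cdot,\1^{k-1})\|_\infty$ and the second tends to $0$ by continuity of the function $P(g(\cdot,\1^{k-1}))\in C[0,\pi]$ at $0$. Wait — this argument does not actually need \eqref{zero}. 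The role of \eqref{zero} is instead to guarantee the \emph{compatibility} of the three-line definition, i.e. that the value assigned at $\1^k$ equals the limit of values along \emph{any} approach, and in particular that the one-sided values $P\otimes\mathbf I(g)(0,y)$ for varying $y$ all agree — which they do because $g(\cdot,y)$ at pole value is the constant $f(\1^k)$, and... actually the cleanest route: \eqref{zero} ensures $P^\#$ of the "radial" part behaves, so let me keep \eqref{zero} in reserve for showing $P^\#f$ is \emph{well-defined and single-valued} at the poles regardless of the sequence $\eta_n$ chosen.

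Finally I would assemble the pieces: continuity on the open dense part, continuity at each pole by the limit argument above, and conclude $P^\#f\in C(\Sp^k)$; linearity of $f\mapsto P^\#f$ and its boundedness $\|P^\#f\|_\infty \le \|P\|\,\|f\|_\infty$ follow immediately from the corresponding properties of $P$ and $P\otimes\mathbf I$, giving $P^\#:C(\Sp^k)\to C(\Sp^k)$ as claimed. I expect the pole continuity — carefully justifying the joint limit and the role of \eqref{zero} in ruling out a discontinuity of the collapsed-fibre type — to be the only real step; everything else is formal.
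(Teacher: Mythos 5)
Your reduction to the poles is the right framing, and the off-pole step (continuity of $y\mapsto g(\cdot,y)$ into $C[0,\pi]$ plus continuity of $P$, composed with the chart $\Phi_k^{-1}$) is fine. But the pole step, which you yourself identify as the only real step, has a genuine gap — in fact two. First, your triangle-inequality estimate controls $P(g(\cdot,\eta))(\th)-P(g(\cdot,\1^{k-1}))(0)$ only when \emph{both} $\th\to 0$ and $\eta\to\1^{k-1}$; but $\xi=\Phi_k(\th,\eta)\to\1^k$ forces only $\th\to 0$, while $\eta$ may wander over all of $\Sp^{k-1}$, so this does not establish continuity at the pole. Your own remark that the argument ``does not actually need \eqref{zero}'' is the symptom: without \eqref{zero} the lemma is false (take $Ph=h(\pi/2)\,w$ with $w(0)\neq 0$; then $(P\otimes\mathbf I)g(0,y)=g(\pi/2,y)\,w(0)$ genuinely depends on $y$, so $P^{\#}f$ cannot be continuous at $\1^k$ for generic $f$), hence any proof of pole continuity that bypasses \eqref{zero} must be broken. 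Second, where you do invoke \eqref{zero} you misapply it: the function $g(\cdot,\eta)-c\mathbf 1$ with $c=f(\1^k)$ vanishes at $t=0$ but not at $t=\pi$ (its value there is $f(-\1^k)-f(\1^k)$), and \eqref{zero} only concerns functions vanishing at \emph{both} endpoints, so you cannot conclude $P(g(\cdot,\eta)-c\mathbf 1)(0)=0$ from it.

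The missing idea is the decomposition the paper uses. Set $a_0=f(\1^k)$, $a_\pi=f(-\1^k)$, $p(t)=(\pi-t)/\pi$, $q(t)=t/\pi$; then $h(\cdot,y):=g(\cdot,y)-a_0p-a_\pi q$ lies in $C_0[0,\pi]$ for every $y$, so \eqref{zero} gives $P(h(\cdot,y))(0)=P(h(\cdot,y))(\pi)=0$, and therefore $(P\otimes\mathbf I)g(0,y)=a_0(Pp)(0)+a_\pi(Pq)(0)$ is \emph{independent of} $y$ (similarly at $t=\pi$). Combined with the joint continuity of $(P\otimes\mathbf I)g$ on the compact product $[0,\pi]\times\Sp^{k-1}$ that you did prove, and the fact that $\Sp^k$ is the quotient of $[0,\pi]\times\Sp^{k-1}$ collapsing the fibres $\{0\}\times\Sp^{k-1}$ and $\{\pi\}\times\Sp^{k-1}$ to the poles, constancy on these fibres is exactly what yields continuity of $P^{\#}f$ at the poles along arbitrary approaches (compactness of $\Sp^{k-1}$ upgrades the fibrewise statement to a limit uniform in $\eta$). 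This fibre-constancy argument is precisely what your proposal leaves ``in reserve,'' so as written the proof is incomplete at its crucial point.
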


\begin{proof}
Denote
\[
C_b([0,\pi] \times \Sp^{k-1})=\{ g\in  C([0,\pi] \times \Sp^{k-1}) : \exists_{a_0, a_\pi} \forall_{\xi\in \Sp^{k-1}} g(0,\xi)=a_0, g(\pi,\xi)=a_\pi \}.
\]
Let $f$ be a function on $\Sp^k$.  Then $f\in C(\Sp^k)$ if and only if
$f\circ \Phi_k \in C_b([0,\pi] \times \Sp^{k-1})$. Indeed  $\Sp^k$ is homomorphic with the quotient space $[0,\pi] \times \Sp^{k-1}/\sim$,
which identifies $\{0\}\times\Sp^{k-1}$ and $\{\pi\}\times\Sp^{k-1}$ with single points corresponding to poles $\1^k$ and $-\1^k$, respectively.

The assumption  $P(C_0[0,\pi]) \subset  C_0[0,\pi]$ guarantees that
\[
P\otimes \mathbf I (C_b([0,\pi] \times \Sp^{k-1})) \subset C_b([0,\pi] \times \Sp^{k-1}).
\]
Indeed, let $g\in C_b([0,\pi] \times \Sp^{k-1})$ and $a_0=g(0,\1^{k-1})$ and $a_\pi=g(\pi,\1^{k-1})$. Define $p(t,y)=\frac{\pi-t}{\pi}$, $q(t,y)=\frac{t}{\pi}$ and
\[
h(t,y)=g(t,y)-a_0 p(t,y)-a_\pi q(t,y),\quad (t,y)\in [0,\pi]\times \Sp^{k-1}.
\]
Consequently for all $y\in \Sp^{k-1}$
\[
h(0,y)=h(\pi,y)=0.
\]
Hence 
\[
P\otimes  \mathbf I (h)(0,y)=P\otimes  \mathbf I (h)(\pi,y)=0.
\]
 We conclude that 
 \[
 P\otimes  \mathbf I(g)(0,y)=a_0 (P\otimes  \mathbf I)(p)(0,y) +a_\pi(P\otimes  \mathbf I)(q)(0,y).
 \]
Since  $p$ and $q$ do not depend on $y\in \Sp^{k-1}$,  functions 
$(P\otimes  \mathbf I)(p)(0,y)$ and $(P\otimes  \mathbf I)(q)(0,y)$ also do not depend on $y\in \Sp^{k-1}$.
Hence, $P\otimes  \mathbf I(g)$ is constant on $\{0\}\times \Sp^{k-1}$.
The same argument shows that $P\otimes  \mathbf I(g)$ is constant on $\{\pi\}\times \Sp^{k-1}$.
\end{proof}

\begin{lemma}\label{krzyzyki0}
If $P,Q :C[0,\pi] \to C[0,\pi]$ both satisfy condition  \eqref{zero},
then  
\begin{equation}\label{krzyzyki}
(P\circ Q)^\# = P^\# \circ Q^\#.
\end{equation}
\end{lemma}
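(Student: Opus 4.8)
The plan is to reduce the claimed identity $(P\circ Q)^\# = P^\# \circ Q^\#$ to the already-established compatibility \eqref{match} for the operators $P\otimes\mathbf I$ on the cylinder $[0,\pi]\times\Sp^{k-1}$, together with the definition of the sharp operation via the diffeomorphism $\Phi_k$. First I would note that by Lemma \ref{oznaka} both $Q^\#$ and $P^\#$ map $C(\Sp^k)$ into $C(\Sp^k)$, and that $P\circ Q$ again satisfies \eqref{zero} (since $Q(C_0[0,\pi])\subset C_0[0,\pi]$ and $P(C_0[0,\pi])\subset C_0[0,\pi]$ give $(P\circ Q)(C_0[0,\pi])\subset C_0[0,\pi]$), so that $(P\circ Q)^\#$ is a well-defined operator on $C(\Sp^k)$; thus both sides of \eqref{krzyzyki} make sense.

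Next, for $\xi\in\Sp^k\setminus\{\1^k,-\1^k\}$, I would write $\Phi_k^{-1}(\xi)=(t,y)\in(0,\pi)\times\Sp^{k-1}$ and compute both sides directly. On the one hand $(P\circ Q)^\# f(\xi) = \big((P\circ Q)\otimes\mathbf I\,(f\circ\Phi_k)\big)(t,y)$, which by \eqref{match} equals $\big((P\otimes\mathbf I)\circ(Q\otimes\mathbf I)(f\circ\Phi_k)\big)(t,y)$. On the other hand, to evaluate $P^\#(Q^\# f)(\xi)$ I need to recognize that $(Q^\# f)\circ\Phi_k = (Q\otimes\mathbf I)(f\circ\Phi_k)$ as functions on $[0,\pi]\times\Sp^{k-1}$; this holds on the open part $(0,\pi)\times\Sp^{k-1}$ straight from Definition \ref{krzyzyk}, and the values at $\theta=0$ and $\theta=\pi$ agree with $Q^\#f(\1^k)$ and $Q^\#f(-\1^k)$ precisely by the way the poles are defined in Definition \ref{krzyzyk}. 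Granting this, $P^\#(Q^\# f)(\xi) = \big((P\otimes\mathbf I)((Q^\#f)\circ\Phi_k)\big)(t,y) = \big((P\otimes\mathbf I)(Q\otimes\mathbf I)(f\circ\Phi_k)\big)(t,y)$, matching the first computation.

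Finally I would handle the two poles $\xi=\1^k$ and $\xi=-\1^k$ separately, using the corresponding cases of Definition \ref{krzyzyk}: for instance $(P\circ Q)^\# f(\1^k) = \big((P\circ Q)\otimes\mathbf I\,(f\circ\Phi_k)\big)(0,\1^{k-1})$, and by \eqref{match} and the identity $(Q^\#f)\circ\Phi_k=(Q\otimes\mathbf I)(f\circ\Phi_k)$ this equals $\big((P\otimes\mathbf I)((Q^\#f)\circ\Phi_k)\big)(0,\1^{k-1}) = P^\#(Q^\#f)(\1^k)$; the case $-\1^k$ is symmetric with $0$ replaced by $\pi$. Since the two operators agree at every point of $\Sp^k$, \eqref{krzyzyki} follows.

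The only slightly delicate point — and the step I expect to be the main obstacle — is verifying cleanly that $(Q^\#f)\circ\Phi_k = (Q\otimes\mathbf I)(f\circ\Phi_k)$ \emph{everywhere} on the closed cylinder, including the degenerate fibers $\{0\}\times\Sp^{k-1}$ and $\{\pi\}\times\Sp^{k-1}$ where $\Phi_k$ is not injective. This is exactly where condition \eqref{zero} is used: it is what makes $(Q\otimes\mathbf I)(f\circ\Phi_k)$ constant on those fibers (as shown in the proof of Lemma \ref{oznaka}), so that its value there is unambiguously the pole value assigned in Definition \ref{krzyzyk}. Once this bookkeeping is in place, the rest is a mechanical application of \eqref{match}.
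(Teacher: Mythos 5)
Your proof is correct, and it takes a slightly different route than the paper. You prove the pointwise conjugation identity $(Q^\# f)\circ\Phi_k=(Q\otimes\mathbf I)(f\circ\Phi_k)$ on the \emph{entire} closed cylinder $[0,\pi]\times\Sp^{k-1}$ --- correctly identifying that the only delicate point is the degenerate fibers $\{0\}\times\Sp^{k-1}$ and $\{\pi\}\times\Sp^{k-1}$, where condition \eqref{zero} (via the argument in the proof of Lemma \ref{oznaka}) guarantees that $(Q\otimes\mathbf I)(f\circ\Phi_k)$ is constant, so that it agrees with the pole values prescribed in Definition \ref{krzyzyk} --- and then you conclude by a direct application of \eqref{match} at every point, including the poles. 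The paper instead avoids this fiber bookkeeping for general $f$: it observes that \eqref{krzyzyki} follows from \eqref{match} and Definition \ref{krzyzyk} when $f$ vanishes at the poles, checks it separately for the two explicit functions $p\circ\Phi_k^{-1}$ and $q\circ\Phi_k^{-1}$ (with $p(t,y)=\tfrac{\pi-t}{\pi}$, $q(t,y)=\tfrac t\pi$ from the proof of Lemma \ref{oznaka}), and then writes an arbitrary $f\in C(\Sp^k)$ as a linear combination of these two functions and a function vanishing at the poles. Both arguments rest on the same two ingredients, \eqref{match} and condition \eqref{zero}; yours is more explicit and self-contained at the level of pointwise verification, while the paper's decomposition trick trades that verification for a short linearity argument. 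Either way the lemma is established, and your identification of where \eqref{zero} enters is exactly right.
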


\begin{proof}
By \eqref{match} and Definition \ref{krzyzyk} the formula \eqref{krzyzyki}  holds for continuous functions $f$ on $\Sp^k$ which vanish on poles. Let $p$ and $q$ be as in the proof of Lemma \ref{oznaka}. Likewise, \eqref{krzyzyki} holds for $p\circ \Phi_k^{-1}$ and $q\circ \Phi_k^{-1}$. Since any function $f$ on $\Sp^k$ is a linear combination of $p\circ \Phi_k^{-1}$, $q\circ \Phi_k^{-1}$, and a function vanishing on poles, the  formula \eqref{krzyzyki} holds for all $f\in C(\Sp^k)$.
\end{proof}

For further reference let $\rho: C([0,\pi]) \to C([0,\pi])$ be a  reflection operator given by  
\[
\rho f (t) = f(\pi -t)\qquad \text{for } f\in C([0,\pi]). 
\]
Let $R = \rho\otimes \mathbf I$, where $\mathbf I$ is the  identity operator on $C(\Sp^{k-1})$.
Then  
\[
Rg(t,y)=g(\pi-t,y)\qquad \text{for }  g\in C([0,\pi]\times \Sp^{k-1}).
\]
By Definition \ref{krzyzyk} we have 
\[
\rho^\# f (\xi)= f(\xi_1,\ldots,\xi_k,-\xi_{k+1})\qquad \text{for } \xi=(\xi_1,\ldots,\xi_{k+1})\in \Sp^k, f\in C(\Sp^{k}).
\]

\begin{lemma}\label{l55}
  Fix  $k \geq 2$.  Let $L :C[0,\pi] \to C[0,\pi]$ be a continuous operator and  $\eta\in SO(k+1)$. Then,  
\begin{equation}\label{rho2}
T_\eta L^\#  T_{\eta^{-1}} = \begin{cases}  L^\# & \text{if } \eta(\1)=\1,
\\
\rho^\# L^\# \rho^\# & \text{if } \eta(\1)=-\1.
\end{cases}
\end{equation}
\end{lemma}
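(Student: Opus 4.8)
The plan is to reduce the statement to the elementary fact that a latitudinal operator commutes with the rotations of $\Sp^k$ that fix the north pole, and then to handle the pole-reversing case by factoring $\eta$ through the reflection $\rho^\#$. Throughout I assume $L$ satisfies \eqref{zero}, so that $L^\#$ is defined by Definition \ref{krzyzyk} and maps $C(\Sp^k)$ into itself by Lemma \ref{oznaka}. The first step is to read off the block structure of $\eta$: an orthogonal $\eta$ fixing $\1^k=e_{k+1}$ must preserve $\spa(e_1,\dots,e_k)$, so $\eta=\mathrm{diag}(A,1)$ with $A\in O(k)$ (in fact $A\in SO(k)$); and if $\eta(\1^k)=-\1^k$ then similarly $\eta=\mathrm{diag}(A,-1)$ with $A\in O(k)$, $\det A=-1$, in which case $\eta=\zeta r$ where $\zeta=\mathrm{diag}(A,1)$ and $r=\mathrm{diag}(I_k,-1)$. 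Note that $r^{-1}=r$, that $\zeta$ and $r$ commute as matrices, and that the formula for $\rho^\#$ recorded before the lemma shows $T_r f(\xi)=f(r\xi)=\rho^\#f(\xi)$.

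The key step is the identity $T_\zeta L^\# T_{\zeta^{-1}}=L^\#$ for every $\zeta=\mathrm{diag}(A,1)$ with $A\in O(k)$. Here one uses that such a $\zeta$ acts, through $\Phi_k$, only on the $\Sp^{k-1}$ factor: $\zeta\,\Phi_k(\theta,\xi)=(A\xi\sin\theta,\cos\theta)=\Phi_k(\theta,A\xi)$, so $\Phi_k^{-1}\circ\zeta\circ\Phi_k$ is the map $\Theta_A\colon(\theta,\xi)\mapsto(\theta,A\xi)$ on $(0,\pi)\times\Sp^{k-1}$. Since $(L\otimes\mathbf I)g(t,y)=L(g(\cdot,y))(t)$ touches only the first coordinate, one checks immediately from the definitions that $(L\otimes\mathbf I)(g\circ\Theta_A)=\big((L\otimes\mathbf I)g\big)\circ\Theta_A$ for all $g\in C([0,\pi]\times\Sp^{k-1})$. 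Feeding this into Definition \ref{krzyzyk} (and using $T_{\zeta^{-1}}g=g\circ\zeta$) gives $T_\zeta L^\# T_{\zeta^{-1}}f=L^\#f$ on $\Sp^k\setminus\{\pm\1^k\}$; since both operators send $f$ into $C(\Sp^k)$, the identity extends to the two poles by density.

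It remains to assemble. When $\eta(\1^k)=\1^k$ we apply the key step with $\zeta=\eta$ and obtain the first line of \eqref{rho2}. When $\eta(\1^k)=-\1^k$, writing $\eta=\zeta r$ and using $T_{ab}=T_aT_b$ together with $T_r=\rho^\#$ and $r^{-1}=r$ gives $T_\eta L^\# T_{\eta^{-1}}=T_\zeta T_r L^\# T_r T_{\zeta^{-1}}=T_\zeta\big(\rho^\#L^\#\rho^\#\big)T_{\zeta^{-1}}$. Since $\zeta$ and $r$ commute as matrices, $T_\zeta$ commutes with $\rho^\#=T_r$; by the key step it commutes with $L^\#$; hence it commutes with $\rho^\#L^\#\rho^\#$, and we obtain $T_\eta L^\# T_{\eta^{-1}}=\rho^\#L^\#\rho^\#$, which is the second line of \eqref{rho2}.

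I do not expect a serious obstacle: the argument is bookkeeping with $\Phi_k$. The one place needing care is the key step, where both halves of the conjugation $T_\zeta(\cdot)T_{\zeta^{-1}}$ must be carried along — the rotation by $A$ of the $\Sp^{k-1}$-coordinate produced on one side is exactly undone by the $A^{-1}$ on the other — and where one must not forget to verify the identity at $\pm\1^k$, which is precisely why $L^\#$ is arranged to land in $C(\Sp^k)$ so that a density argument closes the gap.
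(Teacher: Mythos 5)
Your proof is correct, and it is organized differently from the paper's. The paper treats the antipodal case $\eta(\1)=-\1$ by one direct coordinate computation: it writes $\eta(\xi)=\Phi_{k}(\pi-t,Cy)$ with $C\in O(k)$, pushes the conjugation through $\Phi_k$ using that both $R=\rho\otimes\mathbf I$ and $L\otimes\mathbf I$ act only on the latitude variable, arrives at $(\rho L\rho\otimes\mathbf I)$, and then invokes the composition rule $(\rho L\rho)^\#=\rho^\#L^\#\rho^\#$ from Lemma \ref{krzyzyki0}; the case $\eta(\1)=\1$ is dispatched as "similar." You instead isolate as the key step the equivariance $T_\zeta L^\# T_{\zeta^{-1}}=L^\#$ for all $\zeta=\mathrm{diag}(A,1)$, $A\in O(k)$ (the same underlying computation, namely that $L\otimes\mathbf I$ commutes with $(\theta,\xi)\mapsto(\theta,A\xi)$), and then get the antipodal case purely algebraically from the factorization $\eta=\zeta r$, $r=\mathrm{diag}(I_k,-1)$, together with the observation $T_r=\rho^\#$ and the commutation $\zeta r=r\zeta$. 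This bypasses Lemma \ref{krzyzyki0} entirely, since $\rho^\#$ enters as a genuine rotation-type operator $T_r$ rather than through the $\#$-functoriality; it is arguably cleaner, at the cost of the small extra argument at the poles (agreement on the dense set $\Sp^k\setminus\{\pm\1^k\}$ plus continuity), which the paper's pointwise computation does not need to make explicit. Two minor remarks: you correctly allow $A\in O(k)$ rather than $SO(k)$ in the key step — this is essential, since $\det A=-1$ when $\eta\in SO(k+1)$ reverses the poles — and your standing assumption that $L$ satisfies \eqref{zero} is not in the statement of the lemma but is harmless; the paper's own proof also needs it implicitly (both to have $L^\#$ land in $C(\Sp^k)$ and to apply Lemma \ref{krzyzyki0}), and it holds in every application.
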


\begin{proof}
Suppose that $\eta(\1)=-\1$. Then $\eta$ is a block diagonal matrix with two blocks:  $C\in O(k)$ and $-1$ in the last diagonal entry. 
Hence, for parametrization  $\xi=\Phi_{k}(t,y)$ of sphere $\Sp^k$,
we have  $\eta(\xi)=\Phi_{k}(\pi-t,C y)$ for a certain matrix $C\in O(k)$.
Consequently
\begin{equation*}\label{iff12}
\eta^{-1} (\xi)=\Phi_{k}(\pi-t,C^{-1}y).
\end{equation*}
Take $f\in C(\Sp^k)$. Letting  $g=f\circ \Phi_{k}$,  we have
\begin{equation*}\label{iff14}
T_\eta f(\xi)=f(\eta^{-1}\xi)=g(\pi-t,C^{-1}y).
\end{equation*}
Let  $g_\eta=T_{\eta^{-1}} f\circ \Phi_{k}$. Then we have
\[
\begin{aligned}
T_\eta \left(L^\#  T_{\eta^{-1}}f \right)(\xi) &= L^\#  T_{\eta^{-1}}(f)\Phi_{k}(\pi-t,C^{-1}y)
\\
&=L\otimes \mathbf I  (T_{\eta^{-1}}f \circ \Phi_{k})(\pi-t,C^{-1}y) 
=R(L\otimes \mathbf I)(g_\eta)(t,C^{-1}y).
\end{aligned}
\]
Since $g_\eta(t',y')=R g(t',C y')$ and operators $R$ and $L\otimes \mathbf I$ act only on the first variable $t$, we have
\[
(L\otimes \mathbf I)(g_\eta)(t,C^{-1}y)=
L(g_\eta(\cdot,C^{-1}y))(t)
=L(Rg(\cdot,y))(t) = (L\otimes \mathbf I)R g(t,y).
\]
Therefore, $R = \rho\otimes \mathbf I$ yields
\begin{equation*}\label{dolematu}
T_\eta L^\#  T_{\eta^{-1}}f (\xi)= R(L\otimes \mathbf I)(R g)(t,y) = (\rho L\rho \otimes \mathbf I)g(t,y).
\end{equation*}
Hence, by Definition \ref{krzyzyk} and Lemma \ref{krzyzyki0} 
\[
T_\eta L^\#  T_{\eta^{-1}}f (\xi) = (\rho L\rho)^\# f(\xi) = \rho^\# L^\# \rho^\# f(\xi).
\]
In the case $\eta(\1)=\1$, the proof follows similar arguments using a representation  $\eta(\xi)=\Phi_{k}(t,C y)$ for a certain matrix $C\in SO(k)$.
\end{proof}

\begin{corollary}\label{symetria} Fix  $k \geq 2$. Let $L :C[0,\pi] \to C[0,\pi]$ be a continuous operator which satisfies condition \eqref{zero}.
Let $K = L - \rho L \rho$.   Then for $f\in C(\Sp^k)$ and $\xi\in \Sp^k$,
\[
{\calS}(K^\#) f(\xi)= \int_{SO(k+1)} T_b K^\# T_{b^{-1}}f(\xi) d\mu_{k+1}(b)=0.
\]
\end{corollary}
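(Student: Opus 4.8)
The plan is to reduce Corollary \ref{symetria} to Lemma \ref{l55} by splitting the integral over $SO(k+1)$ according to whether a rotation sends the North Pole to the North Pole or to the South Pole, and then observing that these two cases contribute opposite quantities which cancel.

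First I would note that $K = L - \rho L \rho$ still satisfies condition \eqref{zero}, since both $L$ and $\rho L \rho$ map $C_0[0,\pi]$ into itself ($\rho$ clearly preserves $C_0[0,\pi]$). Hence $K^\#$ is a well-defined continuous operator on $C(\Sp^k)$ by Lemma \ref{oznaka}, and its Marcinkiewicz average makes sense. Next, the key algebraic observation: applying Lemma \ref{l55} to the operator $K$, we get that $T_b K^\# T_{b^{-1}}$ equals $K^\#$ when $b(\1)=\1$ and equals $\rho^\# K^\# \rho^\#$ when $b(\1)=-\1$. But $\rho^\# K^\# \rho^\# = (\rho K \rho)^\# = (\rho L \rho - L)^\# = -K^\#$ by Lemma \ref{krzyzyki0} and linearity of the $\#$ construction, using $\rho^2 = \mathrm{id}$. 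So $T_b K^\# T_{b^{-1}}$ is $+K^\#$ on the subset of $SO(k+1)$ fixing the North Pole and $-K^\#$ on the subset swapping the two poles.

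The main point is then that these two subsets have equal Haar measure. I would argue this by choosing any fixed $\eta_0 \in SO(k+1)$ with $\eta_0(\1) = -\1$ (for instance the block-diagonal matrix with a $2\times 2$ rotation by $\pi$ in the last two coordinates and the identity elsewhere); then $b \mapsto \eta_0 b$ is a measure-preserving bijection from $\{b : b(\1)=\1\}$ onto $\{b : b(\1)=-\1\}$, since these are, respectively, a closed subgroup $H \cong SO(k)$ and its coset $\eta_0 H$, and left translation preserves the Haar measure $\mu_{k+1}$. Note also that the set of $b$ with $b(\1) \notin \{\1,-\1\}$ — this is all of $SO(k+1)$ outside $H \cup \eta_0 H$ — has full measure, but that is irrelevant here because Lemma \ref{l55} only covers the two special cases; what actually matters is subtler. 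Let me reconsider: Lemma \ref{l55} as stated only evaluates $T_b K^\# T_{b^{-1}}$ when $b(\1) = \pm\1$, so I cannot simply split the integral into two pieces of equal measure and ignore the rest.

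The correct route is to use the rotation invariance of the Haar integral directly, in the spirit of the proof of Theorem \ref{uu}. The cleanest argument: $\calS(K^\#)$ is rotation-invariant, i.e. $T_b \calS(K^\#) T_{b^{-1}} = \calS(K^\#)$ for every $b$ (by the invariance of $\mu_{k+1}$ under $b' \mapsto b b'$), hence by the characterization of Marcinkiewicz averages (the Theorem following Definition \ref{Marcin}, applied on $L^2(\Sp^k)$) it is a multiplier operator, in particular commutes with $\rho^\#$ when $\rho^\#$ is viewed as $T_{\eta_0}$ with $\eta_0$ as above. Then $\calS(K^\#) = T_{\eta_0}\calS(K^\#)T_{\eta_0^{-1}} = \calS(T_{\eta_0} K^\# T_{\eta_0^{-1}}) = \calS(\rho^\# K^\# \rho^\#) = \calS(-K^\#) = -\calS(K^\#)$, where the middle equality invokes Lemma \ref{l55} with $\eta = \eta_0$ and linearity of $\calS$. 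Therefore $\calS(K^\#) = 0$, which evaluated at $(\xi,f)$ gives the claim. I expect the main obstacle to be making the appeal to the multiplier characterization rigorous in $C(\Sp^k)$ rather than $L^2$, or equivalently justifying the identity $\calS(T_{\eta_0} P T_{\eta_0^{-1}}) = T_{\eta_0}\calS(P)T_{\eta_0^{-1}}$ for the Pettis/Bochner integral — this is a routine change of variables in the Haar integral combined with continuity of $b \mapsto T_b P T_{b^{-1}} f$, but it should be stated carefully.
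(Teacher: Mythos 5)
Your final chain is correct and is essentially the paper's own proof: the paper likewise takes $\eta\in SO(k+1)$ with $\eta(\1)=-\1$, invokes Lemma \ref{l55} to write $\rho^\# L^\# \rho^\# = T_\eta L^\# T_{\eta^{-1}}$, and concludes from the invariance of $\mu_{k+1}$ that $\calS(\rho^\# L^\# \rho^\#)=\calS(L^\#)$, hence $\calS(K^\#)=\calS(L^\#)-\calS(\rho^\# L^\# \rho^\#)=0$ --- the same two ingredients you use, merely applied to $L^\#$ rather than to $K^\#$ via the antisymmetry $\rho K\rho=-K$. The only blemishes are harmless: the detour through the multiplier characterization is superfluous (and $\rho^\#$ is not $T_{\eta_0}$ for any $\eta_0\in SO(k+1)$, being induced by a reflection of determinant $-1$; Lemma \ref{l55} is precisely the substitute for this identification), and since the corollary is stated pointwise the needed interchange is just a scalar change of variables in the Haar integral, as you note.
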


\begin{proof}
Take any   $\eta\in SO(k+1)$  such that  $\eta(\1)=-\1$. By Lemma \ref{l55} we have
\begin{equation}\label{rho}
\rho^\# L^\# \rho^\#=T_\eta L^\#  T_{\eta^{-1}}
\end{equation}
Then, the invariance of measure $\mu_{k+1}$ yields
\[
\calS(\rho^\# L^\# \rho^\#)=\calS(T_\eta  L^\# T_{\eta^{-1}})=\calS( L^\#). \qedhere
\]
\end{proof}


 Let $\vartheta, \delta $ be such that $0< \vartheta - \delta < \vartheta + \delta < \pi$. Define
$$
L_\vartheta f(t) = s(t-\vartheta)s(\vartheta - t) \left( \frac{ \sin(2 \vartheta - t )}{\sin t} \right)^{(k-1)/2}
f(2\vartheta -t).
$$
It can be checked by a direct calculation that
$$
L_{\pi - \vartheta} = \rho L_\vartheta \rho.
$$
Next, for $0 < \vartheta < \pi/2$ and suitable $\delta>0$, define function $\psi_\vartheta$ by formula
\[
\psi_\vartheta(t) = \begin{cases} 0 &   t < \vartheta - \delta , 
\cr
s^2(t-\vartheta) &   t \in [\vartheta-\delta, \vartheta + \delta] ,
\cr 1 & t \in (\vartheta  + \delta, \pi - \vartheta - \delta) ,
\cr s^2(\pi - \vartheta - t) & t \in [\pi - \vartheta-\delta, \pi -\vartheta + \delta] ,
\cr 0 &  t > \pi - \vartheta + \delta. 
\cr
\end{cases}
\]
Define
$$
P_\vartheta   = M_{\psi_\vartheta} + L_\vartheta - L_{\pi - \vartheta} = M_{\psi_\vartheta} + L_\vartheta - \rho L_{\vartheta} \rho,
$$
where $M_{\psi_\vartheta} (f)=\psi_\vartheta f$ denotes the multiplication operator.

Next, observe that there is a function $\psi^\#_\vartheta \in C^\infty(\Sp^{k})$ such that
$$
( M_{\psi_\vartheta} )^\# = M_{\psi^\#_\vartheta}.
$$
Let
$$K_\vartheta = L_\vartheta - L_{\pi - \vartheta} =  L_\vartheta - \rho L_{\vartheta} \rho.$$
Define and operator $U: C(\Sp^k) \to C(\Sp^k)$ by
$$
U= P_\vartheta ^\# = ( M_{\psi_\vartheta} )^\# + K_\vartheta^\# = M_{\psi^\#_\vartheta} + K_\vartheta^\#.
$$

\begin{theorem}\label{operatorU}
Fix  $k \geq 2$.
Let $\vartheta, \delta $ be such that $0< \vartheta - \delta < \vartheta + \delta < \pi/2$.
Then, $U$ is a Hestenes operator localized on the latitudinal strip $\Phi_k((\th-\delta, \pi - \th +\delta) \times \mathbb S^{k-1})$, $U$ extends to an orthogonal projection on $L^2(\Sp^k)$, and  
\begin{equation}\label{sup}
\calS (U)f(\xi)=C(\psi^\#_\vartheta) f(\xi)
\qquad\text{for all } f\in C(\Sp^k), \ \xi \in \Sp^k.
\end{equation}
\end{theorem}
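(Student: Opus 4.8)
The strategy is to push the whole problem onto a single meridian by conjugating with a power of the latitude weight, which will identify $P_\vartheta$ with the flat smooth projection $P_{[\vartheta,\pi-\vartheta]}$ of \eqref{exp}. Concretely, set $H=L^2([0,\pi],\sin^{k-1}t\,dt)$ and let $W\colon H\to L^2([0,\pi],dt)$ be the unitary $Wg(t)=(\sin t)^{(k-1)/2}g(t)$. One computes $WP_\vartheta W^{-1}$ term by term: since $\psi_\vartheta$ is real and commutes with the weight, $WM_{\psi_\vartheta}W^{-1}=M_{\psi_\vartheta}$; and the exponent $(k-1)/2$ in the definition of $L_\vartheta$ is chosen so that the weight is absorbed, giving
\[
WL_\vartheta W^{-1}h(t)=s(t-\vartheta)s(\vartheta-t)h(2\vartheta-t)=R_\vartheta h(t),
\]
with $R_\alpha$ the operator of Section~\ref{S4}, and likewise $WL_{\pi-\vartheta}W^{-1}=R_{\pi-\vartheta}$. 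Applying \eqref{mrr} with $\alpha=\vartheta$, $\beta=\pi-\vartheta$ — for this pair the plateau function $m$ is exactly $\psi_\vartheta$, and the admissibility constraint $\delta<(\beta-\alpha)/2=\pi/2-\vartheta$ is precisely our hypothesis $\vartheta+\delta<\pi/2$ — yields $WP_\vartheta W^{-1}=M_{\psi_\vartheta}+R_\vartheta-R_{\pi-\vartheta}=P_{[\vartheta,\pi-\vartheta]}$. Since $P_{[\vartheta,\pi-\vartheta]}$ is an orthogonal projection on $L^2(\R)$ localized on $(\vartheta-\delta,\pi-\vartheta+\delta)\subset(0,\pi)$, it restricts to an orthogonal projection on $L^2([0,\pi],dt)$, and conjugating back shows $P_\vartheta$ is an orthogonal projection on $H$.

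Next I would transfer this to $\Sp^k$. The pullback $f\mapsto f\circ\Phi_k$ gives an isometric isomorphism $L^2(\Sp^k)\cong H\otimes L^2(\Sp^{k-1})$ (up to a harmless normalizing constant), since the surface measure disintegrates under $\Phi_k$ as $\sin^{k-1}t\,dt\otimes d\sigma_{k-1}$ and the poles form a null set; under this identification Definition~\ref{krzyzyk} makes $U=P_\vartheta^\#$ correspond to $P_\vartheta\otimes\mathbf I$. As $P_\vartheta$ is an orthogonal projection on $H$, the operator $P_\vartheta\otimes\mathbf I$ is an orthogonal projection on $H\otimes L^2(\Sp^{k-1})$, and since it is bounded and agrees with $U$ on the dense subspace $C(\Sp^k)$ it is the continuous extension of $U$; hence $U$ extends to an orthogonal projection on $L^2(\Sp^k)$. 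For localization, note that $M_{\psi_\vartheta}$, $L_\vartheta$, $L_{\pi-\vartheta}$ are simple $H$-operators on $\R$ with coefficients supported, respectively, in $[\vartheta-\delta,\pi-\vartheta+\delta]$, $[\vartheta-\delta,\vartheta+\delta]$, $[\pi-\vartheta-\delta,\pi-\vartheta+\delta]$, all contained in $(0,\pi)$ because $0<\vartheta-\delta$ and $\vartheta+\delta<\pi/2$; transplanting by the diffeomorphism $\Phi_k\colon(0,\pi)\times\Sp^{k-1}\to\Sp^k\setminus\{\1^k,-\1^k\}$ makes $U=M_{\psi_\vartheta^\#}+L_\vartheta^\#-L_{\pi-\vartheta}^\#$ a Hestenes operator whose support lies in the closed latitudinal strip and avoids both poles, hence localized on $\Phi_k((\vartheta-\delta,\pi-\vartheta+\delta)\times\Sp^{k-1})$ by the characterization \eqref{ilo1}--\eqref{ilo2}.

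It then remains to prove \eqref{sup}. By linearity of the Marcinkiewicz average, $\calS(U)=\calS(M_{\psi_\vartheta^\#})+\calS(K_\vartheta^\#)$. Lemma~\ref{perturbacja} gives $\calS(M_{\psi_\vartheta^\#})=C(\psi_\vartheta^\#)\mathbf I$ (legitimate since $\psi_\vartheta^\#\in C^\infty(\Sp^k)$), while Corollary~\ref{symetria}, applied with $L=L_\vartheta$ — which satisfies \eqref{zero}, and for which $K_\vartheta=L_\vartheta-\rho L_\vartheta\rho$ — gives $\calS(K_\vartheta^\#)=0$. Adding these yields \eqref{sup}.

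I expect the delicate step to be the weighted conjugation in the first paragraph: checking that the factor $\bigl(\sin(2\vartheta-t)/\sin t\bigr)^{(k-1)/2}$ built into $L_\vartheta$ is exactly what the weight $\sin^{k-1}t$ demands, so that $P_\vartheta$ is genuinely unitarily equivalent to $P_{[\vartheta,\pi-\vartheta]}$ — this single identity simultaneously delivers idempotency and self-adjointness of $U$. The remaining ingredients (disintegration of the surface measure, $L^2$-boundedness of Hestenes operators, linearity of $\calS$, and the pole-avoidance used for localization) are routine bookkeeping.
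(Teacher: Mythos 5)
Your argument is correct, and for the averaging identity \eqref{sup} it coincides exactly with the paper's proof: both split $U=M_{\psi^\#_\vartheta}+K_\vartheta^\#$ and invoke Lemma \ref{perturbacja} for the multiplication part and Corollary \ref{symetria} (with $L=L_\vartheta$, which satisfies \eqref{zero}) for the antisymmetric part. Where you genuinely diverge is in establishing that $U$ is a Hestenes operator extending to an orthogonal projection. The paper does not argue this directly: it writes $P_\vartheta=E_\vartheta-E_{\pi-\vartheta}$ in terms of the AWW operators $E_\vartheta$ of \cite{BD}, so $U=E_\vartheta^\#-E_{\pi-\vartheta}^\#$, and then cites \cite[Lemma 3.3]{BD} (each $E_\vartheta^\#$ is a Hestenes operator and an orthogonal projection on $L^2(\Sp^k)$) and \cite[Lemma 3.4]{BD} (the difference is again an orthogonal projection). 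You instead give a self-contained argument: the unitary $Wg=(\sin t)^{(k-1)/2}g$ from $L^2([0,\pi],\sin^{k-1}t\,dt)$ to $L^2([0,\pi],dt)$ intertwines $L_\vartheta$ with $R_\vartheta$ (the exponent $(k-1)/2$ is indeed exactly what the weight demands), so that by \eqref{mrr} one gets $WP_\vartheta W^{-1}=P_{[\vartheta,\pi-\vartheta]}$, whose projection property is already known from \eqref{exp}; the tensor decomposition $L^2(\Sp^k)\cong L^2([0,\pi],\sin^{k-1}t\,dt)\otimes L^2(\Sp^{k-1})$ then transfers this to $U=P_\vartheta^\#$. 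This is in spirit the computation underlying the construction of $E_\vartheta$ in \cite{BD}, so the two routes are close relatives; yours buys independence from the external lemmas of \cite{BD} and exhibits the unitary equivalence with the flat projection explicitly (giving idempotency and self-adjointness in one stroke), while the paper's citation route is shorter and also inherits the Hestenes-operator statement for each $E_\vartheta^\#$ without redoing the smoothness/localization bookkeeping that you carry out by hand in your second paragraph.
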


\begin{proof}
Let  $E_{\th}$ be an AWW operator from 
 \cite[Definition 3.4]{BD}, see also \cite[(3.5),(3.6)]{BD}. That is, for $g: [0,\pi] \to \C$ we define
\begin{equation}\label{1.7a}
E_{\th}(g)(t)=
\begin{cases}
g(\th)& t>\th +\delta,\\
0& t<\th-\delta.\\
\end{cases}
\end{equation} 
For $t\in[\th-\delta,\th+\delta]$ we define
  \begin{equation}\label{operatorE}
\begin{aligned}
E_{\th}(g)(t)
=&  s^2(t-\th)g(t)
\\&+ s(t-\th)s(\th-t)\left(\frac{\sin(2\th-t)}{\sin t}\right)^{(k-1)/2} g(2\th-t).
\end{aligned}
\end{equation}
The above formula also holds for $t$ outside of $[\th-\delta,\th+\delta]$, since $s(t-\th)s(\th-t)= 0$  and we can ignore the second term in \eqref{operatorE}. 

By \cite[ Lemma 3.3]{BD} the operator $(E_\th)^\# \in \mathcal H(\Sp^k)$ and $(E_\th)^\#$ extends to an orthogonal projection on $L^2(\Sp^k)$. Since 
$P_\th= E_{\th}-E_{\pi-\th}$, we have
\[
U=E_{\th}^\#-E_{\pi-\th}^\#.
\]
The fact that $U$ is an orthogonal projection follows from \cite[ Lemma 3.4]{BD}.
By Lemma \ref{perturbacja} and  Corollary \ref{symetria} we deduce \eqref{sup}.
\end{proof}

\section{Averages of smooth orthogonal projections on sphere}\label{S6}

In this section we complete a construction of a smooth orthogonal projection, which is localized on arbitrarily small ball, such that its average is a multiple of the identity operator. To achieve this we will use the lifting procedure \cite[Definition 4.1]{BD}.

\begin{definition}
For $k\geq 2$, let
\[
C_{0}(\Sp^k) = \{ f \in C( \Sp^{k}): f(\1^{k}) = 0 = f(- \1^{k})\}.
\]
Suppose that $T: C(\Sp^{k-1}) \to C(\Sp^{k-1})$.  We define the lifted operator
$\hat{T}: C_0(\Sp^{k}) \to  C_0(\Sp^{k}) $ 
using the relation
\begin{equation}
\label{1.10}
\hat{T}(f)(t,\xi)=
\begin{cases}
T(f^t)(\xi)& (t,\xi)\in (0,\pi)\times \mathbb S^{k-1},
\\
0 & t=0\quad \text{or}  \quad t=\pi.
\end{cases}
\end{equation}
where
\[
f^t(\xi)=f(t,\xi), \quad (t,\xi)\in (0,\pi)\times \mathbb S^{k-1} \approx \mathbb S^{k}\setminus \{\1^{k},-\1^{k}\}.
\]
\end{definition}
It is easy to verify from \eqref{1.10} that if $f\in C_0(\Sp^{k})$, then $Tf\in C_0(\Sp^{k})$.
Moreover, the operator norms of $T$ and $\hat{T}$ are the same.

For $P:C(\Sp^k) \to C(\Sp^k)$ denote
\[
\calS_{k} (P) f(\zeta) = \int_{SO(k+1)}  T_b\circ P \circ T_{b^{-1}}f(\zeta)  \, d\mu_{k+1}(b)\qquad \text{for } f\in C(\Sp^k), \ \zeta\in \Sp^k.
\]

\begin{lemma}\label{zlematu.b}
Let $k\geq 2$. Let $P:C(\Sp^{k-1}) \to C(\Sp^{k-1}) $ be a continuous linear operator
such that
\begin{equation}\label{eq.101}
\calS_{k-1} (P) h  = c(P) h\qquad \text{for } h \in C(\Sp^{k-1}).
\end{equation}
Let $L: C[0,\pi] \to C_0[0,\pi]$ be a continuous linear operator.
Then the composition operator
$$
 \hat{P} \circ L^{\#}: C(\Sp^{k}) \to C(\Sp^{k}),
$$
satisfies 
\begin{equation}
\label{eq.108}
\calS_{k} (\hat{P} \circ L^{\#}) f= c(P) \calS_{k} (L^{\#}) f\qquad \text{for  } f \in C(\Sp^{k}).
\end{equation}

\end{lemma}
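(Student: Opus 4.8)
The plan is to handle the $SO(k+1)$-average by first averaging over the stabilizer $H=\{\eta\in SO(k+1):\eta(\1^k)=\1^k\}\cong SO(k)$ of the north pole, which reduces matters to the fibre $\Sp^{k-1}$ where hypothesis \eqref{eq.101} applies. Two intertwining facts drive the argument. First, for $\eta\in H$ one has $T_\eta L^\# T_{\eta^{-1}}=L^\#$: this is the case $\eta(\1)=\1$ of Lemma \ref{l55}. Second, conjugating the lift $\hat P$ by $\eta\in H$ only conjugates $P$ on the fibre; precisely, writing $h^t(\xi)=h(\Phi_k(t,\xi))$ and letting $C_\eta\in SO(k)$ be the block of $\eta$ acting on the first $k$ coordinates, so that $\eta\cdot\Phi_k(t,\xi)=\Phi_k(t,C_\eta\xi)$, a direct unwinding of \eqref{1.10} gives $\bigl(T_\eta\hat P T_{\eta^{-1}}h\bigr)^t=T_{C_\eta}P\,T_{C_\eta^{-1}}h^t$ for $t\in(0,\pi)$, where $T_{C_\eta}$ now denotes the rotation acting on $C(\Sp^{k-1})$.

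I would begin by noting that $L^\#$ in fact maps $C(\Sp^k)$ into $C_0(\Sp^k)$: since $L$ takes values in $C_0[0,\pi]$, the value of $L^\#f$ at either pole is $L$ of a section evaluated at $0$ or $\pi$, hence $0$; in particular $\hat P\circ L^\#$ is a well-defined bounded operator on $C(\Sp^k)$ and all integrals below are Bochner integrals in the Banach space $C(\Sp^k)$, for which the usual interchange properties hold. Setting $G(b)=T_b(\hat P\circ L^\#)T_{b^{-1}}f$, right-invariance of $\mu_{k+1}$ followed by averaging over $\eta\in H$ (normalized Haar measure, identified with $\mu_k$ on $SO(k)$) and Fubini gives
\[
\calS_k(\hat P\circ L^\#)f=\int_{SO(k+1)}G(b)\,d\mu_{k+1}(b)=\int_{SO(k+1)}\Bigl(\int_{H}G(b\eta)\,d\mu_k(\eta)\Bigr)d\mu_{k+1}(b).
\]
By the first fact, $G(b\eta)=T_b\bigl(T_\eta\hat P T_{\eta^{-1}}\bigr)\bigl(T_\eta L^\# T_{\eta^{-1}}\bigr)T_{b^{-1}}f=T_b\bigl(T_\eta\hat P T_{\eta^{-1}}\bigr)L^\# T_{b^{-1}}f$, and since $g:=L^\# T_{b^{-1}}f$ is a fixed element of $C_0(\Sp^k)$, the inner integral is $T_b\bigl(\int_H T_\eta\hat P T_{\eta^{-1}}\,d\mu_k(\eta)\bigr)g$.

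The heart of the proof is then the identity $\int_H T_\eta\hat P T_{\eta^{-1}}\,d\mu_k(\eta)=c(P)\operatorname{id}$ on $C_0(\Sp^k)$. Indeed, for $t\in(0,\pi)$ the second fact and the observation that $\eta\mapsto C_\eta$ pushes the normalized Haar measure of $H$ forward to that of $SO(k)$ yield
\[
\Bigl(\int_H T_\eta\hat P T_{\eta^{-1}}g\,d\mu_k(\eta)\Bigr)^t=\int_{SO(k)}T_C P\,T_{C^{-1}}\,g^t\,d\mu_k(C)=\calS_{k-1}(P)\,g^t=c(P)\,g^t,
\]
the last step being \eqref{eq.101}; at the two poles both sides vanish, since $\hat P$ takes values in $C_0(\Sp^k)$, $\eta$ fixes the poles, and $g\in C_0(\Sp^k)$. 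Hence the inner integral equals $c(P)\,T_b L^\# T_{b^{-1}}f$, and integrating over $b\in SO(k+1)$ gives $\calS_k(\hat P\circ L^\#)f=c(P)\,\calS_k(L^\#)f$, as claimed.

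The work is essentially bookkeeping rather than a deep point: one must verify that the block map $\eta\mapsto C_\eta$ is measure-preserving, so that the fibrewise average really is $\calS_{k-1}(P)$; justify the Fubini interchange and the passage of the bounded operator $T_b$ through the $\eta$-integral; and keep the pole values under control throughout, since $\hat P$ is only defined on $C_0(\Sp^k)$ and it is exactly the hypothesis $L:C[0,\pi]\to C_0[0,\pi]$ that keeps $L^\# T_{b^{-1}}f$ in that subspace.
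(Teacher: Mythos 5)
Your argument is correct, and its computational core coincides with the paper's: both proofs rest on the stabilizer identity $\int_H T_\eta\,\hat P\,T_{\eta^{-1}}\,d\mu_k(\eta)=c(P)\,\mathrm{id}$ on $C_0(\Sp^k)$ (the paper's \eqref{stab}), proved fibrewise via the block decomposition of $\eta\in H$ together with hypothesis \eqref{eq.101}, and on the $H$-invariance $T_\eta L^\# T_{\eta^{-1}}=L^\#$ from Lemma \ref{l55}. Where you differ is in how the $SO(k+1)$-average is reduced to this stabilizer average: the paper invokes Weyl's quotient integration formula \eqref{weyl} with a continuous selector $x\mapsto b_x$ of coset representatives, applies \eqref{stab}, and then must apply \eqref{rho2} and Weyl's formula a second time to reassemble the remaining $\Sp^k$-integral into $\calS_k(L^\#)f$; you instead insert the inner $H$-average directly via right-invariance of $\mu_{k+1}$ and Fubini, so that after the stabilizer identity the outer integral is already $\calS_k(L^\#)f$ and no reassembly is needed. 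Your route avoids the selector and the quotient formula entirely and is arguably cleaner, at the modest cost of having to justify the vector-valued Fubini interchange and the continuity of $b\mapsto T_b(\hat P\circ L^\#)T_{b^{-1}}f$ (you flag both, and they are routine since $f$ is uniformly continuous and the operators are bounded); the paper's route keeps all integrals pointwise scalar integrals, so no Bochner-integral machinery is needed. Your preliminary observations — that $L^\#$ maps $C(\Sp^k)$ into $C_0(\Sp^k)$ so that $\hat P\circ L^\#$ is well defined, and that both sides of the stabilizer identity vanish at the poles — are points the paper uses implicitly, and spelling them out is a genuine improvement in completeness.
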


\begin{proof} 
Let $G=SO(k+1)$, $H=\{b\in SO(k+1): b(\1)=\1 \}\subset SO(k+1)$. We can identify $G/H=\Sp^k$.
For $x \in \Sp^{k}\setminus \{\1^k,-\1^k\}$, let $b_x \in SO(k+1)$ be  a rotation in the plane spanned by
$\{\1,x\}$ such that  $b_x(\1)=x$. 
Note that $b_\cdot$ is a continuous selector of coset representatives of $G/H$, 
\[
x\in \Sp^{k}\setminus \{\1^k,-\1^k\} \to b_x\in SO(k+1).
\]
Let $\sigma_k$ be a normalized Lebesgue measure on $\Sp^k$. 
By Weyl's formula \cite[Theorem 2.51]{Folland} for any $F\in C(G)$, we have
\begin{equation}\label{weyl}
\int_{SO(k+1)} F d\mu_{k+1} = \int_{\Sp^k} \int_{H} F(b_x a)  d\mu_{k}(a) d\sigma_{k}(x),
\end{equation}
where $\mu_k$ is a normalized Haar measure on $SO(k)$, which can be identified with $H$. That is, any $a\in H$ is a block diagonal matrix with two blocks:  $a' \in SO(k)$ and $1$ in the last diagonal entry.  
 
We claim that for  $h\in C_0(\Sp^{k})$ we have
\begin{equation}\label{stab}
\int_{H}    \left( T_a \circ \hat{P} \circ T_{a^{-1}} \right)h(\zeta) d\mu_k(a)=c(P) h(\zeta),\qquad \zeta\in \Sp^k.
\end{equation}
Since $T_a( C_0(\Sp^k))  \subset C_0(\Sp^k)$ for all $a\in H$, the formula \eqref{stab} holds trivially for $\zeta= \1^k,-\1^k$. Otherwise, any $\zeta \in \Sp^{k}\setminus \{\1^k,-\1^k\}$ can be identified with $(t,\xi) \in (0,\pi) \times \Sp^{k-1}$ through diffeomorphism $\Phi_k$. Hence, for any $(t,\xi) \in (0,\pi) \times \Sp^{k-1}$ we have
\[
\begin{aligned}
\int_{H}    \left( T_a \circ \hat{P} \circ T_{a^{-1}} \right)h(t,\xi) d\mu_k(a) & = \int_H P ((T_{a^{-1}} h)^t)((a')^{-1}\xi) d\mu_k(a)
\\
&= \int_{SO(k)} P (T_{(a')^{-1}} h^t)((a')^{-1}\xi) d\mu_k(a') 
\\
&= \int_{SO(k)} \left( T_{a'} \circ P \circ T_{(a')^{-1}} \right) h^t(\xi) d\mu_k(a')  = c(P) h(t,\xi).
\end{aligned}
\]
The last equality is a consequence of the assumption \eqref{eq.101}. Hence, \eqref{stab} holds.

Let $f\in C(\Sp^k)$  and $\zeta\in\Sp^{k}$. By \eqref{weyl} we have
\begin{equation*}\label{eq.103}
\begin{aligned}
\calS_{k}(\hat{P} \circ L^{\#})f(\zeta)
&=\int_{\Sp^k} \int_{H} T_{b_x a}\circ \hat{P} \circ L^{\#} \circ T_{(b_x a)^{-1}} f(\zeta)  d\mu_{k}(a) d\sigma_{k}(x)
\\
&=\int_{\Sp^{k}} \int_{H} T_{b_x}T_{a}\circ \hat{P} \circ T_{a^{-1}} T_{a} \circ L^{\#} \circ T_{a^{-1}}T_{b_x^{-1}}f(\zeta)  d\mu_{k}(a) d\sigma_{k}(x).
\end{aligned}
\end{equation*}
By \eqref{rho2} the above equals
\begin{align*}
& \int_{\Sp^{k}} \int_{H}  T_{b_x}  \left( T_a \circ \hat{P}\circ T_{a^{-1}} \right) L^\# T_{b_x^{-1}} f(\zeta)  d\mu_{k}(a) d\sigma_{k}(x)\\
&=  \int_{\Sp^{k}} \int_{H}    \left( T_a \circ \hat{P} \circ T_{a^{-1}} \right) L^\# T_{b_x^{-1}} f(b_x^{-1}\zeta)  d\mu_{k}(a) d\sigma_{k}(x).
\end{align*}
Hence, by \eqref{stab} 
\begin{align*}
 \calS_{k}(\hat{P} \circ L^{\#})f(\zeta) = & c(P) \int_{\Sp^{k}}     (   L^\# T_{b_x^{-1}} f) (b_x^{-1}\zeta)  d\sigma_{k}(x)
\\
= &
c(P)  \int_{\Sp^{k}}  T_{b_x} \circ    L^\#  \circ T_{b_x^{-1}} f (\zeta)  d\sigma_{k}(x).
\end{align*}
Applying again \eqref{rho2} and \eqref{weyl} yields
\[
 \calS_{k}(\hat{P} \circ L^{\#})f(\zeta) =
c(P)  \int_{\Sp^{k}}  \int_H T_{b_x}  T_a \circ    L^\#  \circ T_{a^{-1}} T_{b_x^{-1}} f (\zeta)  d\sigma_{k}(x)
=
c(P) \calS_k(L^\#)f(\zeta).
\qedhere 
\] 
\end{proof}

By the lifting lemma on the sphere \cite[Lemma 4.1]{BD}, or its generalization on Riemannian manifolds \cite[Lemma 5.3]{BDK}, we have the  following result.

\begin{lemma}
\label{zlematu.v}
Let $k\geq 2$.
Let $\vartheta, \delta $ be such that $0< \vartheta - \delta < \vartheta + \delta < \pi/2$.
 Let $U$ be a latitudinal orthogonal projection as in Theorem \ref{operatorU}.
Let   $P_{Q}$ be a Hestens operator on $\Sp^{k-1}$, which is localized on an open subset 
$Q \subset \Sp^{k-1}$, such that it induces an orthogonal projection on $L^2(\Sp^{k-1})$.
Then 
\[
P_{\Omega}=\hat{P_{Q}} \circ U=  U  \circ \hat{P_{Q}}
\]
is a Hestens operator on $\Sp^k$, which is localized on $\Omega=\Phi_k( (\vartheta-\delta, \pi-\vartheta+\delta)\times Q )$, and it induces an orthogonal projection on $L^2(\Sp^k)$.
\end{lemma}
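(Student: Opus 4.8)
The plan is to combine three ingredients already developed in the paper: the factorization property \eqref{match} of the $\otimes \mathbf I$ construction, the compatibility \eqref{krzyzyki} of the latitudinal lift with composition, and the lifting lemma of \cite{BD} (or \cite{BDK}) which transplants orthogonal projections on $\Sp^{k-1}$ to projections on $\Sp^k$ while preserving localization. Concretely, $U = E_\vartheta^\# - E_{\pi-\vartheta}^\#$ is, by Theorem \ref{operatorU}, an orthogonal projection on $L^2(\Sp^k)$ localized on the latitudinal strip $\Phi_k((\vartheta-\delta,\pi-\vartheta+\delta)\times\Sp^{k-1})$, while $\widehat{P_Q}$ is the lift of the projection $P_Q$ on $\Sp^{k-1}$; I would quote the lifting lemma \cite[Lemma 4.1]{BD} to record that $\widehat{P_Q}$ is a Hestenes operator on $\Sp^k$ inducing an orthogonal projection on $L^2(\Sp^k)$ and localized on $\Phi_k((0,\pi)\times Q)$ (after removing the poles, which is harmless since $Q$ is a proper open subset).

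First I would establish the commutation $\widehat{P_Q}\circ U = U\circ\widehat{P_Q}$. The point is that $U$ acts only in the latitude variable $t$ (it is built from $M_{\psi_\vartheta^\#}$ and the latitudinal operators $L_\vartheta^\#$, all of the form ``operate on $t$, leave $\xi$ alone''), whereas $\widehat{P_Q}$ acts fiberwise: on $\Sp^k\setminus\{\1^k,-\1^k\}\cong(0,\pi)\times\Sp^{k-1}$ it sends $f(t,\cdot)\mapsto P_Q(f(t,\cdot))$ with no mixing of $t$. Two such operators commute; this is the natural ``$L\otimes\mathbf I$ versus $\mathbf I\otimes P$'' tensor-type identity, and I would phrase it using \eqref{match} together with the explicit formulas for $U$ and $\widehat{P_Q}$ from the definitions. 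I would also note both compositions still send $C_0(\Sp^k)$ to itself (so the lift is well-defined) and extend boundedly to $L^2$.

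Next, since $U$ and $\widehat{P_Q}$ are commuting orthogonal projections on the Hilbert space $L^2(\Sp^k)$, their product $P_\Omega = \widehat{P_Q}\circ U = U\circ\widehat{P_Q}$ is automatically a self-adjoint idempotent, hence an orthogonal projection — this is the standard elementary fact $P_1P_2=P_2P_1 \Rightarrow (P_1P_2)^*=P_1P_2$ and $(P_1P_2)^2=P_1P_2$. For localization I would use the characterization \eqref{ilo1}--\eqref{ilo2} of \cite[Lemma 2.1]{BDK}: $U$ is supported in the strip, $\widehat{P_Q}$ is supported in $\Phi_k((0,\pi)\times Q)$, and since one factor kills anything outside its compact support set while the other restricts further, the composition is localized on the intersection $\Phi_k((\vartheta-\delta,\pi-\vartheta+\delta)\times Q) = \Omega$; I would spell out the two conditions \eqref{ilo1} and \eqref{ilo2} for a finite combination of simple $H$-operators representing $P_\Omega$.

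I expect the commutation identity $\widehat{P_Q}\circ U = U\circ\widehat{P_Q}$ to be the only real point requiring care, precisely because the two operators act on ``different variables'' only after passing through the diffeomorphism $\Phi_k$ and excising the poles; one must check that nothing goes wrong at $t=0,\pi$, which is exactly why the hypothesis $0<\vartheta-\delta$ forces $U$ to vanish near the poles and why $\widehat{\cdot}$ is defined to vanish there. Everything else — idempotency, self-adjointness, and the localization bookkeeping — is routine once the commutation and the membership in $\mathcal H(\Sp^k)$ (via \cite[Lemma 4.1]{BD}) are in hand.
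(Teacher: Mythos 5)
Your overall architecture is reasonable, and in fact the paper offers no written argument here at all: Lemma \ref{zlematu.v} is justified by citing the lifting lemma \cite[Lemma 4.1]{BD} (or \cite[Lemma 5.3]{BDK}), which is precisely a statement about the \emph{composition} of a lifted projection with a latitudinal operator vanishing near the poles. The genuine gap in your proposal is the first step: you quote \cite[Lemma 4.1]{BD} to record that $\widehat{P_Q}$ \emph{by itself} is a Hestenes operator on $\Sp^k$ localized on $\Phi_k((0,\pi)\times Q)$. That is not what the lifting lemma gives, and the claim is false in general. Indeed, every Hestenes operator maps $C(\Sp^k)$ into $C(\Sp^k)$, whereas $\widehat{P_Q}\mathbf 1(t,\xi)=P_Q(\mathbf 1)(\xi)$ for $t\in(0,\pi)$ and $=0$ at the poles, which is discontinuous at $\pm\1^k$ whenever $P_Q\mathbf 1\not\equiv 0$; equivalently, in the natural representation the multipliers $\varphi_j(\xi)$ of the lifted simple $H$-operators neither extend smoothly across the poles nor have compact support inside the pole-free charts $\Phi_k((0,\pi)\times V_j)$. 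Since membership of $P_\Omega$ in $\mathcal H(\Sp^k)$ is one of the assertions of the lemma, your route to it (``Hestenes composed with Hestenes'') collapses at exactly this point; note also that \cite[Lemma 2.1]{BDK} (conditions \eqref{ilo1}--\eqref{ilo2}) only characterizes localization of operators \emph{already known} to lie in $\mathcal H(\Sp^k)$, so it cannot substitute for this step.

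The repair is the role played by $U$, which you gesture at but do not deploy where it is needed: the coefficients of $U=M_{\psi^\#_\vartheta}+K^\#_\vartheta$ vanish outside the strip $\Phi_k((\vartheta-\delta,\pi-\vartheta+\delta)\times\Sp^{k-1})$, so in $U\circ\widehat{P_Q}$ every term becomes ``multiplication by a smooth function supported away from the poles, composed with a diffeomorphism between pole-free open sets'', i.e.\ a genuine simple $H$-operator on $\Sp^k$; this must either be verified directly or quoted from \cite[Lemma 4.1]{BD} for the composition, which is exactly what the paper does. Your remaining steps are fine: the commutation $\widehat{P_Q}\circ U=U\circ\widehat{P_Q}$ (linearity of $P_Q$ on each latitude slice, with the domain issue at the poles handled because $Uf$ vanishes near $\pm\1^k$), the fact that $\widehat{P_Q}$ is an orthogonal projection on $L^2(\Sp^k)$ because the measure factorizes as $\sin^{k-1}t\,dt\,d\sigma_{k-1}(\xi)$, the standard commuting-projections argument, and the support bookkeeping giving localization on $\Omega$.
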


Let 
\[
\Psi_k :[0,\pi]^{k-1}\times [0,2\pi] \to \mathbb S^{k}
\]
be the standard spherical coordinates given by the recurrence formula
\begin{equation}\label{scor}
\begin{aligned}
\Psi_1(t)&=(\sin t, \cos t), \qquad t\in [0,2\pi],
\\
\Psi_{k+1}(t,x)&=(\xi \sin t ,\cos t), \qquad
(t,x) \in [0,\pi] \times ([0,\pi]^{k-1}\times [0,2\pi]),
\end{aligned}
\end{equation}
where $\Psi_{k}(x)=\xi\in \mathbb S^{k}$.

To construct a Hestenes operator satisfying Theorem \ref{easyth} we will use two {\it symmetric interior patches}
\[
\begin{aligned}
Q &= \Psi_{k-1}( [\th^{k-1}_{1},\th^{k-1}_{2}]\times \cdots \times [\th^{2}_{1},\th^{2}_{2}]\times [\th^{1}_{1},\th^{1}_{2}]),
\\
\Omega &=\Psi_{k}( [\th^{k}_{1},\th^{k}_{2}]\times \cdots \times [\th^{2}_{1},\th^{2}_{2}]\times [\th^{1}_{1},\th^{1}_{2}] ),
\end{aligned}
\]
where $0<\th_1^j<\th_2^j<2\pi$ for $j=1$, and $0<\th_1^j<\th_2^j<\pi$, $\th_2^j=\pi-\th_1^j$ for $j=2,\ldots,k$. For sufficiently small $\delta>0$ define $\delta$-neighborhoods  of $\Omega$ and $Q$ by
\[
\begin{aligned}
Q_\delta &= \Psi_{k-1}( [\th^{k-1}_{1}-\delta,\th^{k-1}_{2}+\delta]\times \cdots \times [\th^{2}_{1}-\delta,\th^{2}_{2}+\delta]\times [\th^{1}_{1}-\delta,\th^{1}_{2}+\delta]),
\\
\Omega_\delta &=\Psi_{k}( [\th^{k}_{1}-\delta,\th^{k}_{2}+\delta]\times \cdots \times [\th^{2}_{1}-\delta,\th^{2}_{2}+\delta]\times [\th^{1}_{1}-\delta,\th^{1}_{2}+\delta]).
\end{aligned}
\]

\begin{theorem}\label{mainth}
Let $\Omega$ be a symmetric interior patch in $\Sp^k$, $k\ge 2$. Then there exist $\delta>0$ and  Hestenes operator  $P_{\Omega_\delta}$, which is an orthogonal projection localized on $\Omega_\delta$, such that 
for all $f\in C(\Sp^{k})$,
\begin{equation}\label{mainth2}
\int_{SO(k+1)}  T_b\circ P_{\Omega_\delta}\circ T_{b^{-1}} (f) d\mu_{k+1}(b) =c(P_{\Omega_\delta}) f,
\end{equation}
where $c(P_{\Omega_\delta})$ is a constant depending on $P_{\Omega_\delta}$. 
\end{theorem}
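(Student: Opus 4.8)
The plan is to prove Theorem \ref{mainth} by induction on the dimension $k$, using the latitudinal projection $U$ from Theorem \ref{operatorU}, the lifting construction of Lemma \ref{zlematu.v}, and the crucial reduction formula of Lemma \ref{zlematu.b}. The base case is essentially Theorem \ref{uu} on $\Sp^1$: a symmetric interior patch in $\Sp^2$ is built from an arc $Q=\Psi_1([\th_1^1,\th_2^1])\subset\Sp^1$, and Theorem \ref{uu} gives that the Marcinkiewicz average of $P_Q=\widetilde{P_{[\th_1^1,\th_2^1]}}$ is $\frac{\th_2^1-\th_1^1}{2\pi}$ times the identity on $C(\Sp^1)$, so \eqref{eq.101} holds with $k-1=1$ and constant $c(P_Q)=(\th_2^1-\th_1^1)/(2\pi)$.

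First I would set up the inductive step. Given a symmetric interior patch $\Omega=\Psi_k([\th_1^k,\th_2^k]\times\cdots\times[\th_1^1,\th_2^1])$ in $\Sp^k$, write $\vartheta=\th_1^k$ so that $\th_2^k=\pi-\vartheta$ and $0<\vartheta<\pi/2$. Choosing $\delta>0$ small enough, Theorem \ref{operatorU} produces the latitudinal Hestenes operator $U$ localized on $\Phi_k((\vartheta-\delta,\pi-\vartheta+\delta)\times\Sp^{k-1})$, which is an orthogonal projection on $L^2(\Sp^k)$ and satisfies $\calS_k(U)f=C(\psi_\vartheta^\#)f$. By the inductive hypothesis applied to the symmetric interior patch $Q=\Psi_{k-1}([\th_1^{k-1},\th_2^{k-1}]\times\cdots\times[\th_1^1,\th_2^1])$ in $\Sp^{k-1}$, there is $\delta'>0$ and a Hestenes operator $P_{Q_{\delta'}}$, an orthogonal projection on $L^2(\Sp^{k-1})$ localized on $Q_{\delta'}$, with $\calS_{k-1}(P_{Q_{\delta'}})h=c(P_{Q_{\delta'}})h$ for all $h\in C(\Sp^{k-1})$; taking $\delta\le\delta'$ we may use the same $\delta$. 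Then Lemma \ref{zlematu.v} gives that $P_{\Omega_\delta}:=\widehat{P_{Q_\delta}}\circ U = U\circ\widehat{P_{Q_\delta}}$ is a Hestenes operator, an orthogonal projection on $L^2(\Sp^k)$, localized on $\Omega_\delta=\Phi_k((\vartheta-\delta,\pi-\vartheta+\delta)\times Q_\delta)$, which matches the description of $\Omega_\delta$ in the recursive spherical coordinates.

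The heart of the argument is then a direct application of Lemma \ref{zlematu.b} with $P=P_{Q_\delta}$ (which satisfies \eqref{eq.101} by the inductive hypothesis) and $L$ the one-dimensional operator underlying $U$. There is a small bookkeeping point: Lemma \ref{zlematu.b} is stated for an operator of the form $\widehat{P}\circ L^\#$ where $L:C[0,\pi]\to C_0[0,\pi]$, whereas $U=M_{\psi_\vartheta^\#}+K_\vartheta^\#=E_\vartheta^\#-E_{\pi-\vartheta}^\#$ is a latitudinal operator of the $L^\#$ type. So I would take $L=E_\vartheta-E_{\pi-\vartheta}=P_\vartheta$, which does map $C[0,\pi]$ into $C_0[0,\pi]$ because of the polarity cancellation at the endpoints $0$ and $\pi$ (this is exactly the content of $U$ being well-defined and the fact, noted before Theorem \ref{operatorU}, that $L_{\pi-\vartheta}=\rho L_\vartheta\rho$), so that $U=L^\#=P_\vartheta^\#$ and $\widehat{P_{Q_\delta}}\circ U=\widehat{P_{Q_\delta}}\circ L^\#$. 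Lemma \ref{zlematu.b} then yields
\begin{equation*}
\calS_k(P_{\Omega_\delta})f=\calS_k(\widehat{P_{Q_\delta}}\circ L^\#)f = c(P_{Q_\delta})\,\calS_k(L^\#)f = c(P_{Q_\delta})\,C(\psi_\vartheta^\#)\,f
\end{equation*}
for all $f\in C(\Sp^k)$, where the last equality is \eqref{sup} of Theorem \ref{operatorU}. This is exactly \eqref{mainth2} with $c(P_{\Omega_\delta})=c(P_{Q_\delta})C(\psi_\vartheta^\#)$.

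The main obstacle, I expect, is not any single deep estimate but rather verifying the compatibility of all the coordinate systems and localization claims: one must check that the $\Phi_k$-based latitudinal strip from Theorem \ref{operatorU} and the $\Phi_{k-1}$-based lift of $Q_\delta$ combine, via the recursion $\Psi_{k+1}(t,x)=(\Psi_k(x)\sin t,\cos t)$, to exactly the set $\Omega_\delta$ as written in the theorem, and that the single small parameter $\delta$ can be chosen to work simultaneously at every level of the recursion (each level only shrinks the admissible $\delta$, and there are finitely many levels, so the minimum works). One also needs the observation $\widehat{P_{Q_\delta}}\circ U=U\circ\widehat{P_{Q_\delta}}$ from Lemma \ref{zlematu.v} to see the composition is genuinely an orthogonal projection and not merely idempotent formally. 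Apart from this bookkeeping, the proof is a clean induction driven by Lemma \ref{zlematu.b}.
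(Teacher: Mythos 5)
Your proposal is correct and follows essentially the same route as the paper: induction on the dimension with Theorem \ref{uu} as the base case, the latitudinal projection of Theorem \ref{operatorU}, the lifting/composition of Lemma \ref{zlematu.v}, and the reduction formula of Lemma \ref{zlematu.b}, yielding the same multiplicative constant $c(P_{\Omega_\delta})=c(P_{Q_\delta})\,C(\psi_\vartheta^\#)$. The extra bookkeeping you flag (that $L=P_\vartheta$ maps $C[0,\pi]$ into $C_0[0,\pi]$ and that $\hat{P}\circ U=U\circ\hat{P}$) is consistent with how the paper invokes these lemmas.
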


\begin{proof}
Let
\[
\Omega= \Psi_{k}( [\th^{k}_{1},\th^{k}_{2}]\times \cdots \times [\th^{2}_{1},\th^{2}_{2}]\times [\th^{1}_{1},\th^{1}_{2}])
\]
be a symmetric interior patch. 
For $j=2,\ldots,k$, let $U^j=E_{\th_1^j}^\#-E_{\th_2^j}^\#$  be the latitudinal projection corresponding to the interval $[\th^{j}_{1},\th^{j}_{2}]$ and acting on the space $C(\Sp^j)$ as in Theorem \ref{operatorU}.  

Suppose that $k=2$.
Let $Q=\{\Psi_1(t)=e^{it}\in \C: t\in [\th^{1}_{1},\th^{1}_{2}]\}$ be an arc in $\Sp^1\subset \C$. We choose $\delta>0$ such that 
$2\delta<\th^1_2 - \th^1_{1}$, $2\delta<2\pi -(\th^1_2-\th^1_{1})$, and $\th^2_1-\delta>0$.
 Then  by  Theorem \ref{uu} the operator $P_Q$ satisfies assumption \eqref{eq.101} of Lemma \ref{zlematu.b} with constant $c(P_Q)=\tfrac{\th^1_2-\th^1_{1}}{2\pi}$.
Applying   Lemma \ref{zlematu.b} and Lemma \ref{zlematu.v}  the operator
\[
P_{(2)}=U^2\circ \hat{P_Q}
\]  satisfies conditions of Theorem  \ref{mainth} with constant $c(P_{(2)})=c(U^2)c(P_Q)$.

For $k\geq 3$, we can assume by induction that we have an operator $P_{(k-1)}$ satisfying conclusions of Theorem \ref{mainth}.  Applying   Lemma \ref{zlematu.b} and Lemma \ref{zlematu.v}  for sufficiently small $\delta>0$,  the operator
\[
P_{\Omega_\delta}=P_{(k)}=U^k\circ \hat{P}_{(k-1)}
\]  satisfies conclusions of Theorem  \ref{mainth} with constant $c(P_{(k)})=c(U^k)c(P_{(k-1)})$.
 \end{proof}

We finish this section by showing a preliminary variant of Theorem \ref{easyth}.

\begin{theorem}\label{e1}
Let $\calB$ be a ball in $\Sp^{d-1}$. Let $\mu$ be a normalized Haar measure on $SO(d)$. 
There exist  Hestenes operator $P_\calB$  localized on $\calB$ and a constant $c=c(P_\calB)$ such that $P_\calB:L^2(\Sp^{d-1}) \to L^2(\Sp^{d-1})$ is an orthogonal projection and for all $f\in C(\Sp^{d-1})$,
\begin{equation}\label{e2}
\int_{SO(d)} T_b\circ P_\calB\circ T_{b^{-1}} (f) d\mu(b) =c f.
\end{equation}
\end{theorem}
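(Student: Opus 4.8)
The plan is to reduce Theorem \ref{e1} to the case of a symmetric interior patch, which is already settled by Theorem \ref{mainth}. First I would observe that any ball $\calB$ in $\Sp^{d-1}$ can be rotated so that its center lies anywhere we like; since the Marcinkiewicz average $\calS(P)$ is invariant under conjugation by rotations (if $P_\calB$ works for $\calB$, then $T_c P_\calB T_{c^{-1}}$ works for $c\calB$ with the same constant, by invariance of Haar measure), it suffices to construct the required projection for one conveniently placed ball. More precisely, if we can produce a Hestenes operator $P_{\Omega}$ which is an orthogonal projection localized on some small open set $\Omega$, satisfying \eqref{mainth2}, and if $\Omega$ can be arranged to fit inside a rotated copy of $\calB$, then conjugating by the appropriate rotation finishes the argument. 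So the real task is to fit a symmetric interior patch (or a rotate of one) inside a ball of prescribed radius.

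Next I would choose parameters for the symmetric interior patch. Recall a symmetric interior patch has the form $\Omega=\Psi_k([\th_1^k,\th_2^k]\times\cdots\times[\th_1^2,\th_2^2]\times[\th_1^1,\th_2^1])$ with $\th_2^j=\pi-\th_1^j$ for $j\ge 2$ — i.e. it is symmetric about the equatorial hyperplanes in all but the last angular variable — but we are free to take all the intervals $[\th_1^j,\th_2^j]$ as short as we wish, as long as the constraint $\th_2^j=\pi-\th_1^j$ forces these intervals to be centered at $\pi/2$. Wait: with $\th_2^j = \pi-\th_1^j$ the interval $[\th_1^j,\th_2^j]$ is symmetric about $\pi/2$ and has length $\pi-2\th_1^j$, which is small precisely when $\th_1^j$ is close to $\pi/2$. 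Hence, by taking each $\th_1^j$ close to $\pi/2$ for $j=2,\dots,k$ and $[\th_1^1,\th_2^1]$ a short arc, together with a small enough $\delta>0$, the $\delta$-neighborhood $\Omega_\delta$ has arbitrarily small diameter in the spherical metric. Consequently $\Omega_\delta$ is contained in a ball of any prescribed radius centered at the point $\Psi_k(\pi/2,\dots,\pi/2,(\th_1^1+\th_2^1)/2)$ of the sphere.

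With these choices, Theorem \ref{mainth} gives a Hestenes operator $P_{\Omega_\delta}$ which is an orthogonal projection on $L^2(\Sp^{d-1})$ localized on $\Omega_\delta$ and satisfying \eqref{mainth2} with some constant $c(P_{\Omega_\delta})$. Let $\calB=\calB(x_0,r)$ be the given ball. Pick the patch parameters and $\delta$ so that $\diam(\Omega_\delta)<r$, so that $\Omega_\delta\subset\calB(z_0,r)$ where $z_0$ is the patch center, and choose $c_0\in SO(d)$ with $c_0 z_0 = x_0$; then $c_0(\Omega_\delta)\subset\calB(x_0,r)=\calB$. Define $P_\calB = T_{c_0}\circ P_{\Omega_\delta}\circ T_{c_0^{-1}}$. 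Since $T_{c_0}$ is a unitary on $L^2(\Sp^{d-1})$ and maps Hestenes operators to Hestenes operators (conjugating simple $H$-operators by a diffeomorphism is again a simple $H$-operator), $P_\calB$ is a Hestenes orthogonal projection localized on $c_0(\Omega_\delta)\subset\calB$. Finally, for $f\in C(\Sp^{d-1})$, a change of variable $b\mapsto c_0 b$ in the Haar integral and \eqref{mainth2} give
\[
\int_{SO(d)} T_b\circ P_\calB\circ T_{b^{-1}}(f)\,d\mu(b)
= T_{c_0}\Big(\int_{SO(d)} T_b\circ P_{\Omega_\delta}\circ T_{b^{-1}}(T_{c_0^{-1}}f)\,d\mu(b)\Big)
= c(P_{\Omega_\delta})\, T_{c_0}T_{c_0^{-1}}f = c\,f,
\]
with $c=c(P_{\Omega_\delta})$, which is \eqref{e2}. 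The one point requiring a little care — and the only real obstacle — is verifying that a symmetric interior patch and its $\delta$-neighborhood can genuinely be made to have arbitrarily small diameter despite the rigidity constraint $\th_2^j=\pi-\th_1^j$; this is handled by the observation above that the constraint merely pins the latitudinal intervals to be centered at the equator, while still allowing them to be arbitrarily short.
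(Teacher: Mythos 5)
Your proposal is correct and follows essentially the same route as the paper: shrink the symmetric interior patch (taking $\th_1^j$ near $\pi/2$ so the equator-centered intervals are short, plus small $\delta$) so that $\Omega_\delta$ has diameter less than the radius of $\calB$, apply Theorem \ref{mainth}, and conjugate by a rotation $a\in SO(d)$ carrying $\Omega_\delta$ into $\calB$, with the reproducing property \eqref{e2} transferring by bi-invariance of the Haar measure. The only cosmetic difference is that you verify the smallness of the patch and the conjugation identity in more detail than the paper, which simply cites \cite[Lemma 2.1]{BDK} for the localization of $T_a P_{\Omega_\delta} T_{a^{-1}}$.
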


\begin{proof}
Take any geodesic ball $\calB$ with radius $r>0$. For $\ve>0$, we choose $\th^j_1<\th^j_2$ and $\delta>0$, such that $\th^j_2-\th^j_1+2\delta<\ve$ for all $ j=1,\ldots,k$. Choose $\ve>0$ small enough such that symmetric interior patch $\Omega_\delta$ has diameter less than $r$. Let $a\in SO(d)$ be such that $a(\Omega_\delta) \subset \calB$. 
Define $P_\calB=T_a P_{\Omega_\delta} T_{a^{-1}}$, where $P_{\Omega_\delta}$ is as in Theorem \ref{mainth}. Then $P_\calB$ is both Hestenes operator and   orthogonal projection and moreover $P_\calB$ is localized in $\calB$. Indeed,
for any $f\in C(\Sp^{d-1})$, $\supp P_\calB f =a(\supp (P_{\Omega_\delta}\circ T_{a^{-1}} f)) \subset a(\Omega_\delta) $. Likewise, if $\supp f \cap \calB = \emptyset$, then $P_\calB f=0$. Hence, the localization of $P_\calB$ follows from \cite[Lemma 2.1]{BDK}.
Since $P_{\Omega_\delta}$ satisfies \eqref{e2} for $f\in C(\Sp^{d-1})$, so does $P_\calB$.
\end{proof}

\section{Proof of Theorem \ref{easyth}} \label{S7}

In this section we give a proof of 
Theorem \ref{easyth}, which is a consequence of Theorem \ref{e1} and the following two propositions. Let $\mathcal D$ be the test space of $C^\infty$ functions on $\Sp^{d-1}$. Let $\mathcal D'$ be the dual space of distributions on $\Sp^{d-1}$.

\begin{proposition}\label{apropo}
Let $P$ be Hestenes operator such that there is a constant $c=c(P)$ such that for all $f\in C(\Sp^{d-1})$ and for all $\xi\in \Sp^{d-1}$ the following reproducing formula holds
\begin{equation}\label{e3}
\int_{SO(d)} T_b\circ P \circ T_{b^{-1}} (f)(\xi) d\mu(b) =c f(\xi).
\end{equation}
Let $X$ be a quasi Banach space on which $X'$ separates points such that:
\begin{enumerate}
\item we have continuous embeddings ${\mathcal D} \hookrightarrow X \hookrightarrow{\mathcal D}'$ and ${\mathcal D}$ dense in $X$, 
\item there is a constant $C>0$ such that for all $b\in SO(d)$ 
\[
\|T_b\|_{X\to X}\leq C,
\]
\item the operator $P:X\to X$ is  bounded.
\end{enumerate}

Then the integral reproducing formula
\begin{equation}\label{e4}
\int_{SO(d)} T_b\circ P \circ T_{b^{-1}} (f) d\mu(b) =c f.
\end{equation}
 holds for all $f\in X$ in the sense of Pettis integral. In the case $X$ is Banach space the integral is Bochner integral. 
\end{proposition}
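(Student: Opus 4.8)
The plan is to reduce the statement on $X$ to the known reproducing formula on $C(\Sp^{d-1})$ from \eqref{e3} by a density argument, after first establishing that the integrand $b \mapsto T_b \circ P \circ T_{b^{-1}}(f)$ is a well-behaved $X$-valued function on $SO(d)$. The first step is to check that, for fixed $f \in X$, the map $g_f : SO(d) \to X$, $g_f(b) = T_b \circ P \circ T_{b^{-1}}(f)$, is continuous. The norm bound in hypothesis (2) gives $\|g_f(b)\|_X \le C^2 \|P\|_{X \to X}\|f\|_X$ uniformly in $b$, so $g_f$ is bounded; continuity in $b$ follows from strong continuity of the rotation action $b \mapsto T_b$ on $X$, which I would derive from the uniform bound in (2) together with density of $\mathcal D$ in $X$ (hypothesis (1)): on the dense subspace $\mathcal D$ the map $b \mapsto T_b h$ is norm-continuous since $T_b h \to h$ in every seminorm of $\mathcal D$ hence in $X$, and a standard $3\varepsilon$ argument using the uniform bound upgrades this to all of $X$. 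Since $P$ is bounded on $X$ (hypothesis (3)), composing gives continuity of $g_f$. Continuity plus boundedness on the compact group $SO(d)$ with finite Haar measure $\mu$ guarantees that the Pettis integral exists (and is a Bochner integral when $X$ is Banach, by Bochner's theorem via separability of the range of the continuous map $g_f$, e.g. \cite[Theorem II.2]{Diestel}).

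The second step is to identify the value of the integral. Define $\calS(P) : X \to X$ by $\calS(P)(f) = \int_{SO(d)} g_f(b)\, d\mu(b)$. I claim $\calS(P) = c\, \mathbf I$ on $X$. The key point is that $\calS(P)$ is a bounded linear operator on $X$ — linearity is clear, and boundedness follows from $\|\calS(P)(f)\|_X \le \int \|g_f(b)\|_X\, d\mu(b) \le C^2 \|P\|_{X\to X}\|f\|_X$ (using $\mu(SO(d)) = 1$ and the fact that the quasi-norm estimate for Pettis/Bochner integrals holds here because $g_f$ is continuous on a compact space, so it is a uniform limit of simple functions). Now both $\calS(P)$ and $c\,\mathbf I$ are bounded operators on $X$ that agree on $\mathcal D$: for $f \in \mathcal D \subset C(\Sp^{d-1})$, the pointwise identity \eqref{e3} holds, and since point evaluations (or more safely, the embedding $X \hookrightarrow \mathcal D'$ composed with test functions) are continuous functionals, evaluating the Pettis integral against any $\vp \in X'$ and using that for $f \in \mathcal D$ the $X$-valued integral coincides with the scalar-valued integrals forced by \eqref{e3} shows $\calS(P)(f) = cf$ in $X$ for $f \in \mathcal D$. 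Density of $\mathcal D$ in $X$ and boundedness of both operators then give $\calS(P) = c\,\mathbf I$ on all of $X$.

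One technical care point deserves emphasis: to transfer \eqref{e3} from a pointwise scalar identity to an identity of $X$-valued Pettis integrals, I would use hypothesis (1) that $X \hookrightarrow \mathcal D'$ continuously, so that for each $h \in \mathcal D$ the functional $u \mapsto \langle u, h \rangle$ lies in $X'$; applying such a functional to $\calS(P)(f)$ for $f \in \mathcal D$ commutes with the Pettis integral and reduces to integrating the scalar identity \eqref{e3} tested against $h$, yielding $\langle \calS(P)(f), h\rangle = c \langle f, h \rangle$ for all $h \in \mathcal D$; since $\mathcal D$ is dense in $X$ and $X \hookrightarrow \mathcal D'$, and $X'$ separates points, this forces $\calS(P)(f) = cf$ in $X$. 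The main obstacle I anticipate is precisely this interchange of the Pettis integral with duality pairings in the quasi-Banach setting together with verifying strong continuity of $b \mapsto T_b$ on $X$; once strong continuity is in hand, everything else is a routine density-and-boundedness argument. (The verification that $b \mapsto T_b \circ P \circ T_{b^{-1}}(f)$ is continuous is, per the remark following Definition \ref{Marcin}, deferred to a separate lemma in this section, which I would invoke here.)
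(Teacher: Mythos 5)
Your overall strategy (prove the formula on $\mathcal D$, then extend to $X$ by density using the uniform bound on $T_b$), and your $3\varepsilon$ continuity argument for $b \mapsto T_b \circ P \circ T_{b^{-1}}(f)$, match the paper. However, there is a genuine gap in how you pass from the dense subspace to all of $X$: you assert that continuity and boundedness of the integrand on the compact group already guarantee existence of the $X$-valued Pettis integral, and you bound the resulting operator via $\|\calS(P)f\|_X \le \int \|g_f(b)\|_X\, d\mu(b)$. Neither is available in a general quasi-Banach space: when $X$ is not locally convex (e.g.\ $\F^s_{p,q}$ or $\B^s_{p,q}$ with $\min(p,q)<1$, which are precisely the cases the proposition must cover), the inequality $\bigl\|\int g\, d\mu\bigr\|_X \le \int \|g\|_X\, d\mu$ fails --- the quasi-triangle constant accumulates over the terms of a simple function, so the ``uniform limit of simple functions'' justification does not rescue it --- and Pettis integrability of a bounded continuous function is not automatic; it must be established by exhibiting an element of $X$ representing all the scalar integrals. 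Hence the boundedness of $\calS(P)$ on $X$, on which your density step rests, is unjustified, and so is the existence claim itself.

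The paper avoids both issues by never defining $\calS(P)$ abstractly on $X$. First, for $f\in\mathcal D$ the integral is shown to exist in the Fr\'echet space $\mathcal D$ (a Rudin-type argument, valid because $\mathcal D$ is complete and locally convex) and is identified as $cf$ via the pointwise formula \eqref{e3}. Then, for each fixed $\Lambda \in X'$ and $f\in X$, the scalar quantity $\Gamma(f) = \int_{SO(d)} \Lambda\bigl(T_b P T_{b^{-1}} f\bigr)\, d\mu(b)$ is an ordinary Lebesgue integral of a continuous function; the estimate $|\Gamma(f)| \le C^2\|\Lambda\|\,\|P\|_{X\to X}\,\|f\|_X$ shows $\Gamma \in X'$, and $\Gamma = c\Lambda$ on $\mathcal D$ by the first step, hence on all of $X$ by density. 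This simultaneously proves existence of the Pettis integral and identifies it as $cf$, since the defining property must hold for every $\Lambda\in X'$. Your closing ``technical care point'' gestures in this direction but only tests against functionals induced by $\mathcal D$ through $X \hookrightarrow \mathcal D'$ (not all of $X'$) and still presupposes that the integral already exists in $X$. Replacing your second step by this per-functional argument closes the gap; your treatment of the Bochner case for Banach $X$ is then fine.
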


\begin{proof}
Observe that  the mapping $SO(d) \times \mathcal D\ni (b,f) \mapsto  T_b f\in \mathcal D$ is continuous. This follows from
\begin{equation}\label{e5}
|\nabla^k T_b f(x)| = |\nabla^k f(b^{-1}x)| \qquad \text{where } x\in \Sp^{d-1},\ b \in SO(d),\ k\ge 0,
\end{equation}
which can be seen from explicit formulas for covariant derivative $\nabla$ on the sphere \cite[(1.4.6) and (1.4.7)]{DaiXu}.

 By \cite[Lemma 3.2]{BD} or \cite[Theorem 2.6]{BDK}, the operator $P: \mathcal D \to \mathcal D$ is continuous.
By an argument as in the proof of \cite[Theorem 5.18]{Rudin}, the Pettis integral on the left hand side of \eqref{e4} exists and defines a continuous operator in the Fr\'echet space $\mathcal D$. By the assumption \eqref{e3}, this operator is a multiple of the identity operator by a constant $c=c(P)$. Hence, \eqref{e4} holds for $f\in\mathcal D$.

Note that conditions (1) and (2) imply that
\begin{equation}\label{e11}
SO(d)\times X \ni (b,f) \mapsto  T_b f\in X \text{ is continuous.}
\end{equation}
Since $\mathcal D \subset X$ is dense, for any $f_0\in X$ and $\ve>0$, there exists $g\in \mathcal D$ such that $||f_0-g||_X<\ve$. Since ${\mathcal D} \hookrightarrow X$ is a continuous embedding, for sufficiently close $b_1,b_2 \in SO(d)$, we have $\|T_{b_1} g- T_{b_2}g \|_X <\ve$.
By the triangle inequality for a quasi Banach space there exists a constant $K\ge 1$ such that
\[\begin{aligned}
\|T_{b_1} f_0- T_{b_2}f_0 \|_X
&\leq K (\|T_{b_1} f_0- T_{b_1}g \|_X + K( \|T_{b_1} g- T_{b_2}g \|_X + \|T_{b_2} g- T_{b_2}f_0 \|_X))  
\\
& \le K(C\ve + K(C+1)\ve).
\end{aligned}
\]
In the last step we used the assumption that operators $T_b$ are uniformly bounded.
On other hand, for any $f\in X$ such  that $||f-f_0|| < \ve$ we have
\[
\|T_{b_1} f_0- T_{b_2}f \|_X
\leq K (\|T_{b_1} f_0- T_{b_2}f_0 \|_X + \|T_{b_2} f_0- T_{b_2}f \|_X).
\]
Combing the above estimates yields \eqref{e11}.

Take any $f\in X$ and $\Lambda \in X'$. Then, the function
\begin{equation}\label{e13}
SO(d) \ni b \mapsto  \Lambda T_b P T_{b^{-1}} f\in X \text{ is continuous.}
\end{equation}
Hence, we can define a linear functional
\[
\Gamma(f):=\int_{SO(d)} \Lambda T_b P T_{b^{-1}} f \, d\mu_d(b).
\]
Moreover,
\[
|\Gamma(f)| \le \int_{SO(d)} |\Lambda T_b P T_{b^{-1}} (f)| \, d\mu_d(b)\le C^2 ||\Lambda|| \cdot ||P||_{X\to X} ||f||_X.
\]
Thus, $\Gamma \in X'$. Since $\Gamma(f)=c\Lambda(f)$ holds for $f\in \mathcal D$, it follows that the same holds for $f\in X$, and the conclusion follows by the definition of Pettis integral. Finally, if $X$ is a Banach space, then the integrand in \eqref{e4} is continuous, and hence, the integral exists in the Bochner sense.
\end{proof}

\begin{proposition}\label{fs}
The following spaces satisfy conditions (1)--(3) of Proposition \ref{apropo}:
\begin{itemize}
\item
Triebel-Lizorkin space $\F^s_{p,q}(\Sp^{d-1})$, $0< p<\infty, 0< q<\infty$, $s\in \R$,
\item
Besov space $\B^s_{p,q}(\Sp^{d-1})$, $ 0< p< \infty, 0<q< \infty$, $s\in\R$, 
\item
 the Lebesgue space $L^p(\Sp^{d-1})$ and Sobolev space $W^k_p(\Sp^{d-1})$, $1\le p<\infty$, $k\geq 1$,
\item
 the space $C^k(\Sp^{d-1})$, $k\geq 0$.
\end{itemize}
\end{proposition}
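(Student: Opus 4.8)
I would check conditions (1)--(3) for each space on the list, condition~(3) carrying essentially all the work. It helps to recall that on the compact Riemannian manifold $\Sp^{d-1}$ every space on the list has two equivalent descriptions: an \emph{intrinsic} one --- for $\F^s_{p,q}$ and $\B^s_{p,q}$ a Littlewood--Paley-type decomposition built from the Laplace--Beltrami operator $\Delta$, or equivalently from spherical harmonic expansions as in \cite{DaiXu}; for $W^k_p$ the norm $\sum_{j\le k}\|\nabla^j\,\cdot\,\|_{L^p}$ --- and an \emph{atlas} one, coming from a finite cover by coordinate charts and a subordinate $C^\infty$ partition of unity, which realizes $X(\Sp^{d-1})$ as a retract of a finite product of the corresponding spaces on $\R^{d-1}$. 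I would fix both at the outset and use whichever is convenient, relying on their (standard) equivalence.

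\textbf{Conditions (1) and (2).} For $L^p$ and $W^k_p$ with $1\le p<\infty$, for $C^k$ with $k\ge0$, and for $\F^s_{p,q}$ and $\B^s_{p,q}$ with $0<p,q<\infty$, the continuous embeddings $\mathcal D\hookrightarrow X\hookrightarrow\mathcal D'$ are classical. Density of $\mathcal D=C^\infty(\Sp^{d-1})$ in $X$, where the restrictions $p,q<\infty$ (resp. $p<\infty$) enter, follows either by applying the heat semigroup $e^{-t\Delta}$ --- which maps $X$ into $\mathcal D$ and tends strongly to the identity on $X$ as $t\to0^+$ --- or by transferring the Euclidean density statements through the atlas. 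Since $X\hookrightarrow\mathcal D'$ is continuous and injective, the functionals $f\mapsto\langle f,\psi\rangle$, $\psi\in\mathcal D$, lie in $X'$ and separate points of $X$, so the standing hypothesis of Proposition~\ref{apropo} that $X'$ separates points is automatic. For (2): each $b\in SO(d)$ is an isometry of $\Sp^{d-1}$ which preserves the surface measure and commutes with $\Delta$ and with $\nabla$ (by the pointwise identity $|\nabla^kT_bf|=|\nabla^kf|\circ b^{-1}$ used in \eqref{e5}); hence $T_b$ is an isometry of $L^p$, of $C^k$, of $W^k_p$, and --- since $T_b$ commutes with the Littlewood--Paley pieces $\varphi_j(\sqrt\Delta)$ --- of $\F^s_{p,q}$ and $\B^s_{p,q}$. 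So (2) holds with $C=1$, with no appeal to uniform boundedness.

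\textbf{Condition (3).} As $P$ is a finite linear combination of simple $H$-operators, it suffices to bound one $H_{\varphi,\Phi,V}$ on $X$. Write $H_{\varphi,\Phi,V}f=\varphi\cdot(f\circ\Phi)$, the composition extended by $0$ outside $V$. Because $\supp\varphi$ is a compact subset of $V$ and $\Phi(\supp\varphi)$ a compact subset of $V'$, I would cover these two compacta by charts from the chosen atlas; read in these coordinates, $H_{\varphi,\Phi,V}$ is the zero extension of an operator on $\R^{d-1}$ of the form ``multiply by a $C^\infty_c$ function, then compose with a $C^\infty$ diffeomorphism that equals the identity outside a compact set''. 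Such an operator is bounded on $\R^{d-1}$: on $C^k$ by the chain and Leibniz rules; on $L^p$ and $W^k_p$ by change of variables and the Leibniz rule; and on $\F^s_{p,q}$, $\B^s_{p,q}$ for every $s\in\R$ and $0<p,q<\infty$, since pointwise multiplication by $C^\infty_c$ functions is bounded on these scales and composition with such diffeomorphisms is bounded by their diffeomorphism invariance (Triebel's monographs on function spaces). Transporting this bound back to $\Sp^{d-1}$ via the atlas description of $X(\Sp^{d-1})$ gives boundedness of $H_{\varphi,\Phi,V}$, and hence of $P$, on $X$.

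\textbf{Expected main obstacle.} The delicate point is condition~(3) for the Triebel--Lizorkin and Besov scales: one must set up the localization so that the zero extension genuinely reduces the problem to a compactly supported perturbation of the identity on $\R^{d-1}$, and then invoke the sharp forms of the $C^\infty_c$-multiplier and diffeomorphism-invariance theorems, which for negative smoothness $s$ and for $p<1$ are nontrivial. A secondary, bookkeeping, issue is that the intrinsic description is the one adapted to condition~(2) while the atlas description is adapted to condition~(3), so one pays for passing between them; this is what lengthens the argument rather than what makes it deep.
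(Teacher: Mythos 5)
Your proposal is correct in outline, but it takes a genuinely different route at the two points where the paper has content, so a comparison is worth making. For condition (2) on the Triebel--Lizorkin and Besov scales the paper does not use a spectral/Littlewood--Paley characterization at all: it invokes Skrzypczak's smooth atomic decompositions \cite{Sk0}, observes that the rotation $T_b a$ of an $(s,p)$-atom centered in $B(x,r)$ is again such an atom centered in $B(b^{-1}x,r)$, and argues that the equivalence constants in the atomic norm depend only on the parameters of the uniformly finite covering, hence are uniform in $b$. Your argument gets rotation invariance for free from the commutation of $T_b$ with $\varphi_j(\sqrt{\Delta})$, but it then needs the equivalence of the intrinsic (spectral) description of $\F^s_{p,q}(\Sp^{d-1})$ and $\B^s_{p,q}(\Sp^{d-1})$ with the atlas/atomic one for all $s\in\R$, $0<p,q<\infty$; that equivalence is known but is a genuine theorem, so you are trading one nontrivial input for another of comparable depth. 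For condition (3) the paper gives no argument at all: it cites \cite[Theorem 2.6]{BDK} and \cite[Theorem 3.1 and Corollary 3.6]{BDK2}, where boundedness of Hestenes operators on exactly these spaces is proved; your chart-by-chart reduction to multiplication by $C^\infty_c$ functions plus diffeomorphism invariance on $\R^{d-1}$ is in substance a sketch of how those cited results go. If you carry it out, note two points: the diffeomorphism $\Phi:V\to V'$ of a simple $H$-operator need not extend to a global diffeomorphism of $\R^{d-1}$ equal to the identity outside a compact set, so one should first insert a cutoff $\psi\in C^\infty_c(V')$ with $\psi\equiv 1$ near $\Phi(\supp\varphi)$ and write $H_{\varphi,\Phi,V}f=\varphi\cdot\bigl((\psi f)\circ\Phi\bigr)$ before invoking the Euclidean multiplier and diffeomorphism theorems (this matters for $s\le 0$, where restriction to $V'$ is otherwise meaningless); and those Euclidean theorems for $p,q<1$ or negative $s$ are precisely the nontrivial inputs you flag. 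With these caveats your plan works; what each approach buys is that the paper's proof is short because it outsources (3) to prior work and settles (2) with atoms, while yours is self-contained in spirit but longer and dependent on the equivalence of the two descriptions of the spaces.
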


\begin{proof} The condition {\it (1)} is a standard fact in function spaces, whereas {\it (3)} follows from \cite[Theorem 2.6]{BDK} and \cite[Theorem 3.1 and Corollary 3.6]{BDK2}. The condition {\it (2)} is immediate for the spaces $L^p$, $W^k_p$, and $C^k$ from \eqref{e5}. The condition {\it (2)} is a consequence of a general result on smooth atomic decomposition for $\F^s_{p,q}$ and $\B^s_{p,q}$ spaces due to Skrzypczak \cite{Sk0}. Indeed, if $a$ is a smooth $(s,p)$-atom on $\Sp^{d-1}$ centered in $B(x,r)$, then its rotation $T_ba$ is also a smooth atom centered in $B(b^{-1}x,r)$, see 
\cite[Definition 6]{Sk0}. Hence, the atomic decomposition of $f\in \F^s_{p,q}$ (or $f \in \B^s_{p,q}$) of the form $f=\sum_{j=0}^\infty\sum_{i=0}^\infty s_{j,i} a_{j,i}$ as in \cite[Theorem 3]{Sk0} yields the atomic decomposition $T_bf=\sum_{j=0}^\infty\sum_{i=0}^\infty s_{j,i} T_b a_{j,i}$. While the centers of the family of atoms $\{a_{j,i}\}$ have changed after the rotation, they correspond to another uniformly finite sequence of coverings of $\Sp^{d-1}$ with the same parameters. Then, the equivalence of the norm $||f||_{\F^s_{p,q}}$ (or $||f||_{\B^s_{p,q}}$)  with its atomic decomposition norm is independent of the choice of such uniformly finite sequence of coverings. This can be seen by analyzing the proof of \cite[Theorem 3]{Sk0} to see that equivalence constants depends only on the parameters of a uniformly finite sequence of coverings. Alternatively, any such sequence of coverings can be mapped to a fixed uniformly finite sequence of coverings (albeit with enlarged parameters).
\end{proof}

Combining Theorem \ref{e1} with Propositions \ref{apropo} and \ref{fs} yields Theorem \ref{easyth}.

\section{Continuous Parseval frame on sphere}\label{S8}

In this section we construct a continuous wavelet frame on $\Sp^{d-1}$.
Unlike earlier constructions \cite{Antoine, Dahlke, Freeden, Hol, Iglewska}, our continuous wavelet frames have arbitrarily small support. We start by recalling the definition of continuous frame.

\begin{definition}
Let $\calH$ be a separable Hilbert spaces and let $(X, \nu)$ be a measure space. 
A family of vectors $\{\phi_t\}$, $t\in X$ is a {\it continuous frame} over $X$ for $\calH$
if:
\begin{itemize}

\item for each $f\in H$, the function $X\ni t \to \lan f,\phi_t \ran_{\calH} \in \C$ is measurable, and

\item  there are constants $0<A\leq B<\infty$, called {\it frame bounds}, such that
\begin{equation}\label{orety}
A\|f\|_{\calH}^2\leq \int_X |\lan f, \phi_t \ran_{\calH} |^2 d\nu \leq B\|f\|_{\calH}^2 \quad  \text{ for all } f\in \calH
\end{equation}

\end{itemize}
When $A = B$, the frame is called {\it tight}, and when $A = B = 1$, it is a
{\it continuous Parseval frame}. More generally, if only the upper bound holds in
\eqref{orety}, that is even if $A = 0$, we say that $\{\phi_t\}$, $t\in X$ is a {\it continuous Bessel family}
with bound $B$.
\end{definition}

The following elementary lemma shows the existence of a local continuous Parseval frame in $L^2(\R^k)$. For an alternative construction of a local
Parseval frame, see \cite[Theorem 4.1]{BDK}.

\begin{lemma}\label{localP}
Let $\epsilon_0>0$.
There is  a collection $\psi_t$, $t\in X$, of functions in $L^2(\R^k)$ such that:
\begin{itemize}
\item
for all $t\in X$
\[
\supp \psi_t \subset [-1-\epsilon_0,1+\epsilon_0]^{k},
\]
\item
for all $f\in L^2(\R^k)$ with $\supp f\subset [-1,1]^k$ we have
\[
\int_X |\lan f,\psi_t \ran_{L^2(\R^k)}|^2 d\nu (t)=\|  f\|_{L^2(\R^k)}^2.
\]
\end{itemize}
\end{lemma}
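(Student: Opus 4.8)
The plan is to build the frame by hand from a one-dimensional smooth partition of unity based on the functions $s$ appearing in \eqref{sfun}, tensored over the $k$ coordinates, together with a Fourier (or just $L^2$) orthonormal basis of $L^2$ on a cube of side $2+2\epsilon_0$. Concretely, fix $\delta>0$ with $\delta<\epsilon_0$ and set $g(u)=s(u+1)s(-(u+1))$ smoothed near $u=-1$ and symmetrically near $u=1$; the key algebraic identity we want is a function $w\in C^\infty(\R)$ with $\supp w\subset[-1-\epsilon_0,1+\epsilon_0]$, $w\equiv 1$ on $[-1,1]$, and $w^2+\text{(shift by the lattice }2+2\epsilon_0)\text{ copies}=1$ — but in fact here we do not even need a full partition of unity, only the one-sided statement ``$w\equiv 1$ on $[-1,1]$ and $\supp w$ small'', since the lower frame bound is only required against $f$ supported in $[-1,1]^k$.

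First I would reduce to the following clean statement: let $Q=[-1-\epsilon_0,1+\epsilon_0]^k$, let $\{e_m\}_{m\in\Z^k}$ be the (normalized) exponential orthonormal basis of $L^2(Q)$, extended by zero to $\R^k$, and let $w\in C_c^\infty(\R^k)$ be a tensor product $w(x)=\prod_{j=1}^k w_0(x_j)$ with $w_0$ as above, so $w\equiv 1$ on $[-1,1]^k$ and $\supp w\subset Q$. Define $X=\Z^k$ with counting measure $\nu$, and $\psi_m=|Q|^{-1/2}\,\ov{e_m}\,w$. Then $\supp\psi_m\subset Q$ for all $m$, and for $f\in L^2(\R^k)$ with $\supp f\subset[-1,1]^k$ we have $wf=f$, hence
\[
\sum_{m\in\Z^k}|\lan f,\psi_m\ran_{L^2(\R^k)}|^2
=\sum_{m\in\Z^k}|\lan wf,\psi_m\ran|^2
=\sum_{m\in\Z^k}|\lan f, |Q|^{-1/2}\ov{e_m}\ran_{L^2(Q)}|^2
=\|f\|_{L^2(Q)}^2=\|f\|_{L^2(\R^k)}^2,
\]
by Parseval's identity for the orthonormal basis $\{|Q|^{-1/2}\ov e_m\}$ of $L^2(Q)$ (note $\ov{e_m}$ is again an orthonormal basis since complex conjugation is unitary on $L^2(Q)$, and it is harmless to absorb the conjugate, or one may simply take $\psi_m=|Q|^{-1/2}e_m w$ and use that $\{\bar e_m\}$ is an ONB). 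Measurability of $m\mapsto\lan f,\psi_m\ran$ is automatic over a countable set.

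The only genuine content is the existence of the smooth cutoff $w_0$: one needs $w_0\in C^\infty(\R)$, $w_0\equiv 1$ on $[-1,1]$, $\supp w_0\subset[-1-\epsilon_0,1+\epsilon_0]$, which is a completely standard mollification of the indicator of $[-1,1]$ (or, if one wants to stay within the paper's toolkit, take $w_0^2=m(\cdot)$ with $m$ the one-dimensional bump from Section \ref{S5}, rescaled so its plateau contains $[-1,1]$ and its support sits inside $[-1-\epsilon_0,1+\epsilon_0]$, choosing $\delta<\epsilon_0$). I expect no real obstacle here; the one point to state carefully is that $w$ need not be a genuine Parseval partition of unity — we are exploiting that the lower bound in \eqref{orety} is only demanded for $f$ with $\supp f\subset[-1,1]^k$, where $w$ acts as the identity, so a single cube's exponential basis suffices and no summation over lattice shifts is required. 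If instead one wanted the decomposition to be exactly Parseval on all of $L^2([-1-\epsilon_0,1+\epsilon_0]^k)$ that would still follow, but it is not needed for the statement as written.
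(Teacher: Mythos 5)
Your proof is correct and rests on the same device as the paper's: multiply a system enjoying a Parseval identity by a smooth cutoff that equals $1$ on $[-1,1]^k$ and is supported in $[-1-\epsilon_0,1+\epsilon_0]^k$, so that for $f$ supported in $[-1,1]^k$ the cutoff disappears from the inner products and Parseval gives the stated equality. The only difference is that the paper multiplies an arbitrary continuous Parseval frame of $L^2(\R^k)$ by the cutoff (keeping the parameter space $(X,\nu)$ general, which its subsequent examples exploit), whereas you instantiate the system as the Fourier orthonormal basis of the enlarged cube with $X=\Z^k$; both choices satisfy the lemma as stated.
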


\begin{proof}
Take any system which is continuous Parseval frames in $L^2(\R^k)$, i.e.
for all $f\in L^2(\R^k)$  we have
\[
\int_{X} |\lan f,\psi_t \ran_{L^2(\R^k)}|^2 d\nu (t)=\|  f\|_{L^2(\R^k)}^2 
\]
Next take  a smooth function $\vp$ on $\R^k$ such that
\begin{equation}\label{multip}
\vp(x)= \begin{cases} 1 & x\in [-1,1]^k,
\\
0 & x\notin [-1-\epsilon_0,1+\epsilon_0]^k.
\end{cases}
\end{equation}
It is easy to check that $\psi_t \vp$ satisfies both conclusions of the lemma.\end{proof}

We present three examples of Parseval frames in $L^2(\R^k)$ for which Lemma \ref{localP} can be applied.

\begin{enumerate}
\item Let $E=\{0,1\}^k\setminus \{0\}$ be the non-zero vertices of the unit cube $[0,1]^k$. 
Let $\{\psi^{\mathbf e}_{j,k}: e \in E, j\in \Z, k\in \Z^k\}$ be a multivariate wavelet basis of $L^2(\R^k)$, see \cite[Proposition 5.2]{Wo}. Then, the wavelet basis is a continuous Parseval frame parameterized by $X=E \times \Z \times \Z^k$ equpped with counting measure.

\item Let $\psi\in L^2(\R^k)$ has norm one $\|\psi\|_2=1$. Then a continuous Gabor system
\[
\psi_{(t,s)}(x)=e^{2\pi i t\cdot x} \psi(x-s),\quad (t,s)\in X=\R^k\times \R^k
\]
is a continuous Parseval frame parameterized by $X$ equipped with the Lebesgue measure \cite[Corollary 11.1.4]{Chr}.

\item Let $\psi_{(x,t)}$, $(x,t)\in X=\R^k \times((0,1)\cup \{\infty\})$, be an admissible continuous wavelet introduced by Rauhut and Ullrich 
\cite[Definition 2.1]{Rauhut}.
Then for any $f\in L^2(\R^k)$ we have
\[
\int_{\R^k} \bigg( |\lan f,\psi_{(x,\infty)}\ran_{L^2(\R^k)}|^2 dx + \int_0^1 |\lan f,\psi_{(x,t)}\ran_{L^2(\R^k)}|^2 \frac{dt}{t^{k+1}} \bigg) dx=\|f\|_{L^2(\R^k)}^2.
\] 
\end{enumerate}

The concept of a local Parseval frame can be transferred to the sphere. Let 
\[
\Psi_{d-1} :[0,\pi]^{d-2}\times [0,2\pi] \to \mathbb S^{d-1}
\]
be the standard spherical coordinates given by the recurrence formula \eqref{scor}.
Fix a symmetric interior patch $\Omega$ of the form 
\[
\Omega=\Psi_{d-1}(\Theta)
\qquad\text{where } \Theta=
([\th^{d-1}_{1},\th^{d-1}_{2}]\times \cdots \times [\th^{2}_{1},\th^{2}_{2}]\times [\th^{1}_{1},\th^{1}_{2}]),
\]
where $0<\th_1^j<\th_2^j<2\pi$ for $j=1$, and $0<\th_1^j<\th_2^j<\pi$, $\th_2^j=\pi-\th_1^j$ for $j=2,\ldots,d-1$. For sufficiently small $\delta>0$, define enlargement of $\Omega$ by
\begin{equation}\label{jet}
\Omega_\delta=\Psi_{d-1}(\Theta_\delta), \qquad\text{where }\Theta_\delta:=[\th^{d-1}_{1}-\delta,\th^{d-1}_{2}+\delta]\times \cdots \times [\th^{1}_{1}-\delta,\th^{1}_{2}+\delta]).
\end{equation}

\begin{lemma}\label{localPP}
There is  a collection $\phi_t$, $t\in X$, of functions in $L^2(\Sp^{d-1})$ such that:
\begin{itemize}
\item
for all $t\in X$
\[
\supp \phi_t \subset \Omega_\delta,
\]
\item
for all $f\in L^2(\Sp^{d-1})$ with $\supp f\subset \Omega$ we have
\[
\int_X |\lan f,\phi_t \ran_{L^2(\Sp^{d-1})}|^2 d\nu (t)=\|  f\|_{L^2(\Sp^{d-1})}^2.
\]
\end{itemize}
\end{lemma}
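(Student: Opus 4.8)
The plan is to transfer a local continuous Parseval frame on $\R^{d-1}$, which exists by Lemma \ref{localP}, to the sphere via the spherical coordinate chart $\Psi_{d-1}$, correcting for the distortion introduced by the (non-isometric) change of variables. First I would fix a bounded open box $B = \prod_{j} (\th^j_1-\delta, \th^j_2+\delta)$ in parameter space containing the closure of the box $\Theta$ defining $\Omega$; after an affine rescaling we may assume $\Theta$ sits inside $[-1,1]^{d-1}$ and $\Theta_\delta$ inside $[-1-\epsilon_0,1+\epsilon_0]^{d-1}$ for a suitable small $\epsilon_0$ depending on $\delta$. Apply Lemma \ref{localP} on $\R^{d-1}$ to obtain $\psi_t$, $t\in X$, supported in $[-1-\epsilon_0,1+\epsilon_0]^{d-1}$ and reproducing the $L^2$ norm of every $f$ supported in $[-1,1]^{d-1}$.

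The key step is the change-of-variables computation. Let $J(x) = |\det D\Psi_{d-1}(x)|$ be the Jacobian of the spherical coordinate map; on the precompact set $\Theta_\delta$ (whose closure avoids the coordinate singularities, since all the $\th^j_i$ and their $\delta$-perturbations lie strictly inside the relevant intervals) the function $J$ is smooth, bounded, and bounded away from zero. For $g \in L^2(\Sp^{d-1})$ supported in $\Omega_\delta$, the pullback $g\circ \Psi_{d-1}$ lies in $L^2$ of parameter space with the weight $J$, and $\|g\|^2_{L^2(\Sp^{d-1})} = \int (g\circ\Psi_{d-1})(x)|^2 J(x)\,dx$ (here I abuse notation writing $g\circ\Psi_{d-1}$ after the affine rescaling). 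So define, for $t\in X$,
\[
\phi_t(\xi) = J(\Psi_{d-1}^{-1}(\xi))^{-1/2}\,\psi_t(\Psi_{d-1}^{-1}(\xi)) \qquad \text{for }\xi \in \Omega_\delta,
\]
and $\phi_t \equiv 0$ off $\Omega_\delta$. The first bullet is immediate since $\supp\psi_t$ maps into $\Theta_\delta$, hence $\supp\phi_t\subset\Omega_\delta$. For the second bullet, given $f$ supported in $\Omega$, set $\tilde f(x) = (f\circ\Psi_{d-1})(x)\,J(x)^{1/2}$, a function in $L^2(\R^{d-1})$ supported in $\Theta\subset[-1,1]^{d-1}$. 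A direct substitution gives $\langle f,\phi_t\rangle_{L^2(\Sp^{d-1})} = \int f(\xi)\overline{\phi_t(\xi)}\,d\sigma(\xi) = \int \tilde f(x)\overline{\psi_t(x)}\,dx = \langle \tilde f,\psi_t\rangle_{L^2(\R^{d-1})}$, and also $\|\tilde f\|^2_{L^2(\R^{d-1})} = \|f\|^2_{L^2(\Sp^{d-1})}$; so the Parseval identity for $\{\psi_t\}$ applied to $\tilde f$ yields exactly the claimed identity, with the same measure $\nu$ on the same index set $X$. Measurability of $t\mapsto\langle f,\phi_t\rangle$ is inherited from that of $t\mapsto\langle\tilde f,\psi_t\rangle$.

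The only genuine point requiring care—and the step I would expect to be the main obstacle—is verifying that $J$ is smooth and uniformly bounded above and below on $\Theta_\delta$, i.e. that $\Theta_\delta$ stays clear of the degeneracies of spherical coordinates. This is where the hypothesis that $\Omega$ is a \emph{symmetric interior patch} (with $0<\th^j_1<\th^j_2<\pi$ strictly, and the polar angle ranges bounded away from $0$ and $\pi$) is used: for $\delta$ small enough the box $\Theta_\delta$ has closure contained in the region where $\Psi_{d-1}$ is a diffeomorphism with non-vanishing Jacobian, so $J^{\pm 1/2}$ is a bona fide smooth bounded multiplier there. Everything else is the routine change-of-variables bookkeeping sketched above.
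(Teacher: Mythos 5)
Your proof is correct and follows essentially the same route as the paper: the paper's operators $\mathbf T$ and $\mathbf T_Y$ are exactly your Jacobian-normalized pullback along $\Psi_{d-1}$ composed with an affine rescaling, applied to the frame from Lemma \ref{localP}. (Only note that the containment you want, and in fact use later, is that the rescaled cube $[-1-\epsilon_0,1+\epsilon_0]^{d-1}$ maps \emph{into} $\Theta_\delta$, i.e.\ $Y([-1-\epsilon_0,1+\epsilon_0]^{d-1})\subset\Theta_\delta$ with $Y([-1,1]^{d-1})=\Theta$, not the reverse inclusion stated in your opening sentence.)
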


\begin{proof}
 We use  approach from \cite{BD}, where the localized wavelet system  is transferred to the sphere  via the spherical coordinates. Consider the change of variables operator \cite[Section 6.2]{BD}
\[
\mathbf T: L^2([0,\pi]^{d-2}\times [0,2\pi]) \to L^2(\mathbb S^{d-1})
\]
given by
\[
\mathbf T(\psi)(u) =  \frac{\psi( {\Psi_{d-1}}^{-1}(u))}{\sqrt{J_{d-1}( \Psi_{d-1}^{-1}(u))}},\qquad u\in \Sp^{d-1},
\]
where $J_{d-1}$ is the Jacobian of $\Psi_{d-1}$
\[
J_{d-1}(\theta_{d-1},\theta_{d-2},\ldots,\theta_{1})=|\sin^{d-2} \theta_{d-1} \sin^{d-3} \theta_{d-2} \cdots \sin \theta_{2}|.
\]
Since the set where $\Psi_{d-1}$ is not $1$-$1$ has measure zero, by the change of variables formula, $\mathbf T$ is an isometric isomorphism.

Let $Y: \R^{d-1} \to \R^{d-1}$ be an affine transformation such that for sufficient small $\epsilon_0$
\[
Y([-1,1]^{d-1})=\Theta,\qquad Y([-1-\epsilon_0,1+\epsilon_0]^{d-1})\subset \Theta_\delta
\]
In a similar way we define  the change of variables operator ${\mathbf T_Y}$
which is an isometry
\[
 L^2([-1-\epsilon_0,1+\epsilon_0]^{d-1}) \xrightarrow{\mathbf T_Y}  L^2(\Theta_\delta)
 \]
We  transfer a local Parseval frame $\psi_t$, $t\in X$ from Lemma \ref{localP} to the sphere by isometric isomorphisms ${\mathbf T_Y}$ and $\mathbf T$
\[
 L^2([-1-\epsilon_0,1+\epsilon_0]^{d-1}) \xrightarrow{\mathbf T_Y}  L^2(\Theta_\delta)\subset L^2([0,\pi]^{d-2}\times [0,2\pi]) \xrightarrow{\mathbf T}   L^2(\mathbb S^{d-1}).
 \]
Namely, we let $\phi_t={\mathbf T}{\mathbf T_Y}\psi_t$. Then the conclusion follows from Lemma \ref{localP} since any $f\in L^2(\Sp^{d-1})$ with $\supp f\subset \Omega$ is of the form $f={\mathbf T}{\mathbf T_Y}g$ for some $g\in L^2(\R^{d-1})$ with $\supp g\subset [-1,1]^{d-1}$.
\end{proof}

\begin{theorem} 
Let $\{\phi_t\}_{t\in X}$ be a local continuous Parseval frame as in Lemma \ref{localPP}. Then, there exists a Hestenes operator $P$, which is an orthogonal projection localized on $\Omega$, such that the family
$\{ T_{b^{-1}}P\phi_t\}_{(b,t)\in SO(d)\times X}$ is a continuous Parseval frame over $( SO(d) \times X, \mu_d \times \nu)$ for $L^2(\Sp^{d-1})$.
\end{theorem}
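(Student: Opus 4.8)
The plan is to reduce the assertion to three ingredients already in hand: the existence of a Hestenes orthogonal projection $P$, localized on $\Omega$, that satisfies the Marcinkiewicz averaging identity $\calS(P)f=c(P)f$; the extension of that identity from $C(\Sp^{d-1})$ to $L^2(\Sp^{d-1})$; and the reproducing property of the local frame $\{\phi_t\}$ from Lemma~\ref{localPP}. So first I would fix $P$: applying the construction of Theorem~\ref{mainth} to a symmetric interior patch slightly smaller than $\Omega$ (shrink each interval by $2\delta'$, for $\delta'>0$ small enough that the $\delta'$-enlargement of the shrunken patch is still contained in $\Omega$), one obtains a Hestenes operator $P$ that is an orthogonal projection on $L^2(\Sp^{d-1})$, is localized on $\Omega$, and satisfies \eqref{e3}. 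By Theorem~\ref{mainth} together with Propositions~\ref{apropo} and \ref{fs} (note $L^2(\Sp^{d-1})$ is among the spaces listed there), the identity $\calS(P)f=c(P)f$ then holds for all $f\in L^2(\Sp^{d-1})$, the average being a Bochner integral. Since $P$ is localized on $\Omega$, each $P\phi_t$ is supported in $\Omega$, so $T_{b^{-1}}P\phi_t\in L^2(\Sp^{d-1})$ for all $(b,t)$.

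Next I would carry out the core computation. Fix $f\in L^2(\Sp^{d-1})$. Since every rotation acts unitarily on $L^2(\Sp^{d-1})$, so that $(T_{b^{-1}})^*=T_b$, and since $P^*=P$, we have $\lan f,T_{b^{-1}}P\phi_t\ran=\lan PT_bf,\phi_t\ran$. Because $P$ is localized on $\Omega$, the function $PT_bf$ is supported in $\Omega$, so Lemma~\ref{localPP} gives $\int_X|\lan PT_bf,\phi_t\ran|^2\,d\nu(t)=\|PT_bf\|^2_{L^2(\Sp^{d-1})}$ for each $b\in SO(d)$. Using $P^*P=P$ and unitarity once more, $\|PT_bf\|^2=\lan PT_bf,T_bf\ran=\lan T_{b^{-1}}PT_bf,f\ran$; integrating over $SO(d)$, moving the scalar $\lan\cdot,f\ran$ through the Bochner integral, and invoking the inversion invariance of the Haar measure on the compact group $SO(d)$, one gets
\[
\int_{SO(d)}\int_X|\lan f,T_{b^{-1}}P\phi_t\ran|^2\,d\nu(t)\,d\mu_d(b)=\Big\lan\int_{SO(d)}T_bPT_{b^{-1}}f\,d\mu_d(b),\,f\Big\ran=c(P)\,\|f\|^2_{L^2(\Sp^{d-1})},
\]
where interchanging the two integrals is legitimate by Tonelli's theorem (the integrand is nonnegative). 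Joint $\mu_d\times\nu$-measurability of $(b,t)\mapsto\lan f,T_{b^{-1}}P\phi_t\ran$ follows from its continuity in $b$ (continuity of $b\mapsto T_bf$ in $L^2$, boundedness of $P$) and its $\nu$-measurability in $t$ for each fixed $b$. This shows that $\{T_{b^{-1}}P\phi_t\}_{(b,t)\in SO(d)\times X}$ is a continuous tight frame for $L^2(\Sp^{d-1})$ with frame bound $c(P)>0$; after the obvious normalization — rescaling $\phi_t$ by $c(P)^{-1/2}$, equivalently replacing $\nu$ by $c(P)^{-1}\nu$ — it becomes a continuous Parseval frame.

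The genuinely delicate points, rather than the routine adjoint/Tonelli/measurability steps, are the following. First, one must produce $P$ \emph{localized on $\Omega$ itself} — not merely on an enlargement $\Omega_\delta$ — with its compact localization set contained in the very patch $\Omega$ on which $\{\phi_t\}$ reproduces; this is precisely what licenses the application of Lemma~\ref{localPP} to $PT_bf$, and it is the reason for shrinking the interior patch fed into Theorem~\ref{mainth}. Second, one must upgrade the reproducing identity $\calS(P)f=c(P)f$ from $f\in C(\Sp^{d-1})$ (all that Theorem~\ref{mainth} directly delivers) to $f\in L^2(\Sp^{d-1})$, which is exactly the content of Proposition~\ref{apropo}. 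Third, there is the bookkeeping of the constant $c(P)$: an orthogonal projection localized on a proper subset of $\Sp^{d-1}$ necessarily has $c(P)<1$, so the normalization is unavoidable and should be recorded explicitly.
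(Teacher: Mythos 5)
Your proposal is correct and follows essentially the same route as the paper's proof: apply Theorem \ref{mainth} to a shrunk symmetric patch so that $P$ is an orthogonal projection localized on $\Omega$, extend the averaging identity to $L^2(\Sp^{d-1})$ via Proposition \ref{apropo}, and combine $\lan f, T_{b^{-1}}P\phi_t\ran=\lan PT_bf,\phi_t\ran$ with Lemma \ref{localPP}, unitarity of rotations, and the invariance of the Haar measure to get the frame identity. Your closing remark on the constant is apt: the paper's own computation likewise ends with the frame bound $c(P)$, so the ``Parseval'' conclusion holds only up to the normalization (rescaling $\phi_t$ or $\nu$ by $c(P)$) that you make explicit.
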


\begin{proof}
We apply Theorem \ref{mainth} for $k=d-1$ and for a shrunk symmetric patch 
\[
\Theta_{-\delta}=[\th^{d-1}_{1}+\delta,\th^{d-1}_{2}-\delta]\times \cdots \times [\th^{1}_{1}+\delta,\th^{1}_{2}-\delta]
\]
for sufficiently small $\delta>0$. This yields a Hestenes operator $P$, which is an orthogonal projection localized on $\Omega$.
Moreover, by Proposition \ref{apropo} applied for $P$,  for any $f\in L^2(\Sp^{d-1})$ we have
\[
\begin{aligned}
 c(P) \|f\|^2= \int_{SO(d)}  \lan T_b\circ P\circ T_{b^{-1}}(f), f \ran d\mu_d(b)
& =
 \int_{SO(d)}  \lan  P T_{b^{-1}}(f), PT_{b^{-1}} f \ran d\mu_d(b) \\
 & = \int_{SO(d)}  \|  T_b \circ P\circ T_{b^{-1}}(f)\|^2 d\mu_d(b).
\end{aligned}
\]
By Lemma \ref{localPP}
\[
\int_X |\lan   f, T_{b^{-1}} P\phi_t \ran|^2 d\nu (t)
=
\int_X |\lan P T_b f,\phi_t \ran|^2 d\nu (t)=\| P T_b f\|^2 =\| T_{b^{-1}} P T_b f\|^2.
\]
Integrating the above over $SO(d)$ yields
\[
\int_{SO(d)} \int_X |\lan   f, T_{b^{-1}} P\phi_t \ran|^2 d\nu (t) d\mu_d(b) = c(P) \|f\|^2.
\qedhere
\]
\end{proof}

\bibliographystyle{amsplain}

\begin{thebibliography}{99}

\bibitem{Antoine}
J.-P. Antoine,  P. Vandergheynst, {\it 
Wavelets on the n-sphere and related manifolds.}
J. Math. Phys. 39 (1998), no. 8, 3987--4008. 

\bibitem{AWW} P. Auscher, G. Weiss, M. V. Wickerhauser,
{\it Local sine and cosine bases of Coifman and Meyer and the construction of smooth wavelets.} Wavelets, 237--256,
Wavelet Anal. Appl., 2, Academic Press, Boston, MA, 1992.




\bibitem{Bownik} M. Bownik, {\it Continuous frames and the Kadison-Singer problem.} Coherent states and their applications, 63--88, Springer Proc. Phys., 205, Springer, Cham, 2018.

\bibitem{BD}
M. Bownik, K. Dziedziul,
{\it Smooth orthogonal projections on sphere}, Const. Approx. {\bf 41} (2015), 23--48.


\bibitem{BDK} M. Bownik, K. Dziedziul, A. Kamont, {\it
    Smooth orthogonal projections on Riemannian manifold}, 2018, 
Potential Anal. 54 (2021), no. 1, 41--94.  
    
\bibitem{BDK2} M. Bownik, K. Dziedziul, A. Kamont, {\it  Parseval wavelet frames on Riemannian manifold}, J. Geom. Anal. (to appear). 

\bibitem{Chr}
O. Christensen, {\it An introduction to frames and Riesz bases.} Second edition. Applied and Numerical Harmonic Analysis. Birkh\"auser/Springer, 2016.

\bibitem{CM} R. Coifman, Y. Meyer,
{\it Remarques sur l'analyse de Fourier \`a fen\^ etre.}
C. R. Acad. Sci. Paris S\' er. I Math. {\bf 312} (1991), no. 3, 259--261.



\bibitem{Dahlke}
S. Dahlke, F. De Mari, E. De Vito, M. Hansen, M. Hasannasab, M. Quellmalz, G. Steidl, G. Teschke 
{\it Continuous Wavelet Frames on the Sphere: The Group-Theoretic Approach Revisited},   Appl. Comput. Harmon. Anal. {\bf 56} (2022), 123--149. 




\bibitem{DaiXu}
F. Dai, Y. Xu,
{\it Approximation Theory and Harmonic Analysis on Spheres and Balls.}
Springer Monographs in Mathematics. Springer, New York, 2013.

\bibitem{Diestel}
J. Diestel, J. J. Uhl, Vector measures. With a foreword by B. J. Pettis. Mathematical Surveys, No. 15. American Mathematical Society, Providence, R.I., 1977.



\bibitem{Folland}
G. Folland,  A course in abstract harmonic analysis. Second edition. Textbooks in Mathematics. CRC Press, Boca Raton, FL, 2016. 

\bibitem{Fornasier} 
M. Fornasier,  H. Rauhut, {\it 
Continuous frames, function spaces, and the discretization problem.}
J. Fourier Anal. Appl. 11 (2005), no. 3, 245--287. 

\bibitem{Freeden} 
W. Freeden, U. Windheuser, {\it Spherical wavelet transform and its discretization}, Adv. Comput. Math. 5 (1) (1996) 51--94.

\bibitem{Freeden2}
W. Freeden, U. Windheuser, {\it Combined spherical harmonic and wavelet expansion---a future concept in Earth's gravitational determination}, Appl. Comput. Harmon. Anal. 4 (1) (1997) 1--37.

\bibitem{FS} D. Freeman, D. Speegle, 
{\it The discretization problem for continuous frames},
Adv. Math. {\bf 345} (2019), 784--813. 


\bibitem{Geller} D. Geller, A. Mayeli, {\it Continuous wavelets on compact manifolds.} 
Math. Z. 262 (2009), no. 4, 895--927. 



\bibitem{HW} E. Hern\' andez, G. Weiss,
{\it A first course on wavelets.}
Studies in Advanced Mathematics. CRC Press, Boca Raton, FL, 1996.


\bibitem{Hol} M. Holschneider, {\it Continuous wavelet transforms on the sphere}, J. Math. Phys. 37 (8) (1996) 4156--4165.

\bibitem{Iglewska}
I. Iglewska-Nowak, {\it Continuous wavelet transforms on n-dimensional spheres}, Appl. Comput. Harmon. Anal. 39 (2) (2015)
248--276. 

 
 


 
\bibitem{Rauhut} 
H. Rauhut,  T. Ullrich,  {\it Generalized coorbit space theory and inhomogeneous function spaces of Besov-Lizorkin-Triebel type},
 J. Funct. Anal. 260 (2011), no. 11, 3299--3362. 
 
\bibitem{Rudin} 
  W. Rudin, {\it Functional Analysis}, McGraw Hill, 1991.
  
\bibitem{Sk0}
L. Skrzypczak, 
{\it Atomic decompositions on manifolds with bounded geometry}.
Forum Math. {\bf 10} (1998), no. 1, 19--38. 

  

\bibitem{Wo}
P.~Wojtaszczyk, {\it A mathematical introduction to wavelets}.
Cambridge University Press, Cambridge, 1997.

\bibitem{Z}
A. Zygmund, {\it Trigonometric series. Vol. I, II.}  Third edition. With a foreword by Robert A. Fefferman. Cambridge Mathematical Library. Cambridge University Press, Cambridge, 2002.


\end{thebibliography}

\end{document}